\DeclarePairedDelimiter\floor{\lfloor}{\rfloor}
\newcommand{\la}{\left \langle}
\newcommand{\ra}{\right\rangle}
\newcommand{\norm}[1]{\left\lVert #1 \right\rVert}
\newtheorem{theorem}{Theorem}[section]
\newtheorem{corollary}[theorem]{Corollary}
\newtheorem{lemma}[theorem]{Lemma}
\theoremstyle{definition}
\newtheorem{assumption}[theorem]{Assumption}
\theoremstyle{remark}
\newtheorem{remark}[theorem]{Remark}
\numberwithin{equation}{section}
\let\OLDthebibliography\thebibliography
\renewcommand\thebibliography[1]{
  \OLDthebibliography{#1}
  \setlength{\parskip}{0pt}
  \setlength{\itemsep}{0pt plus 0.3ex}
}
\title{Mean Field Analysis of Neural Networks: A Law of Large Numbers}
\author{Justin Sirignano\footnote{Department of Industrial \& Systems Engineering, University of Illinois at Urbana Champaign, Urbana, E-mail: jasirign@illinois.edu} \phantom{.}  and Konstantinos Spiliopoulos\footnote{Department of Mathematics and Statistics, Boston University, Boston, E-mail: kspiliop@math.bu.edu}
\thanks{K.S. was partially supported by the National Science Foundation (DMS 1550918)}\\
}
\date{\today}
\begin{document}

\maketitle

\begin{abstract}
Machine learning, and in particular neural network models, have revolutionized fields such as image, text, and speech recognition. Today, many important real-world applications in these areas are driven by neural networks. There are also growing applications in engineering, robotics, medicine, and finance. Despite their immense success in practice, there is limited mathematical understanding of neural networks. This paper illustrates how neural networks can be studied via stochastic analysis, and develops approaches for addressing some of the technical challenges which arise. We analyze one-layer neural networks in the asymptotic regime of simultaneously (A) large network sizes and (B) large numbers of stochastic gradient descent training iterations. We rigorously prove that the empirical distribution of the neural network parameters converges to the solution of a nonlinear partial differential equation. This result can be considered a law of large numbers for neural networks. In addition, a consequence of our analysis is that the trained parameters of the neural network asymptotically become independent, a property which is commonly called ``propagation of chaos".
\end{abstract}

\section{Introduction}

Neural networks have achieved immense practical success over the past decade. Neural networks are nonlinear statistical models whose parameters are estimated from data using stochastic gradient descent. They have been employed as critical components of many important technologies in a variety of industries. This practical success has sparked significant interest in their mathematical analysis. Currently, there is limited mathematical understanding of neural networks. This paper analyzes the asymptotic behavior of neural networks, rigorously proving that the empirical distribution of their parameters converges to the solution of a nonlinear partial differential equation (PDE).

Neural network models have revolutionized fields such as image, text, and speech recognition. They are actively used in a variety of applications. In image recognition, neural networks are able to accurately identify and recognize objects in images using only the raw pixels. Neural networks are used for image recognition in applications such as self-driving cars, image searches on search engines such as Google, and facial recognition for security systems (see \cite{LeCun}, \cite{Goodfellow}, \cite{DriverlessCar}, and \cite{FacialRecognition}). In speech recognition, neural networks are used to develop computer systems that automatically understand human speech (see \cite{LeCun}, \cite{DeepVoice}, \cite{GoogleDuplex}, and \cite{SpeechRecognition3}). Applications include voice control of certain systems in vehicles, transcription (automatically converting human speech to written text), interactive voice response for customer service, and spoken commands for smartphones. In text recognition, neural networks are used to automatically translate text from one language (e.g., English) to another language (e.g., Italian); see \cite{GoogleMachineTranslation} and \cite{MachineTranslation2}. They have also been used for automatically generating summaries of long documents; see \cite{DocumentSummarization} and \cite{DocumentSummarization2}.

In addition, there is growing interest in applying neural networks to engineering, robotics, medicine, and finance. Neural networks are being used in reduced-form models of the Navier-Stokes equation in turbulent conditions (see \cite{Ling1} and \cite{Ling2}). \cite{Robotics1}, \cite{Robotics2}, and \cite{Robotics3} describe applications in robotics. Neural networks have been used to identify cancer \cite{NatureMedicine1} and to model protein folding \cite{NatureMedicine2}. In finance, neural networks have been used to model loan default and prepayment risk \cite{Finance1} and to model high frequency financial data \cite{Finance2}. Neural networks have also been used to solve high-dimensional PDEs in financial applications \cite{Finance3}.

Due to the impact that neural networks have had on practical applications, there is a significant interest in better understanding their mathematical properties. However, the existing literature is relatively limited, with only a few recent papers such as \cite{Telgarsky1}, \cite{Mallat}, and \cite{Telgarsky2}. There also exist classical results regarding the approximation power of neural networks \cite{Barron}, \cite{Hornik1}, and \cite{Hornik2}.

Our result characterizes neural networks with a single hidden layer in the asymptotic regime of large network sizes and large numbers of stochastic gradient descent iterations. We rigorously prove that the empirical distribution of the neural network parameters will weakly converge to a distribution. This distribution satisfies a nonlinear partial differential equation. The proof relies upon weak convergence analysis for interacting particle systems. The result can be considered a ``law of large numbers" for neural networks when both the network size and the number of stochastic gradient descent steps grow to infinity.

Recently, \cite{Mattingly} rigorously established a weak convergence result for a class of machine learning algorithms.  Weak convergence analysis has been widely used in other fields (for example, see \cite{TypicalDefaults}, \cite{LargePortfolio}, \cite{DaiPra1}, \cite{DaiPra2}, \cite{DaiPra3}, \cite{Capponi}, and \cite{Hambly} for a non-exhaustive list). In fact, mean field analysis has been actively used for many years to study biological neural networks and physical systems of interacting particles; see for example \cite{Delarue}, \cite{Inglis}, \cite{Moynot},  \cite{Sompolinsky}, and the references therein.

Upon completion of this work, we became aware of the very recent work of \cite{Montanari} where a related PDE limit result for neural networks is derived; see also the recent work \cite{Rotskoff_VandenEijnden2018}. Our convergence analysis, setup, and assumptions are different. In \cite{Montanari}, it is assumed  that the gradient of the neural network is a priori globally Lipschitz and bounded and under this assumption a similar PDE result as well as certain rates of convergence are established. In our work, we do not assume that the gradient of the neural network is a priori globally Lipschitz or bounded. Often, neural network models (and their gradients) are not globally Lipschitz and not bounded. In this paper, we assume that the data come from a distribution that has its first moments bounded and that the initialization is done according to distributions with certain moment bounds.  Based on this assumption, we rigorously prove relative compactness of the pre-limit measure valued process, identification of its limit, and uniqueness of the limit point in the appropriate space. Our method of proof leverages on weak convergence analysis in an appropriate Skorokhod space for measure-valued processes (similar to the approaches in \cite{Mattingly} and \cite{TypicalDefaults}). In particular, the relative compactness and uniqueness proof addresses the challenge of neural networks not being a priori globally Lipschitz nor globally bounded using the structure of the stochastic gradient descent algorithm.

Consider the one-layer neural network
\begin{eqnarray}
g_{\theta}^N(x) = \frac{1}{N} \sum_{i=1}^N c^i \sigma( w^i \cdot x),\label{Eq:NN}
\end{eqnarray}
where for every $i\in\{1,\cdots, N\}$, $c^i \in \mathbb{R}$ and $x, w^i \in \mathbb{R}^{d}$. For notational convenience we shall interpret $w^i \cdot x=\sum_{j=1}^{d}w^{i,j} x^{j}$ as the standard scalar inner product. The neural network model has parameters $\theta = (c^1, \ldots, c^N, w^1, \ldots, w^N)\in\mathbb{R}^{(1+d)N}$, which must be estimated from data.

The neural network (\ref{Eq:NN}) takes a linear function of the original data, applies an element-wise nonlinearity using the function $\sigma: \mathbb{R} \rightarrow \mathbb{R}$, and then takes another linear function to produce the output. The activation function $\sigma(\cdot)$ is a nonlinear function such as a sigmoid or tanh function. The quantity $\sigma( w^i \cdot x)$ is referred to as the $i$-th ``hidden unit", and the vector $\big{(} \sigma( w^1 \cdot x), \ldots, \sigma( w^N \cdot x) \big{)}$ is called the ``hidden layer". The number of units in the hidden layer is $N$.

The objective function is
\begin{eqnarray}
L^{N}(\theta) = \frac{1}{2}\mathbb{E}_{Y,X} [ ( Y - g_{\theta}^N(X) )^2 ],\label{Eq:ObjFunction}
\end{eqnarray}
where the data $(Y,X)$ is assumed to have a  joint distribution $\pi (dx,dy)$. We shall write $\mathcal{X},\mathcal{Y}$ for the state spaces of $X$ and $Y$, respectively. The parameters $\theta = (c^1, \ldots, c^N, w^1, \ldots, w^N)$ are estimated using stochastic gradient descent:
\begin{eqnarray}
c_{k+1}^i &=& c^i_k + \frac{\alpha}{N} (y_k - g_{\theta_k}^N(x_k) )  \sigma (w^i_k \cdot x_k), \notag \\
w^{i,j}_{k+1} &=& w^{i,j}_k + \frac{\alpha}{N} (y_k - g_{\theta_k}^N(x_k) )  c^i_k \sigma' (w^i_k \cdot x_k) x^{j}_k, \quad j=1,\cdots, d,\label{Eq:SGD}
\end{eqnarray}
where $\alpha$ is the learning rate and $(x_k, y_k) \sim \pi(dx,dy)$. Stochastic gradient descent minimizes (\ref{Eq:ObjFunction}) using a sequence of noisy (but unbiased) gradient descent steps $\nabla_{\theta} [ ( y_k - g_{\theta_k}^N(x_k) )^2 ]$. Note that typically $\nabla_{\theta}[  ( y - g_{\theta}^N(x) )^2]$ is not a priori globally Lipschitz nor globally bounded as a function of $\theta$. Stochastic gradient descent typically converges more rapidly than gradient descent for large datasets. For this reason, stochastic gradient descent is widely used in machine learning.

Define the empirical measure
\begin{eqnarray}
\nu^N_k(dc, dw) = \frac{1}{N} \sum_{i=1}^N \delta_{c_k^i, w_k^i}(dc, dw).
\end{eqnarray}

The neural network's output can be re-written in terms of the empirical measure:
\begin{eqnarray}
g_{\theta_k}^N(x)  = \la c \sigma(w \cdot x),  \nu^N_k \ra.
\end{eqnarray}
$\la f, h \ra$ denotes the inner product of $f$ and $h$. For example, $ \la c \sigma(w \cdot x),  \nu^N_k \ra = \int c \sigma(w \cdot x) \nu^N_k(dc, dw)  $.

The scaled empirical measure is
\begin{eqnarray}
\mu^N_t = \nu^N_{\floor*{N t} }.
\end{eqnarray}

At any time $t$, the scaled empirical measure $\mu^N_t$ is a random element of $D_{E}([0,T])=D([0,T];E)$\footnote{$D_S([0,T])$ is the set of maps from $[0,T]$ into $S$ which are right-continuous and which have left-hand limits.} with $E = \mathcal{M}(\mathbb{R}^{1+d})$. We study the convergence in distribution of $\mu^N_t$ in the Skorokhod space $D_E([0,T])$.

Our main results are stated below. Theorem \ref{TheoremLLN} (and the associated Remark \ref{R:ProbConvergence}) is a law of large numbers describing the distribution of the trained parameters when $N$ is large. Theorem \ref{TheoremChaos} describes the behavior of individual parameters when $N$ is large. Theorem \ref{TheoremChaos} is a ``propagation of chaos" result. Section \ref{Insights} presents several insights provided by these asymptotic results.

We shall work on a filtered probability space $(\Omega,\mathcal{F},\mathbb{P})$  on which all the random variables are defined. The probability space is equipped with a filtration $\mathfrak{F}_t$ that is right continuous and $\mathfrak{F}_0$ contains all $\mathbb{P}$-negligible sets.

At this point, let us recall the definition of chaoticity. Let $q$ be a probability measure on a Polish space $\mathcal{Z}$ and, for $N\in\mathbb{N}$, let $\mathbb{Q}^{N}$ be a symmetric probability measure on the product space $\mathcal{Z}^{N}$. Then $(\mathbb{Q}^{N})_{N\in\mathbb{N}}$ is called $q-$chaotic if, for every $k\in\mathbb{N}$, the joint distribution law of the first $k$ marginals of $\mathbb{Q}^{N}$ converge weakly to the product measure $\otimes^{k}q$.

We impose the following assumption.
\begin{assumption} \label{A:Assumption1} We have that
\begin{itemize}
\item The activation function $\sigma\in C^{2}_{b}(\mathbb{R})$, i.e. $\sigma$ is twice continuously differentiable and bounded.
\item The sequence of data samples $(x_k, y_k)$ is i.i.d. from a probability distributed $\pi(dx,dy)$ such that $\mathbb{E}\parallel x_{k}\parallel^{4}+\mathbb{E}|y_{k}|^{4}$ is bounded.
\item The randomly initialized parameters $(c_0^i, w_0^i)$ are i.i.d. with a distribution $\bar \mu_0$ such that $\mathbb{E}[ \exp \big{(} q | c_0^i | \big{)} ] < C$ for some $0 < q < \infty$ and $\mathbb{E}[ \parallel w_0^i \parallel^4 ] < C$.

\end{itemize}
\end{assumption}

Notice that initial distributions on $c^{i}$ with compact support or exponential tails satisfy the condition on the moment generating function of $|c_0^i |$ in the assumption above. Under Assumption \ref{A:Assumption1}, the initial empirical measure satisfies $\mu_0^N \overset{d} \rightarrow \bar \mu_0$ as $N \rightarrow \infty$. In addition, due to our assumption on the distribution of the $(x_k, y_k)$ data and of the initialization $(c_0^i, w_0^i)_{i=1}^N$, the joint distribution of $ ( c^i_k, w^i_k)_{i=1}^N \in  ( \mathbb{R}^{1+d} )^{\otimes N}$ is exchangeable and, consequently, $\nu_k^N$ is a Markov chain in the space of probability measures on $E$.

\begin{theorem} \label{TheoremLLN}
 Assume Assumption \ref{A:Assumption1}. The scaled empirical measure $\mu^N_t$ converges in distribution to a limit measure $\bar \mu_t$ with values  in $D_E([0,T])$ as $N \rightarrow \infty$. For every $f\in C^{2}_{b}(\mathbb{R}^{1+d})$, $\bar \mu$ is the unique deterministic solution of the  measure evolution equation
\begin{eqnarray}
\la f, \bar \mu_t \ra  &=& \la f, \bar \mu_0 \ra + \int_0^t   \bigg{(} \int_{\mathcal{X}\times\mathcal{Y}}   \alpha \big{(} y -  \la c' \sigma(w'\cdot x),  \bar \mu_s \ra \big{)} \la \sigma(w \cdot x) \partial_c f, \bar \mu_s \ra   \pi(dx,dy)\bigg{)} ds \notag \\
& &+ \int_0^t \bigg{(} \int_{\mathcal{X}\times\mathcal{Y}}  \alpha \big{(} y -  \la c' \sigma(w' \cdot x), \bar  \mu_s \ra \big{)} \la c \sigma'(w \cdot x) x \cdot \nabla_w f, \bar \mu_s \ra \pi(dx, dy) \bigg{)}   ds \notag \\
&=& \la f, \bar \mu_0 \ra + \int_0^t   \bigg{(} \int_{\mathcal{X}\times\mathcal{Y}}   \alpha \big{(} y -  \la c' \sigma(w'\cdot x),  \bar \mu_s \ra \big{)} \la \nabla (c\sigma(w \cdot x)) \cdot \nabla f, \bar \mu_s \ra   \pi(dx,dy)\bigg{)} ds,
\label{EvolutionEquationIntroduction}
\end{eqnarray}
where $\nabla f=(\partial_{c}f,\nabla_{w}f)$.
\end{theorem}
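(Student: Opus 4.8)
The plan is to carry out the classical program for mean-field / propagation-of-chaos limits: (i) uniform-in-$N$ a priori moment estimates on the trained parameters, (ii) relative compactness of $\{\mu^N\}_{N\ge 1}$ in $D_E([0,T])$, (iii) identification of every limit point as a solution of \eqref{EvolutionEquationIntroduction}, and (iv) uniqueness of that solution, which upgrades subsequential convergence to convergence of the full sequence. For (i), the key structural fact is that the updates in \eqref{Eq:SGD} carry the prefactor $\alpha/N$ while the rescaled process $\mu^N_t=\nu^N_{\floor*{Nt}}$ runs for $\floor{Nt}$ steps, so the accumulated drift over $[0,t]$ is $O(t)$ rather than $O(N)$. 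Using this together with $\sigma,\sigma'\in C^2_b$, the data bound $\mathbb E\norm{x_k}^4+\mathbb E|y_k|^4<\infty$, and the initialization bounds of Assumption \ref{A:Assumption1}, I would run a discrete Gronwall argument on $\la|c|^p+\norm w^p,\nu^N_k\ra$ (and, exploiting the moment-generating-function hypothesis on $c_0^i$, on $\la e^{q|c|},\nu^N_k\ra$) to get $\sup_N\mathbb E\big[\sup_{k\le\floor{NT}}\la|c|^p+\norm w^p,\nu^N_k\ra\big]<\infty$. These bounds are precisely what replaces a global Lipschitz/boundedness assumption on $\nabla_\theta g^N_\theta$.

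For (ii), fix $f\in C^2_b(\mathbb R^{1+d})$, write $\la f,\nu^N_{k+1}\ra-\la f,\nu^N_k\ra=\frac1N\sum_i\big(f(c^i_{k+1},w^i_{k+1})-f(c^i_k,w^i_k)\big)$, Taylor-expand to second order, substitute \eqref{Eq:SGD}, and sum over $k\le\floor{Nt}$; this produces the decomposition $\la f,\mu^N_t\ra=\la f,\mu^N_0\ra+\int_0^t(\cdots)\,ds+M^{N,f}_t+R^{N,f}_t$, where the integrand is the drift of \eqref{EvolutionEquationIntroduction} evaluated at $\mu^N_s$, $M^{N,f}_t$ is a martingale whose increments are the first-order SGD increments minus their $\mathfrak F_k$-conditional expectations (so that replacing data sample averages by $\pi$-integrals is exactly this martingale term), and $R^{N,f}_t$ collects the second-order Taylor remainders. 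Since each SGD increment is $O(1/N)$, Step (i) gives $\mathbb E[(M^{N,f}_t)^2]=O(1/N)$ and $\mathbb E|R^{N,f}_t|=O(1/N)$. The compact containment condition follows from Step (i) via Markov's inequality, since $\{\mu:\la|c|^p+\norm w^p,\mu\ra\le C\}$ is relatively compact in $\mathcal M(\mathbb R^{1+d})$; combined with the decomposition above, a standard tightness criterion for measure-valued càdlàg processes (as used in \cite{Mattingly,TypicalDefaults}) yields relative compactness of $\{\mu^N\}_{N\ge1}$.

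For (iii), along any subsequence with $\mu^{N_\ell}\Rightarrow\bar\mu$ I would pass to the limit in the decomposition: $M^{N,f}\to 0$ and $R^{N,f}\to 0$ in $L^1$, $\mu^N_0\Rightarrow\bar\mu_0$ by Assumption \ref{A:Assumption1}, and the drift term converges once one checks that $\mu\mapsto\int_{\mathcal X\times\mathcal Y}\alpha\big(y-\la c'\sigma(w'\cdot x),\mu\ra\big)\la\nabla(c\sigma(w\cdot x))\cdot\nabla f,\mu\ra\,\pi(dx,dy)$ is continuous on the moment-bounded set in which the $\mu^N_s$ live — for which boundedness of $\sigma,\sigma'$, $\mathbb E\norm x^4<\infty$, and uniform integrability in the unbounded variable $c$ suffice. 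Hence $\bar\mu$ satisfies \eqref{EvolutionEquationIntroduction} for all $f\in C^2_b$. The step I expect to be the main obstacle is (iv): showing \eqref{EvolutionEquationIntroduction} has a \emph{unique} solution within the class of measure-valued paths carrying the Step-(i) moment bounds. The difficulty is that the drift is not globally Lipschitz in $\mu$ — the factor $c'$ in $\la c'\sigma(w'\cdot x),\mu_s\ra$ is unbounded and $x$ is only $L^4$ — so one cannot close a Gronwall estimate with a universal constant. The remedy is to take two solutions $\mu^1,\mu^2$ (both carrying those moment bounds), test against a sufficiently rich family of $f$ to realize a bounded-Lipschitz/Wasserstein-type distance $d(\mu^1_t,\mu^2_t)$, and bound its increment by the difference of the right-hand sides of \eqref{EvolutionEquationIntroduction}, using the propagated exponential/polynomial moments of $c$ and boundedness of $\sigma,\sigma'$ to produce a Gronwall constant that is uniform over the moment-bounded class (even though not over all of $\mathcal M(\mathbb R^{1+d})$). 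Uniqueness then promotes relative compactness to $\mu^N\Rightarrow\bar\mu$.
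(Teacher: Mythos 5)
Your overall architecture --- uniform-in-$N$ moment bounds, relative compactness in $D_E([0,T])$, identification of limit points via a drift-plus-martingale decomposition of $\la f,\mu^N_t\ra$ with $\mathbb{E}[(M^{N,f}_t)^2]=O(1/N)$, uniqueness, and finally Prokhorov --- is exactly the paper's program, and your steps (i)--(iii) match Sections \ref{RelativeCompactness} and \ref{Identification} in all essentials (the paper proves the a priori bounds only in the form $\sup_{N,\,k/N\le T}\mathbb{E}[|c^i_k|+\parallel w^i_k\parallel]\le C$ rather than with the supremum inside the expectation, but that weaker form suffices for compact containment and regularity, and your identification step via continuity of the drift functional on the moment-bounded set is the same idea as the paper's functional $F(\mu)$ argument).

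The place where you diverge, and where your plan as stated has a genuine gap, is step (iv). You propose to take two measure-valued solutions of \eqref{EvolutionEquationIntroduction}, metrize their difference by a bounded-Lipschitz/Wasserstein-type distance realized through test functions, and close a Gronwall estimate directly at the level of the evolution equation. The obstruction is that the nonlinearity enters through $\la c'\sigma(w'\cdot x),\mu_s\ra$, i.e. through the test function $(c',w')\mapsto c'\sigma(w'\cdot x)$, which is unbounded and whose Lipschitz constant in $w'$ grows linearly in $|c'|$ (and in $\parallel x\parallel$). A bounded-Lipschitz metric therefore cannot control the difference $\la c'\sigma(w'\cdot x),\mu^1_s-\mu^2_s\ra$ at all, and a $p$-Wasserstein metric controls it only up to a factor involving moments of $c'$ under the coupling --- so the ``Gronwall constant'' is not a constant uniform over the moment-bounded class, contrary to what you assert; it is measure- and path-dependent. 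The paper's resolution (Section \ref{Uniqueness}) is to first represent any solution of \eqref{EvolutionEquationIntroduction} as the law of the nonlinear random ODE \eqref{Eq:RandomODE}, define the map $H$ sending an input law $m$ to the law of the ODE driven by $m$, and prove $H$ is a contraction in the path-space Wasserstein metric $D_{T,4}$. There the Gronwall lemma is applied pathwise and produces the random factor $\exp\big(C_1 t(1+|c_0^1|)\big)$; this is integrated using precisely the exponential-moment hypothesis $\mathbb{E}[e^{q|c_0|}]<\infty$ of Assumption \ref{A:Assumption1} together with Cauchy--Schwarz, and the contraction only closes on a short interval $[0,T_0]$ with $C_2^{1/4}C_M^{1/8}T_0<1$ and $C_1 T_0<q$, after which one iterates over subintervals of length $T_0$. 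If you want to salvage your direct approach, you would need a weighted (e.g.\ $(1+|c|)$-weighted) transport metric and an argument that both solutions propagate exponential moments of $c$; at that point you have essentially rebuilt the paper's coupling argument. You should also note that the passage from a weak solution of \eqref{EvolutionEquationIntroduction} to the ODE representation (the assertion that any weak solution is a fixed point of $H$) is itself a step the paper uses and your direct route would have to either replicate or avoid.
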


\begin{remark}\label{R:ProbConvergence}
Since weak convergence to a constant implies convergence in probability, Theorem \ref{TheoremLLN} leads to the stronger result of convergence in probability
\begin{equation*}\label{E:mulimit}
  \lim_{N\to \infty}\mathbb{P}\left\{ d_{E}(\mu^N,\bar \mu)\ge \delta\right\} = 0
  \end{equation*}
for every $\delta>0$ and where $d_E$ is the metric for $D_E([0,T])$.
\end{remark}
\begin{corollary} \label{CorollaryLLN}
 Assume Assumption \ref{A:Assumption1}.  Suppose that $\bar \mu_0$ admits a density $p_0(c,w)$ and there exists a unique solution to the nonlinear partial differential equation
  \begin{eqnarray}
\frac{ \partial p(t, c, w) }{ \partial t}  &=& - \alpha \int_{\mathcal{X}\times\mathcal{Y}}   \bigg{(} \big{(} y -  \la c' \sigma(w' \cdot x), p(t,c', w') \ra \big{)} \frac{\partial}{\partial c} \big{[} \sigma(w \cdot x) p(t,c,w) \big{]} \bigg{)}\pi(dx,dy)  \notag \\
&-& \alpha \int_{\mathcal{X}\times\mathcal{Y}}   \bigg{(} \big{(} y -  \la c' \sigma(w' \cdot x),  p(t,c',w') \ra \big{)} x \cdot \nabla_{w} \big{[}  c \sigma'(w \cdot x)  p(t,c,w) \big{]} \bigg{)}\pi(dx,dy), \notag \\
p(0,c,w) &=& p_0(c,w), \notag
\label{fpPDE}
\end{eqnarray}
such that $p(t, c, w)$ vanishes as $|c|, \parallel w\parallel\rightarrow\infty$. Then, we have that the solution to the measure evolution equation (\ref{EvolutionEquationIntroduction}) is such that
\begin{eqnarray*}
\bar \mu_t(dc, dw) = p(t, c,w) dc dw. \notag
\end{eqnarray*}
\end{corollary}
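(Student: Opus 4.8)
The plan is to check directly that the measure $\bar\mu_t(dc,dw):=p(t,c,w)\,dc\,dw$ is a solution of the measure evolution equation (\ref{EvolutionEquationIntroduction}), and then to invoke the uniqueness of the solution of (\ref{EvolutionEquationIntroduction}) --- which is part of the proof of Theorem \ref{TheoremLLN} --- to conclude that the limit measure $\bar\mu_t$ must coincide with $p(t,c,w)\,dc\,dw$. So, once uniqueness is in hand, the corollary reduces to a single integration-by-parts computation, carried out under the hypotheses that $p$ is a classical solution of the PDE in the statement and vanishes at infinity.

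First I would fix $f\in C^2_b(\mathbb{R}^{1+d})$ and observe that, when $\bar\mu_t$ has density $p(t,\cdot,\cdot)$, every pairing appearing in (\ref{EvolutionEquationIntroduction}) and in the PDE is literally an integral against $p$; in particular $\la c'\sigma(w'\cdot x),\bar\mu_s\ra=\la c'\sigma(w'\cdot x),p(s,c',w')\ra$, so the prediction-error factor $y-\la c'\sigma(w'\cdot x),\bar\mu_s\ra$ is the same object in both equations. I would then multiply the PDE for $\partial_t p(s,c,w)$ by $f(c,w)$, integrate over $(c,w)\in\mathbb{R}^{1+d}$, and integrate in $s$ over $[0,t]$; because $p$ is $C^1$ in time, the left-hand side becomes $\int f\,p(t,\cdot)\,dc\,dw-\int f\,p_0\,dc\,dw=\la f,\bar\mu_t\ra-\la f,\bar\mu_0\ra$, using $p(0,\cdot)=p_0$ and $\bar\mu_0=p_0\,dc\,dw$. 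On the right-hand side, inside the $ds$ and $\pi(dx,dy)$ integrals, sit the two space integrals $\int f\,\partial_c[\sigma(w\cdot x)p]\,dc\,dw$ and $\int f\,x\cdot\nabla_w[c\sigma'(w\cdot x)p]\,dc\,dw$.

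Next I would integrate by parts, once in $c$ and once in each $w_j$. Since $\partial_c f$ and $\nabla_w f$ are bounded while $p$ --- hence $\sigma(w\cdot x)p$ and $c\sigma'(w\cdot x)p$, using that $\sigma,\sigma'$ are bounded --- vanishes as $|c|,\|w\|\to\infty$, all boundary terms drop and the two integrals become $-\la\sigma(w\cdot x)\partial_c f,\bar\mu_s\ra$ and $-\la c\sigma'(w\cdot x)\,x\cdot\nabla_w f,\bar\mu_s\ra$. These extra minus signs cancel the minus signs that precede the two PDE terms, so one recovers exactly the right-hand side of (\ref{EvolutionEquationIntroduction}); writing $\nabla f=(\partial_c f,\nabla_w f)$ and $\nabla(c\sigma(w\cdot x))=(\sigma(w\cdot x),\,c\sigma'(w\cdot x)x)$ produces the compact form of (\ref{EvolutionEquationIntroduction}). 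Hence $t\mapsto p(t,c,w)\,dc\,dw$ solves (\ref{EvolutionEquationIntroduction}), and uniqueness then gives $\bar\mu_t(dc,dw)=p(t,c,w)\,dc\,dw$. Along the way I would also need the Fubini interchanges of the $ds$, $dc\,dw$ and $\pi(dx,dy)$ integrations; these are justified using $\sigma,\sigma'\in C_b$, the fourth-moment bounds on $(x,y)$ in Assumption \ref{A:Assumption1}, and the finiteness of $\int(1+|c|+\|w\|)\,p(s,c,w)\,dc\,dw$ and of the integrals of the first derivatives of $p$, uniformly for $s\in[0,t]$.

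I expect the main obstacle to be precisely this last point: turning the formal integration by parts and the Fubini/dominated-convergence steps into rigorous statements, i.e.\ verifying that the boundary contributions at $|c|,\|w\|\to\infty$ genuinely vanish and that every integral is finite with enough uniformity in $s$. The hypothesis only says that a solution $p$ ``vanishing at infinity'' exists, so one must read into it the decay of $\nabla p$ needed to discard the boundary terms and to dominate the integrands; granting that regularity, the corollary is the short computation above, but without it the manipulation is not legitimate.
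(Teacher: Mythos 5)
Your proposal is correct and follows essentially the same route as the paper: a single integration by parts linking the PDE to the weak measure evolution equation (using the vanishing of $p$ at infinity to kill boundary terms), followed by an appeal to the uniqueness of solutions of (\ref{EvolutionEquationIntroduction}) established in Section \ref{Uniqueness}. Your added remarks on justifying the Fubini interchanges and the implicit decay of $\nabla p$ only make explicit what the paper's one-line proof leaves tacit.
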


\begin{remark}\label{R:GradientFlow}
Notice that by setting $\theta=(c,w)$, the partial differential equation for $p(t,\theta)$ in Corollary \ref{CorollaryLLN} can be written as
\begin{align}
\frac{ \partial p(t, \theta) }{ \partial t}  &=  -\alpha \textrm{div}_{\theta}\left(p(t,\theta)\nabla_{\theta}v(\theta,p(t,\cdot))\right) \notag \\
p(0,\theta) &= p_0(\theta), \label{GradientFlow}
\end{align}
where $\textrm{div}_{\theta}$ is the divergence operator with respect to the variable $\theta$ and $v(\theta,p(t,\cdot))$ is defined as
\begin{align}
v(\theta,p(t,\cdot))&=\int_{\mathcal{X}\times\mathcal{Y}}
\left( \left( y -  \int_{\mathbb{R}^{1+d}} c' \sigma(w' \cdot x) p(t,c', w') dc'dw' \right) c \sigma(w \cdot x)   \right)\pi(dx,dy).\nonumber
\end{align}

In addition, Theorem \ref{TheoremLLN} and Corollary \ref{CorollaryLLN} imply that the objective function $L^{N}(\theta)$ from (\ref{Eq:ObjFunction}) satisfies
\begin{align}
\lim_{N\rightarrow\infty}L^{N}(\theta_{\floor*{Nt}})&=\bar{L}(p(t,\cdot))
=\frac{1}{2}\int_{\mathcal{X}\times\mathcal{Y}}\left(y-\bar{g}(x,p(t,\cdot))\right)^{2}\pi(dx,dy),\quad \textrm{where,}\label{Eq:LimitObjectiveFcn}\\
\bar{g}(x,p(t,\cdot))&=\int_{\mathbb{R}^{1+d}} c\sigma(w\cdot x) p(t,c,w)dcdw.\nonumber
\end{align}

As it is well known in the literature, see for example \cite{Ambrosio2008,Carrillo2003,Jordan1998}, the PDE (\ref{GradientFlow}) is a gradient flow for the limiting objective function (\ref{Eq:LimitObjectiveFcn}) in the space of probability measures on $\mathbb{R}^{1+d}$ endowed with the Wasserstein metric. As it is also extensively discussed in \cite{Ambrosio2008}, but also  pointed out in the recent works by \cite{Chizat2018,Montanari,Szpruch2019}, this means that the trajectory's  $t\mapsto p(t,\cdot)$ goal is to minimize the limit objective function $\bar{L}(p)$ as defined by (\ref{Eq:LimitObjectiveFcn}). For more details on the related optimal transportation theory we refer the interested reader to Lemma 10.3.16 in \cite{Ambrosio2008} for general results, and to  Proposition 4.2 in \cite{Chizat2018} and to Corollary 3.3 in \cite{Szpruch2019} for a more specialized discussion in the special case of limiting problems of the type (\ref{GradientFlow})-(\ref{Eq:LimitObjectiveFcn}).
\end{remark}

In Theorem \ref{TheoremChaos} we prove that the neural network has the  ``propagation of chaos" property.
\begin{theorem} \label{TheoremChaos}
 Assume Assumption \ref{A:Assumption1}. Consider $T<\infty$ and let $t\in(0,T]$. Define the probability measure $\rho^N_t \in \mathcal{M} (\mathbb{R}^{(1+d)N})$ where
\begin{eqnarray*}
\rho^N_t(dx^1, \ldots, dx^N) = \mathbb{P}[ (c_{\floor*{N t}}^1, w_{\floor*{N t}}^1) \in dx^1, \ldots, (c_{\floor*{N t}}^N, w_{\floor*{N t}}^N) \in dx^N ].
\end{eqnarray*}
Then, the sequence of probability measures $\rho^N_\cdot$ is $\bar \mu_\cdot$-chaotic. That is, for $k\in\mathbb{N}$
\begin{eqnarray}
\lim_{N \rightarrow \infty } \la   f_1(x^1) \times \cdots \times f_k(x^k), \rho_\cdot^N(dx^1, \ldots, dx^N) \ra = \prod_{i=1}^k \la  f_i, \bar \mu_{\cdot} \ra, \phantom{....} \forall f_1, \ldots, f_k \in C^{2}_{b}(\mathbb{R}^{1+d}).
\label{PropChaos}
\end{eqnarray}

\end{theorem}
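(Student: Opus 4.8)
The plan is to obtain the propagation of chaos as a corollary of the law of large numbers already proved in Theorem \ref{TheoremLLN}, using the classical equivalence (going back to Sznitman) between chaoticity of an exchangeable family and convergence in distribution of its empirical measure to a deterministic limit. No new estimate on the stochastic gradient descent dynamics is needed beyond those underlying Theorem \ref{TheoremLLN}. (A direct coupling of $(c^i_k,w^i_k)$ with i.i.d.\ copies driven by the mean-field dynamics would also work, but is more laborious and is unnecessary here.)

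First I would fix $t\in(0,T]$ and abbreviate $Z^i:=(c^i_{\floor*{Nt}},w^i_{\floor*{Nt}})$, so that $\rho^N_t$ is the law of $(Z^1,\dots,Z^N)$, this family is exchangeable (as noted after Assumption \ref{A:Assumption1}), and the empirical measure $\frac1N\sum_{i=1}^N\delta_{Z^i}$ has the same law as $\mu^N_t=\nu^N_{\floor*{Nt}}$. Next I would pass from the path-space convergence of Theorem \ref{TheoremLLN} to the time-$t$ marginal: by Remark \ref{R:ProbConvergence} we have $d_E(\mu^N,\bar\mu)\to 0$ in probability, and since $s\mapsto\bar\mu_s$ is weakly continuous on $[0,T]$ — a property obtained in establishing Theorem \ref{TheoremLLN}, or directly from \eqref{EvolutionEquationIntroduction} together with the uniform-in-$s$ moment bounds on $\bar\mu_s$, which make $s\mapsto\langle f,\bar\mu_s\rangle$ Lipschitz for each $f\in C^2_b$ — convergence in the Skorokhod metric to this continuous limit is uniform on $[0,T]$. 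In particular $\mu^N_t\to\bar\mu_t$ in probability in $\mathcal M(\mathbb R^{1+d})$, and hence $\langle f,\mu^N_t\rangle\to\langle f,\bar\mu_t\rangle$ in probability for every $f\in C^2_b(\mathbb R^{1+d})$, with $|\langle f,\mu^N_t\rangle|\le\|f\|_\infty$.

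Then I would close the argument by comparing ``sampling without replacement'' with ``sampling with replacement''. Expanding the product and using exchangeability,
\[
\mathbb E\Big[\prod_{j=1}^{k}\langle f_j,\mu^N_t\rangle\Big]=\frac{1}{N^k}\sum_{i_1,\dots,i_k}\mathbb E\big[f_1(Z^{i_1})\cdots f_k(Z^{i_k})\big],
\]
the $N(N-1)\cdots(N-k+1)$ terms with pairwise distinct indices each equal $\langle f_1\otimes\cdots\otimes f_k,\rho^N_t\rangle$, while the remaining $O(N^{k-1})$ terms are bounded in absolute value by $\prod_{j=1}^{k}\|f_j\|_\infty$; therefore
\[
\langle f_1\otimes\cdots\otimes f_k,\rho^N_t\rangle=\mathbb E\Big[\prod_{j=1}^{k}\langle f_j,\mu^N_t\rangle\Big]+O(1/N).
\]
Since each factor $\langle f_j,\mu^N_t\rangle$ converges in probability to the constant $\langle f_j,\bar\mu_t\rangle$ and all factors are uniformly bounded, the product converges in probability to $\prod_{j=1}^{k}\langle f_j,\bar\mu_t\rangle$, and the bounded convergence theorem upgrades this to convergence of the expectations. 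Letting $N\to\infty$ in the last display yields \eqref{PropChaos}; as the argument used only boundedness and continuity of the test functions, it applies to all $f_j\in C_b(\mathbb R^{1+d})$, so $\rho^N_t$ is genuinely $\bar\mu_t$-chaotic.

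The point requiring care — more bookkeeping than a genuine obstacle — is the passage from the path-space statement of Theorem \ref{TheoremLLN} to the fixed-time statement: one must know that $\bar\mu$ is weakly continuous in time, so that every $t\in(0,T]$ is a continuity point of the limit, and that the first moment $\langle|c|,\bar\mu_s\rangle$ is finite and bounded uniformly on $[0,T]$ (needed, e.g., for $\langle c\,\sigma'(w\cdot x)\,x\cdot\nabla_w f,\bar\mu_s\rangle$ to be well defined and bounded in $s$). Both facts are available from the moment control established in the proof of Theorem \ref{TheoremLLN}; everything else is the standard bookkeeping of Sznitman's equivalence.
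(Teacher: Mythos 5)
Your proposal is correct and follows essentially the same route as the paper: the paper likewise deduces chaoticity from the convergence in distribution of $\mu^N$ to the deterministic limit $\bar\mu$ established in Theorem \ref{TheoremLLN}, together with exchangeability of $(c^i_k,w^i_k)_{i=1}^N$, by invoking the Tanaka--Sznitman equivalence (citing Theorem 3.2 of \cite{Gottlieb} and \cite{Sznitman}). The only difference is that you unpack that cited equivalence explicitly — the sampling with/without replacement comparison and the passage from path-space to fixed-time convergence via continuity of $t\mapsto\bar\mu_t$ — whereas the paper treats it as a black box; your handling of those details is sound.
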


\subsection{Insights from Law of Large Numbers and Numerical Studies} \label{Insights}
The law of large numbers (\ref{EvolutionEquationIntroduction}) suggests several interesting characteristics of trained neural networks (at least in the setting studied in this paper).
\begin{itemize}
\item As $N \rightarrow \infty$, the neural network converges (in probability) to a deterministic model. This is despite the fact that the neural network is randomly initialized and it is trained on a random sequence of data samples via stochastic gradient descent.
\item The learning rate $\alpha$ was assumed to be constant and to not decay with time.  However, notice that the hidden layer has been normalized by $1/N$ and it is this normalization by $1/N$ in the hidden layer that replaces the role of the learning rate decay, enabling convergence. 
\item As it also discussed in Remark \ref{R:GradientFlow}, the PDE (\ref{GradientFlow}) is a gradient flow for the limiting objective function (\ref{Eq:LimitObjectiveFcn}) in the space of probability measures on $\mathbb{R}^{1+d}$ endowed with the Wasserstein metric. Hence, the limiting law of large numbers  goal is to minimize the limit objective function $\bar{L}(p)$ as defined by (\ref{Eq:LimitObjectiveFcn}).
\item The propagation of chaos result (\ref{PropChaos}) indicates that, as $N \rightarrow \infty$, the dynamics of the weights $(c^i_k, w^i_k)$ will become independent of the dynamics of the weights $(c^j_k, w^j_k)$ for any $i \neq j$. Note that the dynamics $(c^i_k, w^i_k)$ are still random due to the random initialization. However, the dynamics of the $i$-th set of weights will be uncorrelated with the dynamics of the $j$-th set of weights in the limit as $N\rightarrow\infty$.
\end{itemize}

In order to illustrate some aspects of the theoretical results of this paper, we performed the following numerical study.

Figure \ref{ConvergenceFigure} displays the convergence of the distribution of the parameters in a trained neural network as the number of hidden units $N \rightarrow \infty$. The neural network has a single hidden layer followed by a softmax function. Figure \ref{ConvergenceFigure} reports the distribution of the parameters connecting the hidden layer to the softmax function. The distributions are presented as histograms. The neural network is trained on the MNIST dataset, which is a standard image dataset in machine learning \cite{MNIST}. The dataset includes $60,000$ images of handwritten numbers $\{ 0,1,2, \ldots, 9 \}$.  The neural network is trained to identify the handwritten numbers using only the image pixels as an input (i.e., it learns to recognize images as a human would). In the MNIST dataset, each image has $784$ pixels. A pixel takes values in $\{0, 1, \ldots, 255 \}$.\footnote{The pixel values are normalized to $[0,1]$ for the purposes of training the neural network.} Neural networks can achieve 98-99\% out-of-sample accuracy on the MNIST dataset.

Figure \ref{ConvergenceFigure} shows that the distribution of parameters converges to a fixed distribution as $N \rightarrow \infty$. This can be seen by the fact that the distributions for $N = 10,000$, $N = 100,000$, and $N = 250,000$ are nearly identical. A priori it is unclear if the distribution of neural network parameters should converge as $N \rightarrow \infty$. Our theory and numerical results confirm that this is indeed the case. Indeed, as $N$ gets large, we see that the empirical distribution of the parameters connecting the hidden layer to the softmax function converges to a specific deterministic distribution.
\begin{figure}[ht!]
\begin{center}
\includegraphics[width=.47\textwidth]{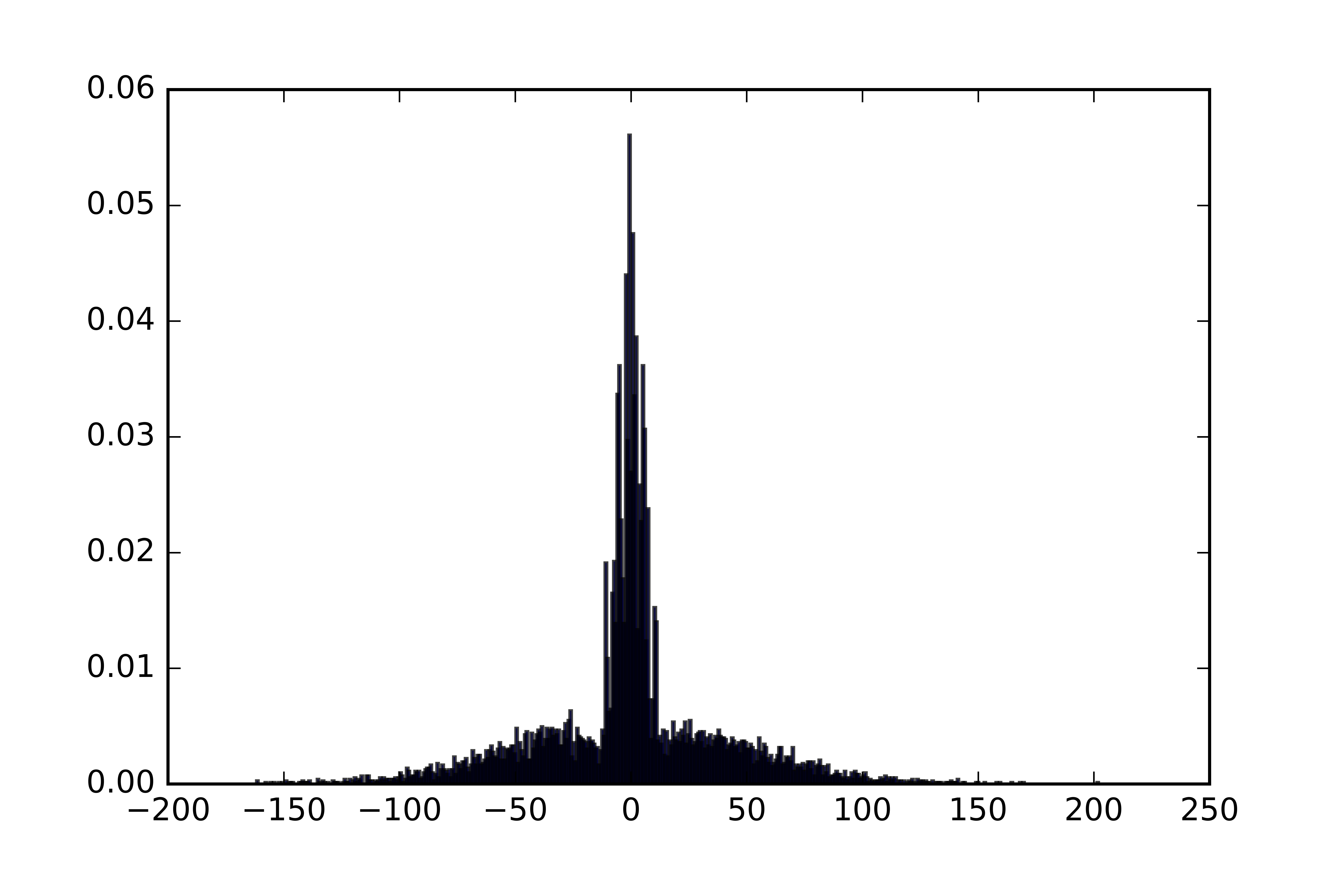}
\includegraphics[width=.47\textwidth]{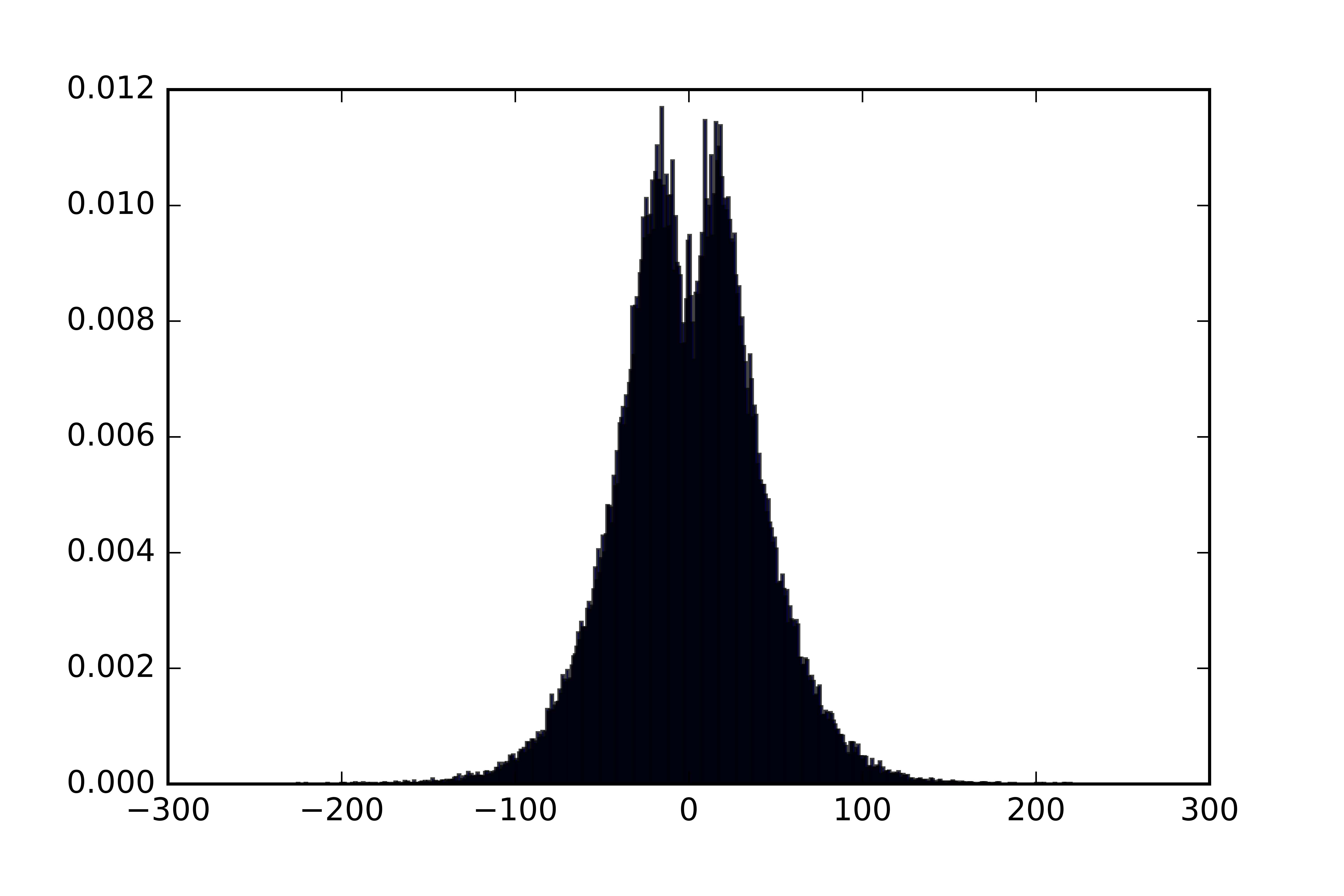}
\includegraphics[width=.47\textwidth]{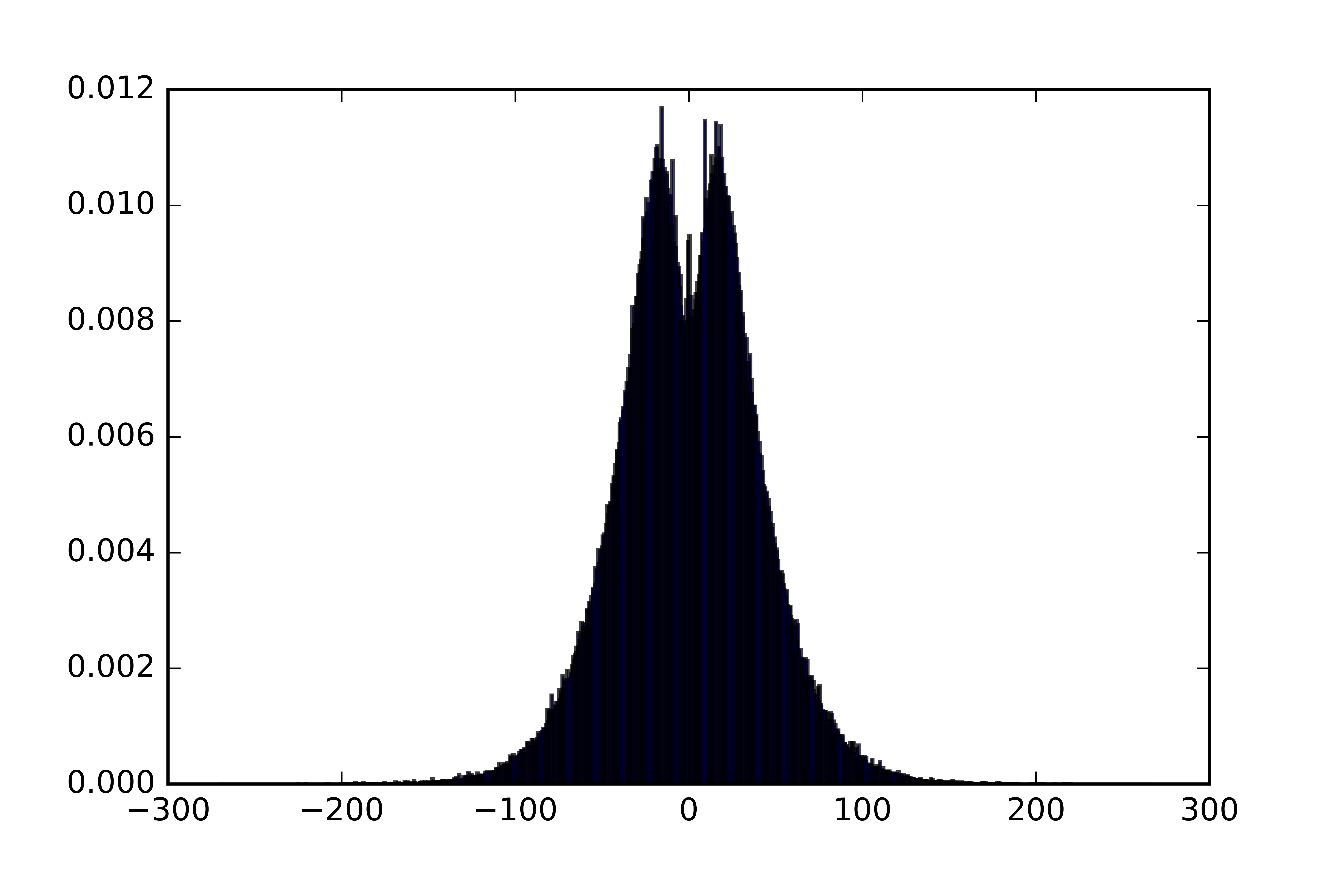}
\includegraphics[width=.47\textwidth]{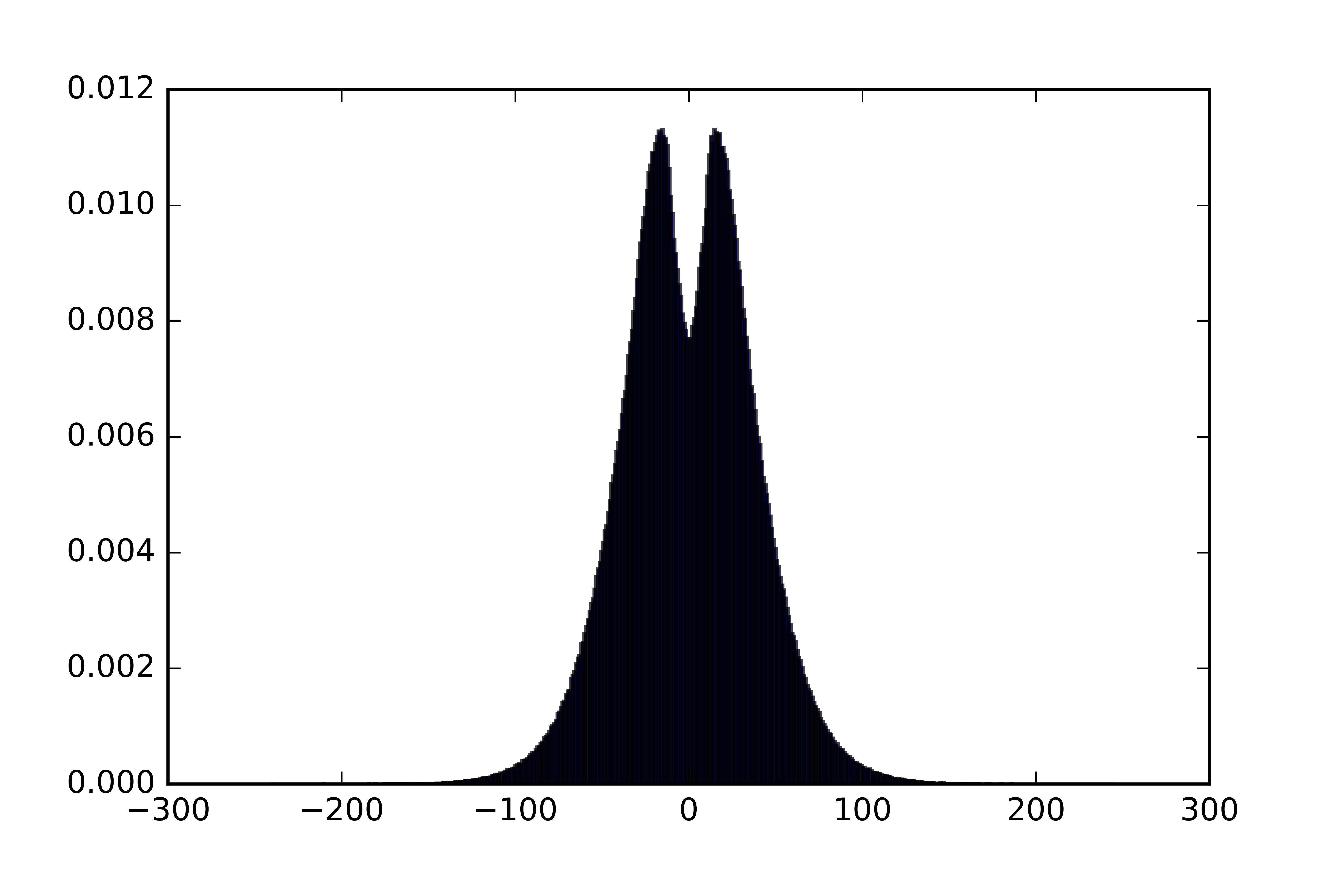}
\end{center}
\caption{Distribution of parameters for a neural network trained on MNIST dataset. Clockwise: $N = 1,000$, $N = 10,000$, $N = 100,000$, and $N = 250,000$ hidden units.}
\label{ConvergenceFigure}
\end{figure}

\subsection{Overview of the Proof } \label{OverviewOfProof}
The rest of the paper is organized as follows. Section \ref{RelativeCompactness} proves relative compactness of the family $\{\mu^{N}\}_{N\in\mathbb{N}}$. Section \ref{Identification} identifies the limit point of any convergent subsequence. The limit point must satisfy the measure evolution equation (\ref{EvolutionEquationIntroduction}). Section \ref{Uniqueness} proves uniqueness of the evolution equation (\ref{EvolutionEquationIntroduction}) via a fixed point argument. Then, by Prokhorov's Theorem, these results prove that the sequence of probability measures $\pi^N$ of the processes $\mu^N$ weakly converge to $\pi$, the probability measure of the process $\bar \mu$ satisfying equation (\ref{EvolutionEquationIntroduction}). These results are collected together in Section \ref{FinalProof} to prove Theorem \ref{TheoremLLN}, Corollary \ref{CorollaryLLN}, and Theorem \ref{TheoremChaos}. We conclude with a discussion of our results in Section \ref{Conclusion}.

\section{Relative Compactness} \label{RelativeCompactness}
We now prove relative compactness of the family $\{\mu^{N}\}_{N\in\mathbb{N}}$ in $D_E([0,T])$ where $E = \mathcal{M} ( \mathbb{R}^{1+d})$.  It is sufficient to show compact containment and regularity of the $\mu^{N}$'s (see for example Chapter 3 of \cite{EthierAndKurtz}). We start with a crucial a-priori bound for the SGD iterates as given by (\ref{Eq:SGD}), Lemma \ref{L:AprioriBound}.
\begin{lemma}\label{L:AprioriBound}
Consider the system (\ref{Eq:SGD}). Then, for $k\leq TN$ and uniformly in $i\in\mathbb{N}$, there exists a constant $C<\infty$ such that
\begin{align*}
\mathbb{E}\left[|c^i_k|+\parallel w^i_k \parallel \right] \leq C.
\end{align*}
In particular, we have
\begin{align*}
\sup_{N\in\mathbb{N}, k/N\leq T}\frac{1}{N}\sum_{i=1}^{N}\mathbb{E}\left[|c^i_k|+\parallel w^i_k \parallel \right] \leq C.
\end{align*}
\end{lemma}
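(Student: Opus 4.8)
The plan is to run a discrete Gronwall argument resting on three ingredients: the boundedness of $\sigma$ and $\sigma'$ from Assumption \ref{A:Assumption1}, the $1/N$ scaling of the SGD increments in (\ref{Eq:SGD}), and the independence of the fresh sample $(x_k,y_k)$ from the parameters $\theta_k$ (which depend only on the initialization and on $(x_0,y_0),\dots,(x_{k-1},y_{k-1})$). Writing $\|\sigma\|_\infty,\|\sigma'\|_\infty$ for the sup-norms of $\sigma,\sigma'$ and $M_k=\frac1N\sum_{j=1}^N|c^j_k|$, one has $|g^N_{\theta_k}(x_k)|=|\la c\sigma(w\cdot x_k),\nu^N_k\ra|\le\|\sigma\|_\infty M_k$, hence from (\ref{Eq:SGD}) and $|y_k-g^N_{\theta_k}(x_k)|\le|y_k|+\|\sigma\|_\infty M_k$,
\begin{align}
|c^i_{k+1}|&\le|c^i_k|+\frac{\alpha\|\sigma\|_\infty}{N}\bigl(|y_k|+\|\sigma\|_\infty M_k\bigr),\label{eq:prop-c}\\
\|w^i_{k+1}\|&\le\|w^i_k\|+\frac{\alpha\|\sigma'\|_\infty}{N}\bigl(|y_k|+\|\sigma\|_\infty M_k\bigr)|c^i_k|\,\|x_k\|.\label{eq:prop-w}
\end{align}
These two inequalities are the starting point.

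\textbf{Step 1.} I would first control $M_k$. Averaging (\ref{eq:prop-c}) over $i$ gives $M_{k+1}\le M_k+\frac{C}{N}(|y_k|+M_k)$; taking expectations, using that $(x_k,y_k)$ is independent of the $\theta_k$-measurable $M_k$, and $\mathbb{E}|y_k|\le(\mathbb{E}|y_k|^4)^{1/4}<\infty$, yields $\mathbb{E}M_{k+1}\le(1+C/N)\mathbb{E}M_k+C/N$. Iterating and using $(1+C/N)^{TN}\le e^{CT}$ shows $\sup_{k\le TN}\mathbb{E}M_k\le e^{CT}(\mathbb{E}|c^1_0|+C')=:C_1<\infty$, since the exponential-moment bound on $|c^1_0|$ in Assumption \ref{A:Assumption1} certainly gives $\mathbb{E}|c^1_0|<\infty$. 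By exchangeability of $(c^i_k,w^i_k)_{i=1}^N$ this is already the desired uniform-in-$i$ bound for $c$: $\mathbb{E}|c^i_k|=\mathbb{E}M_k\le C_1$.

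\textbf{Step 2.} Since (\ref{eq:prop-w}) is driven by $|c^i_k|$, I would next propagate a second moment of $c^i_k$. Squaring (\ref{eq:prop-c}), taking expectations, bounding the cross term by Cauchy--Schwarz and exchangeability, $\mathbb{E}[|c^i_k|M_k]\le(\mathbb{E}|c^i_k|^2)^{1/2}(\mathbb{E}M_k^2)^{1/2}\le\mathbb{E}|c^i_k|^2$ (as $\mathbb{E}M_k^2\le\frac1N\sum_j\mathbb{E}|c^j_k|^2=\mathbb{E}|c^i_k|^2$ by convexity and exchangeability), and using independence of $(x_k,y_k)$ from $\theta_k$, $\mathbb{E}|y_k|^2<\infty$ and $1/N^2\le1/N$, one gets $\mathbb{E}|c^i_{k+1}|^2\le(1+C/N)\mathbb{E}|c^i_k|^2+C/N$. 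The same iteration gives $\sup_{k\le TN}\mathbb{E}|c^i_k|^2\le C_2<\infty$ uniformly in $i$.

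\textbf{Step 3 and conclusion.} Finally I would take expectations in (\ref{eq:prop-w}). Using once more that $(x_k,y_k)$ is independent of $(c^i_k,M_k)$,
\[
\mathbb{E}\bigl[(|y_k|+\|\sigma\|_\infty M_k)|c^i_k|\,\|x_k\|\bigr]\le\mathbb{E}[|y_k|\,\|x_k\|]\,\mathbb{E}|c^i_k|+\|\sigma\|_\infty\,\mathbb{E}\|x_k\|\,\mathbb{E}[M_k|c^i_k|],
\]
and $\mathbb{E}[|y_k|\,\|x_k\|]\le(\mathbb{E}|y_k|^2\,\mathbb{E}\|x_k\|^2)^{1/2}<\infty$, $\mathbb{E}\|x_k\|<\infty$, $\mathbb{E}|c^i_k|\le C_1$, $\mathbb{E}[M_k|c^i_k|]\le C_2$ from Step 2. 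Hence $\mathbb{E}\|w^i_{k+1}\|\le\mathbb{E}\|w^i_k\|+C/N$, and summing over the at most $TN$ steps, $\mathbb{E}\|w^i_k\|\le\mathbb{E}\|w^i_0\|+CT\le C$, uniformly in $i$ (using $\mathbb{E}\|w^1_0\|<\infty$). Adding the bounds from Steps 1 and 3 proves the first inequality, and the ``in particular'' statement follows because every constant above is independent of $N$, so one may average over $i$ and take $\sup_N$.

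\textbf{Expected main obstacle.} The one subtle point is the feedback through $g^N_{\theta_k}=\la c\sigma(w\cdot x),\nu^N_k\ra$, which couples all $N$ particles: the increment of $w^i$ involves the average $M_k$ of the $|c^j_k|$, so the a priori first- and second-moment bounds on the $c^j_k$ (Steps 1--2) genuinely have to be established before the $w$-estimate, and the Gronwall step closes only because the increments are $O(1/N)$, which keeps $(1+C/N)^{TN}$ bounded uniformly in $N$. Everything else is routine bookkeeping with Cauchy--Schwarz, exchangeability, and the moment bounds of Assumption \ref{A:Assumption1}.
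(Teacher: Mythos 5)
Your proof is correct, and it follows the same overall strategy as the paper: first close a discrete Gronwall recursion for the $c^i$'s (which works because $\sigma$ is bounded, so $|g^N_{\theta_k}(x_k)|\leq \|\sigma\|_{\infty}M_k$ and the $c$-system is self-contained), and only then feed the resulting moment bounds into the $w$-recursion. The execution differs in two respects. First, the paper runs the Gronwall argument pathwise — it bounds $m_k^N=\frac1N\sum_i|c_k^i|$ and then $|c_k^i|$ itself in terms of $|c_0^i|$ and the $|y_j|$'s, raises to the power $p$ for $1\leq p\leq 4$, and only then takes expectations, yielding the stronger intermediate estimate $\mathbb{E}|c_k^i|^p\leq C$ for all $p\in[1,4]$ (equation (\ref{Eq:Bound_c})); you instead take expectations immediately and iterate $\mathbb{E}M_{k+1}\leq(1+C/N)\mathbb{E}M_k+C/N$, invoking exchangeability to pass from $\mathbb{E}M_k$ to $\mathbb{E}|c_k^i|$. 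Second, for the $w$-bound the paper uses Young's inequality pathwise, which is why it needs fourth moments of $c_k^i$ and of $(x_k,y_k)$, whereas you exploit the independence of the fresh sample $(x_k,y_k)$ from $\mathcal{F}_k$ to factor $\mathbb{E}[\|x_k\|]$ and $\mathbb{E}[|y_k|\,\|x_k\|]$ out, so that only second moments of $c_k^i$ (your Step 2) are required. Your route is leaner for the lemma as literally stated; the paper's pathwise $L^p$ version costs a little more bookkeeping but delivers the fourth-moment bound on $c_k^i$ as a byproduct, which is what is implicitly relied upon later (e.g., in the second-moment estimates for the martingale terms in Lemma \ref{L:MartingaleTermsZero}). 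Both arguments hinge on the same mechanism you flag at the end: the $O(1/N)$ increments keep $(1+C/N)^{\floor*{NT}}$ bounded uniformly in $N$.
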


For the purposes of presentation, the proof of Lemma \ref{L:AprioriBound} will be given at the end of this section. First, we prove compact containment for  the measure-valued process $\{\mu_t^N, t\in[0,T]\}_{N\in\mathbb{N}}$.
\begin{lemma}\label{L:CompactContainment}
For each $\eta > 0$, there is a compact subset $\mathcal{K}$ of E such that
\begin{eqnarray*}
\sup_{N \in \mathbb{N}, 0 \leq t \leq T} \mathbb{P}[ \mu_t^N \notin \mathcal{K} ] < \eta.
\end{eqnarray*}
\end{lemma}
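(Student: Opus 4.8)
The plan is to produce, for a given $\eta>0$, a single sublevel set of a coercive linear functional on measures and to use the a priori bound of Lemma \ref{L:AprioriBound} to trap $\mu^N_t$ inside it with probability at least $1-\eta$, uniformly in $N$ and $t\in[0,T]$. Concretely, set $\phi(c,w)=|c|+\norm{w}$ and, for $M>0$, define
\[
\mathcal{K}_M=\left\{\mu\in\mathcal{M}(\mathbb{R}^{1+d}):\ \la\phi,\mu\ra\le M\right\}\subseteq E .
\]
The argument then has two ingredients: (i) $\mathcal{K}_M$ is a compact subset of $E$ (with the topology of weak convergence), and (ii) $\mathbb{P}[\mu^N_t\notin\mathcal{K}_M]$ is small, uniformly, once $M$ is large.

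For (i), I would first note that $\mathcal{K}_M$ is tight: by Markov's inequality $\la\phi,\mu\ra\le M$ forces $\mu(\{\phi>R\})\le M/R$, and $\{\phi\le R\}=\{(c,w):|c|+\norm{w}\le R\}$ is compact in $\mathbb{R}^{1+d}$; since every element of $\mathcal{K}_M$ is a probability measure (total mass automatically $1$), Prokhorov's theorem gives that $\mathcal{K}_M$ is relatively compact. It is also closed, because $\phi$ is continuous and nonnegative, so $\mu\mapsto\la\phi,\mu\ra=\sup_n\la\phi\wedge n,\mu\ra$ is lower semicontinuous (each $\phi\wedge n$ being bounded and continuous), equivalently: if $\mu_k\to\mu$ weakly then $\la\phi,\mu\ra\le\liminf_k\la\phi,\mu_k\ra\le M$. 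Being closed and relatively compact, $\mathcal{K}_M$ is compact. For (ii), write $\la\phi,\mu^N_t\ra=\frac1N\sum_{i=1}^{N}\big(|c^i_{\floor*{Nt}}|+\norm{w^i_{\floor*{Nt}}}\big)$; since $\floor*{Nt}\le Nt\le NT$ for $t\in[0,T]$, the ``in particular'' part of Lemma \ref{L:AprioriBound} applies, and Markov's inequality yields
\[
\mathbb{P}\!\left[\mu^N_t\notin\mathcal{K}_M\right]
=\mathbb{P}\!\left[\tfrac1N\sum_{i=1}^{N}\big(|c^i_{\floor*{Nt}}|+\norm{w^i_{\floor*{Nt}}}\big)>M\right]
\le\frac1M\,\mathbb{E}\!\left[\tfrac1N\sum_{i=1}^{N}\big(|c^i_{\floor*{Nt}}|+\norm{w^i_{\floor*{Nt}}}\big)\right]
\le\frac{C}{M},
\]
with $C$ the constant of Lemma \ref{L:AprioriBound}, independent of $N$ and $t$. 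Choosing $M=C/\eta$ and $\mathcal{K}=\mathcal{K}_{C/\eta}$ finishes the proof.

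The main (and really only) obstacle is the verification in step (i) that $\mathcal{K}_M$ is genuinely compact rather than merely tight/relatively compact; this is taken care of by the lower semicontinuity of $\mu\mapsto\la\phi,\mu\ra$, which makes $\mathcal{K}_M$ closed, so that Prokhorov's theorem upgrades relative compactness to compactness. All of the probabilistic substance — in particular any delicacy coming from the fact that the SGD gradients are not globally Lipschitz or bounded — is already absorbed into Lemma \ref{L:AprioriBound}; beyond that uniform first-moment bound, nothing about the fine structure of the recursion \eqref{Eq:SGD} is needed here.
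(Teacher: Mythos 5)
Your proof is correct, and it rests on the same two pillars as the paper's own argument --- the uniform first-moment bound of Lemma \ref{L:AprioriBound} combined with Markov's inequality, plus Prokhorov's theorem to certify compactness in $E$ --- but the compact set itself is built differently. The paper follows the construction of Lemma 6.1 of \cite{TypicalDefaults}: it fixes the compact cubes $K_{(L+j)^2}$, imposes the countable family of tail conditions $\nu(\mathbb{R}^{1+d}\setminus K_{(L+j)^2})<1/\sqrt{L+j}$, takes the closure to obtain $\hat K_L$, and controls $\mathbb{P}[\mu^N_t\notin\hat K_L]$ by a union bound over $j$, each term handled by Markov; compactness of $\hat K_L$ is then quoted as standard. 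You instead take a single sublevel set $\{\mu:\la\phi,\mu\ra\le M\}$ of the coercive functional $\phi(c,w)=|c|+\norm{w}$, which packages all the tail conditions at once (Markov on the measure level gives $\mu(\{\phi>R\})\le M/R$ for every $R$ simultaneously), and you must then supply the one extra observation that the paper's closure operation renders unnecessary: that this sublevel set is already closed, which you correctly obtain from the lower semicontinuity of $\mu\mapsto\la\phi,\mu\ra=\sup_n\la\phi\wedge n,\mu\ra$. The trade is a union bound and an appeal to a standard compactness fact versus a short semicontinuity argument; your version also avoids the paper's specific choice of rates $(L+j)^2$ and $1/\sqrt{L+j}$. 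Both arguments implicitly use that the elements of $E$ in play are probability measures, so that tightness alone yields relative compactness; and, pedantically, you should take $M=2C/\eta$ rather than $M=C/\eta$ to get the strict inequality in the statement. As you note, all the probabilistic substance is already absorbed into Lemma \ref{L:AprioriBound}.
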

\begin{proof}
For each $L>0$, define $K_L=[0,L]^{1+d}$.  Then, we have that $K_L$ is a compact  subset of $\mathbb{R}^{1+d}$, and for each $t\geq 0$ and $N\in \mathbb{N}$,
\begin{equation*}
\mathbb{E}\left[\mu^N_t(\mathbb{R}^{1+d}\setminus K_L)\right] = \frac{1}{N}\sum_{i=1}^N \mathbb{P}\left[ |c^i_{\floor*{N t}}|+\parallel w^i_{\floor*{N t}} \parallel \geq L\right] \leq \frac{C}{L}.
\end{equation*}
where the constant $C<\infty$ is from Lemma \ref{L:AprioriBound}. The rest of the proof is standard now (see for example Lemma 6.1 of \cite{TypicalDefaults}). We define the compact subsets of $E = \mathcal{M} ( \mathbb{R}^{1+d})$
\begin{equation*}
\hat{K}_L  = \overline{\left\{ \nu:\, \nu(\mathbb{R}^{1+d}\setminus K_{(L+j)^2}) < \frac{1}{\sqrt{L+j}} \textrm{ for all } j\in \mathbb{N}\right\}}
\end{equation*}
and we observe that
\begin{align*}
 \mathbb{P}\left\{ \mu^N_t\not \in \hat{K}_L\right] &\leq \sum_{j=1}^\infty \mathbb{P}\left[ \mu^N_t(\mathbb{R}^{1+d}\setminus K_{(L+j)^2} )> \frac{1}{\sqrt{L+j}}\right]
\leq \sum_{j=1}^\infty \frac{\mathbb{E}[\mu^N_t(\mathbb{R}^{1+d}\setminus K_{(L+j)^2})]}{1/\sqrt{L+j}}\\
&\le \sum_{j=1}^\infty \frac{C}{(L+j)^2/\sqrt{L+j}}
\le \sum_{j=1}^\infty \frac{C}{(L+j)^{3/2}}.
\end{align*}
Given now that $\lim_{L\to \infty}\sum_{j=1}^\infty\frac{C}{(L+j)^{3/2}} =0$, the proof of the lemma is concluded.
\end{proof}

We now establish regularity of the $\mu^{N}$'s. Define the function $q(z_{1},z_{2})=\min\{|z_{1}-z_{2}|,1\}$ where $z_{1},z_{2} \in \mathbb{R}$.
\begin{lemma}\label{L:regularity}
Let  $f \in C^{2}_{b}(\mathbb{R}^{1+d})$. For any $p \in (0,1)$, there is a constant $C<\infty$ such that for $0\leq u\leq \delta$,  $0\leq v\leq \delta\wedge t$, $t\in[0,T]$,
\begin{equation*}
 \mathbb{E}\left[q(\left< f,\mu^N_{t+u}\right>,\left< f,\mu^N_t\right>)q(\left< f,\mu^N_t\right>,\left< f,\mu^N_{t-v}\right>)\big| \mathcal{F}^N_t\right]  \le  C \delta^{p} + \frac{C}{N}.
\end{equation*}
%
%
\end{lemma}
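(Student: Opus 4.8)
The plan is to control the two increments $\langle f, \mu^N_{t+u}\rangle - \langle f, \mu^N_t\rangle$ and $\langle f, \mu^N_t\rangle - \langle f, \mu^N_{t-v}\rangle$ separately and then combine them via Cauchy--Schwarz (or H\"older). Since $q(z_1,z_2) \le |z_1 - z_2|$ and $q(z_1,z_2)\le 1$, it suffices to show, for instance, that each factor is bounded in a suitable $L^p$ sense by $C\delta^{p'} + C/N$ for some $p' > 0$, and then interpolate. More precisely, I would first establish that for any $f\in C^2_b(\mathbb{R}^{1+d})$ and any $0\le s \le s' \le T$ with $s' - s \le \delta$,
\begin{equation*}
\mathbb{E}\left[\,\bigl|\langle f,\mu^N_{s'}\rangle - \langle f,\mu^N_s\rangle\bigr|\,\bigm|\mathcal{F}^N_s\right] \le C\delta + \frac{C}{N}.
\end{equation*}

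To prove this one-step increment bound, I would return to the SGD recursion (\ref{Eq:SGD}). Writing $\langle f,\nu^N_{k+1}\rangle - \langle f,\nu^N_k\rangle = \frac{1}{N}\sum_{i=1}^N \bigl(f(c^i_{k+1},w^i_{k+1}) - f(c^i_k,w^i_k)\bigr)$ and Taylor-expanding $f$ to first order, the increment of each particle is $O(1/N)$ because the SGD updates carry a factor $\alpha/N$; the first-order term is
\begin{equation*}
\frac{\alpha}{N^2}\sum_{i=1}^N (y_k - g^N_{\theta_k}(x_k))\Bigl(\sigma(w^i_k\cdot x_k)\partial_c f + c^i_k\sigma'(w^i_k\cdot x_k)\,x_k\cdot\nabla_w f\Bigr) + (\text{second-order remainder}).
\end{equation*}
Using $\sigma,\sigma'\in C_b$, boundedness of $\nabla f, \nabla^2 f$, and the bound $|g^N_{\theta_k}(x_k)| \le \|\sigma\|_\infty \cdot \frac{1}{N}\sum_i |c^i_k|$, each summand is bounded in absolute value by $\frac{C}{N^2}(1 + |y_k|)(1 + |c^i_k|)(1 + \|x_k\|)\bigl(1 + \frac{1}{N}\sum_j|c^j_k|\bigr)$ plus a similar second-order term of order $1/N^2$. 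Summing $i$ from $1$ to $N$, taking expectations, and invoking the a priori moment bound of Lemma \ref{L:AprioriBound} together with Assumption \ref{A:Assumption1} (the fourth moments of $x_k$, $y_k$ and the uniform bound on $\frac1N\sum_i\mathbb{E}|c^i_k|$), one gets that $\mathbb{E}[\,|\langle f,\nu^N_{k+1}\rangle - \langle f,\nu^N_k\rangle|\,] \le C/N$ uniformly in $k\le TN$. Summing over the roughly $N\delta$ steps between time $s$ and time $s'$ (plus handling the at most one extra step coming from the floor function, which contributes $C/N$) gives the claimed $C\delta + C/N$ bound; conditioning on $\mathcal{F}^N_s$ is harmless since the fresh data samples $(x_k,y_k)$ for $k\ge \lfloor Ns\rfloor$ are independent of $\mathcal{F}^N_s$ and the per-step bound holds conditionally.

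Finally, with $A := q(\langle f,\mu^N_{t+u}\rangle,\langle f,\mu^N_t\rangle)$ and $B := q(\langle f,\mu^N_t\rangle,\langle f,\mu^N_{t-v}\rangle)$, I would bound $\mathbb{E}[AB\mid\mathcal{F}^N_t]$. Note $B$ is $\mathcal{F}^N_t$-measurable, so $\mathbb{E}[AB\mid\mathcal{F}^N_t] = B\,\mathbb{E}[A\mid\mathcal{F}^N_t]$. Since $A \le 1$ and $A \le |\langle f,\mu^N_{t+u}\rangle - \langle f,\mu^N_t\rangle|$, for any $p\in(0,1)$ we have $A = A^{1-p}A^p \le A^p \le |\langle f,\mu^N_{t+u}\rangle - \langle f,\mu^N_t\rangle|^p$, and by Jensen (concavity of $x\mapsto x^p$) $\mathbb{E}[A\mid\mathcal{F}^N_t] \le \bigl(\mathbb{E}[\,|\langle f,\mu^N_{t+u}\rangle - \langle f,\mu^N_t\rangle|\mid\mathcal{F}^N_t]\bigr)^p \le (C\delta + C/N)^p \le C\delta^p + C/N^p \le C\delta^p + C/N$ for $N\ge1$. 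Using $B\le1$ then yields the lemma. The main obstacle is the first, computational step: verifying carefully that the second-order Taylor remainder and all the cross terms coming from the simultaneous update of all $N$ particles are genuinely of order $1/N^2$ per particle (hence $1/N$ after summation), which relies essentially on the $1/N$ scaling in (\ref{Eq:SGD}) and on the uniform moment control from Lemma \ref{L:AprioriBound}; everything after that is soft measure-theoretic bookkeeping.
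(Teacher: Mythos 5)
Your proposal follows essentially the same route as the paper: Taylor-expand $f$, use the $\alpha/N$ scaling of the SGD updates together with the boundedness of $\sigma$, $\sigma'$ and the a priori moment bounds of Lemma \ref{L:AprioriBound} to get a per-step increment of order $1/N$ in expectation, sum over the $\approx N\delta$ steps, and then dispose of the product of the two $q$-factors using that the backward factor is $\mathcal{F}^N_t$-measurable and bounded by $1$. The one flaw is in your very last chain of inequalities: for $p\in(0,1)$ and $N\ge 1$ one has $N^{-p}\ge N^{-1}$, so the step $(C\delta+C/N)^p\le C\delta^p+C/N^p\le C\delta^p+C/N$ is backwards. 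The detour through Jensen that produces this is unnecessary: since $q(z_1,z_2)\le|z_1-z_2|$, you can bound $\mathbb{E}[A\mid\mathcal{F}^N_t]\le C\delta+C/N$ directly and then use only $\delta\le C_T\,\delta^{p}$ (valid for $0\le\delta\le T$) on the first term, leaving the $C/N$ term untouched --- which is exactly what the paper does. With that one-line repair the argument is correct and matches the paper's proof.
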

\begin{proof}
We start by noticing that a Taylor expansion gives for $0\leq s\leq t\leq T$ \begin{eqnarray}
| \la f , \mu^N_{t} \ra -  \la f , \mu^N_{s} \ra  | &=& | \la f , \mu^N_{t} \ra -  \la f , \mu^N_{s} \ra  | = | \la f,   \nu^N_{\floor*{N t} } \ra -  \la f,   \nu^N_{\floor*{N s} } \ra | \notag \\
&\leq& \frac{1}{N} \sum_{i=1}^N | f( c^i_{\floor*{N t}}, w^i_{\floor*{N t}}) - f(c^i_{\floor*{N s}}, w^i_{\floor*{N s}}) | \notag \\
&\leq& \frac{1}{N} \sum_{i=1}^N  | \partial_c f (  \bar c^i_{\floor*{N t}}, \bar w^i_{\floor*{N t}} ) | |  c^i_{\floor*{N t}} -  c^i_{\floor*{N s}} | \notag \\
&+& \frac{1}{N} \sum_{i=1}^N  \parallel \nabla_w f (  \bar c^i_{\floor*{N t}}, \bar w^i_{\floor*{N t}} ) \parallel \parallel  w^i_{\floor*{N t}} -  w^i_{\floor*{N s}} \parallel,
\label{Regularity1}
\end{eqnarray}
for points $\bar c^{i}, \bar w^{i}$ in the segments connecting $c^i_{\floor*{N s}}$ with $c^i_{\floor*{N t}}$ and  $w^i_{\floor*{N s}}$ with $w^i_{\floor*{N t}}$, respectively.

Let's now establish a bound on $|  c^i_{\floor*{N t}} -  c^i_{\floor*{N s}} |$ for $s < t \leq T$. Let $0 < p < 1$.
\begin{eqnarray}
 \mathbb{E}|  c^i_{\floor*{N t}} -  c^i_{\floor*{N s}} | &=& \mathbb{E} | \sum_{k = \floor*{N s } }^{\floor*{N t}-1} ( c_{k+1} - c_k  ) | \notag \\
&\leq&  \mathbb{E} \sum_{k = \floor*{N s } }^{\floor*{N t}-1}  | \alpha (y_k - g_{\theta_k}^N(x_k) ) \frac{1}{N} \sigma (w^i_k \cdot x_k) | \notag \\
&\leq& \frac{1}{N} \sum_{k = \floor*{N s } }^{\floor*{N t}-1} C  \leq C (t -s ) + \frac{C}{N} \notag \\
&\leq& C (t - s)^{p} \mathbf{1}_{ t -s < 1 }  + C (t -s )^{p} T^{1/p} \mathbf{1}_{t -s \geq 1}  + \frac{C}{N} \notag \\
&\leq& C (t - s)^{p} + \frac{C}{N},\notag
\end{eqnarray}
where Assumption \ref{A:Assumption1} was used. Let's now establish a bound on $\parallel  w^i_{\floor*{N t}} -  w^i_{\floor*{N s}} \parallel$ for $s < t \leq T$. Making use of the uniform bounds established in Lemma \ref{L:AprioriBound}, we obtain similarly to the previous bound
\begin{eqnarray*}
  \mathbb{E} \parallel w^i_{\floor*{N t}} -  w^i_{\floor*{N s}} \parallel &=&  \mathbb{E}\parallel \sum_{k = \floor*{N s } }^{\floor*{N t}-1} ( w_{k+1} - w_k  ) \parallel \notag \\
&\leq&   \mathbb{E} \sum_{k = \floor*{N s } }^{\floor*{N t}-1} \parallel    \alpha (y_k - g_{\theta_k}^N(x_k) ) \frac{1}{N} c^i_k \sigma' (w^i_k \cdot x_k) x_k \parallel \notag \\
&\leq&  \frac{1}{N} \sum_{k = \floor*{N s } }^{\floor*{N t}-1} C  \notag \\
& \leq & C (t -s ) + \frac{C}{N} \leq C (t - s)^{p} + \frac{C}{N} .
\end{eqnarray*}

Now, we return to equation (\ref{Regularity1}). By Lemma \ref{L:AprioriBound}, the quantities $( \bar c^i_{\floor*{N t}}, \bar w^i_{\floor*{N t}} )$ are bounded in expectation for $0 < s < t \leq T$.  Therefore, for $0 < s < t \leq T$,
\begin{eqnarray*}
\mathbb{E}\left[| \la f , \mu^N_{t} \ra -  \la f , \mu^N_{s} \ra  | \big| \mathcal{F}^N_s \right] \leq C (t - s)^{p} + \frac{C}{N}.
\end{eqnarray*}
where $C<\infty$ is some unimportant constant. Then, the statement of the Lemma follows.
\end{proof}

We can now prove the required relative compactness of the sequence $\{\mu^{N}\}_{N\in\mathbb{N}}$. This implies that every subsequence $\mu^N$'s has a convergent sub-subsequence.

\begin{lemma}\label{L:RelativeCompactness}
The sequence of probability measures $\{\mu^N\}_{N\in\mathbb{N}}$ is relatively compact in $D_{E}([0,T])$.
\end{lemma}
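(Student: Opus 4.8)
The plan is to invoke the standard sufficient conditions for relative compactness of measure-valued càdlàg processes (Chapter 3 of \cite{EthierAndKurtz}, and the analogous arguments in \cite{TypicalDefaults} and \cite{Mattingly}): to prove that $\{\mu^N\}_{N\in\mathbb{N}}$ is relatively compact in $D_E([0,T])$ with $E = \mathcal{M}(\mathbb{R}^{1+d})$ it suffices to verify (i) a compact containment condition for $\{\mu^N\}$, and (ii) that for every $f$ in a suitable dense, separating subclass of $C_b(\mathbb{R}^{1+d})$ --- here we take $f \in C^2_b(\mathbb{R}^{1+d})$ --- the real-valued processes $\{\langle f,\mu^N\rangle\}_{N\in\mathbb{N}}$ are relatively compact in $D_{\mathbb{R}}([0,T])$. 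Condition (i) is exactly the content of Lemma \ref{L:CompactContainment}, so only (ii) remains.

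For (ii) I would apply Kurtz's tightness criterion for real-valued processes. Since $f$ is bounded, the process $\langle f,\mu^N_t\rangle$ takes values in the fixed compact interval $[-\norm{f}_\infty,\norm{f}_\infty]$, so the compact containment part for the scalar processes is automatic. The oscillation control required by the criterion is precisely what Lemma \ref{L:regularity} supplies: for $0\le u\le\delta$, $0\le v\le\delta\wedge t$, one has
\[
\mathbb{E}\left[q\big(\langle f,\mu^N_{t+u}\rangle,\langle f,\mu^N_t\rangle\big)\,q\big(\langle f,\mu^N_t\rangle,\langle f,\mu^N_{t-v}\rangle\big)\,\big|\,\mathcal{F}^N_t\right]\le C\delta^p+\frac{C}{N},
\]
and the right-hand side tends to $0$ upon letting first $N\to\infty$ and then $\delta\to 0$. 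Feeding this (together with the trivial pointwise boundedness) into the criterion yields relative compactness of $\{\langle f,\mu^N\rangle\}_{N\in\mathbb{N}}$ in $D_{\mathbb{R}}([0,T])$ for each fixed $f\in C^2_b(\mathbb{R}^{1+d})$.

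Finally I would assemble the pieces: $C^2_b(\mathbb{R}^{1+d})$ is dense in $C_b(\mathbb{R}^{1+d})$ for uniform convergence on compacts and is separating, so by the measure-valued version of the tightness theorem (as in Chapter 3 of \cite{EthierAndKurtz}, or the formulation used in \cite{TypicalDefaults}), the relative compactness of $\{\langle f,\mu^N\rangle\}$ for all such $f$, combined with the compact containment of Lemma \ref{L:CompactContainment}, gives relative compactness of $\{\mu^N\}_{N\in\mathbb{N}}$ in $D_E([0,T])$; in particular every subsequence has a further convergent sub-subsequence.

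I do not anticipate a genuine obstacle, because the substantive analytic work --- the a priori moment bound (Lemma \ref{L:AprioriBound}), the compact containment (Lemma \ref{L:CompactContainment}), and the regularity/oscillation estimate (Lemma \ref{L:regularity}) --- is already in hand, and what remains is the bookkeeping of fitting these into the Ethier--Kurtz framework. The one point I would write out with care is the verification that the test-function class $C^2_b(\mathbb{R}^{1+d})$ is simultaneously rich enough (dense and separating, hence adequate to control the weak topology on $\mathcal{M}(\mathbb{R}^{1+d})$) and regular enough for Lemma \ref{L:regularity} to apply; this is the only place where a small gap could hide.
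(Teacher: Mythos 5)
Your proposal is correct and follows essentially the same route as the paper: the paper's proof is a one-line invocation of Theorem 8.6 of Chapter 3 of \cite{EthierAndKurtz} (together with Remark 8.7 B there), feeding in exactly the compact containment of Lemma \ref{L:CompactContainment} and the oscillation bound of Lemma \ref{L:regularity}. Your additional remarks on reducing to the real-valued processes $\langle f,\mu^N\rangle$ for $f$ in a dense separating subclass are just a more explicit spelling-out of the same Ethier--Kurtz machinery.
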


\begin{proof}
Given Lemmas \ref{L:CompactContainment} and \ref{L:regularity}, Theorem 8.6 of Chapter 3 of \cite{EthierAndKurtz}, gives the statement of the lemma. (See also Remark 8.7 B of Chapter 3 of \cite{EthierAndKurtz} regarding replacing $\sup_N$ with $\lim_N$ in the regularity condition B of Theorem 8.6.)
\end{proof}

We conclude this section with the proof of the a-priori bound of Lemma \ref{L:AprioriBound}.
\begin{proof}[Proof of Lemma \ref{L:AprioriBound}]
We start by establishing some useful a-priori bounds on $c_k^i$ and $w_k^i$. The unimportant finite constant $C<\infty$ may change from line to line. We first observe that
\begin{eqnarray*}
| c_{k+1}^i  | &\leq&   | c_{k}^i | + \alpha \left| y_k - g_{\theta_k}^N(x_k)  \right| \frac{1}{N} | \sigma (w^i_k \cdot x_k) | \notag \\
&\leq&  | c_{k}^i | + \frac{ \alpha C  | y_k |  }{N} +  \frac{C}{N^2} \sum_{i=1}^N | c_k^i |,
\end{eqnarray*}
where to derive the  last line we used the definition of $g_{\theta_k}^N(x)$ via (\ref{Eq:NN}) and the uniform boundedness assumption on $\sigma$.  Then, we subsequently obtain that
\begin{eqnarray*}
| c_{k}^i | &=& | c_{0}^i |  + \sum_{j = 1}^k [ | c_{j}^i | - | c_{j-1}^i | ] \notag \\
&\leq& | c_{0}^i |  + \sum_{j=1}^k  \frac{ \alpha C  | y_{j-1} |  }{N}  +  \frac{C}{N^2} \sum_{j=1}^k \sum_{i=1}^N | c_{j-1}^i | .
\end{eqnarray*}
This implies that
\begin{eqnarray*}
\frac{1}{N} \sum_{i=1}^N | c_{k}^i | &\leq& \frac{1}{N} \sum_{i=1}^N | c_{0}^i |  + \sum_{j=1}^k  \frac{ \alpha C  | y_{j-1} |  }{N}  +  \frac{C}{N^2} \sum_{j=1}^k \sum_{i=1}^N | c_{j-1}^i |,
\end{eqnarray*}
Let us now define $m_{k}^{N}=\frac{1}{N} \sum_{i=1}^N | c_{k}^i |$ and $b_{k}^{N}=\frac{1}{N} \sum_{i=1}^N | c_{0}^i |  + \sum_{j=1}^k  \frac{ \alpha C  | y_{j-1} |  }{N}$. Then we have
\begin{eqnarray*}
m_{k}^{N} &\leq& b_{k}^{N}  +  \frac{C}{N} \sum_{j=1}^k m_{j}^{N},
\end{eqnarray*}
which by the discrete Gronwall lemma gives the bound
\begin{eqnarray*}
m_{k}^{N} &\leq& b_{k}^{N}  +  \frac{C}{N} \sum_{j=1}^k b_{j}^{N}e^{C\frac{j-i}{N}}\leq b_{k}^{N}  +  \frac{C}{N} \sum_{j=1}^k b_{j}^{N}
\end{eqnarray*}
for a possibly different constant that may depend on $T$, where the relation $k/N\leq T$ was used in the last step. Going back now to the bound for $c_{k}^{i}$ we obtain
\begin{eqnarray*}
| c_{k}^i | &\leq& | c_{0}^i |  + b_{k}^{N}  +  \frac{C}{N} \sum_{j=1}^k m_{j}^{N} .
\end{eqnarray*}
Raising this to power $1\leq p\leq 4$, we have for a constant $C_{p}$ that may depend on $p$
\begin{eqnarray*}
| c_{k}^i |^{p} &\leq& C_{p}\left[| c_{0}^i |^{p}  + |b_{k}^{N}|^{p}  +  \frac{1}{N^{p}} \left|\sum_{j=1}^k b_{j}^{N}\right|^{p} +  \frac{1}{N^{2p}} \left|\sum_{j=1}^k \sum_{i=1}^{j} b_{i}^{N}\right|^{p} \right].
\end{eqnarray*}

Let us bound now each of the terms on the right hand side of the last display. We have for some constant $C_{p}<\infty$ that may change from line to line
\begin{align}
|b_{k}^{N}|^{p}&\leq C_{p}\left[\frac{1}{N}\sum_{i=1}^{N}|c_{0}^{i}|^{p}+\frac{k^{p-1}}{N^{p}}\sum_{j=1}^{k}|y_{j-1}|^{p}\right]\nonumber\\
\frac{1}{N^{p}} \left|\sum_{j=1}^k b_{j}^{N}\right|^{p}&\leq \frac{k^{p-1}}{N^{p}} \sum_{j=1}^k \left|b_{j}^{N}\right|^{p}\leq  C_{p}\frac{k^{p-1}}{N^{p}} \sum_{j=1}^k \left[\frac{1}{N}\sum_{i=1}^{N}|c_{0}^{i}|^{p}+\frac{j^{p-1}}{N^{p}}\sum_{i=1}^{j}|y_{i-1}|^{p}\right]\nonumber\\
\frac{1}{N^{2p}} \left|\sum_{j=1}^k \sum_{i=1}^{j} b_{i}^{N}\right|^{p} &\leq \frac{k^{p-1}}{N^{p}} \sum_{j=1}^k \frac{j^{p-1}}{N^{p}}\sum_{i=1}^{j} \left|b_{i}^{N}\right|^{p} \leq C_{p} \frac{k^{p-1}}{N^{p}} \sum_{j=1}^k \frac{j^{p-1}}{N^{p}}\sum_{i=1}^{j} \left[\frac{1}{N}\sum_{i=1}^{N}|c_{0}^{i}|^{p}+\frac{i^{p-1}}{N^{p}}\sum_{\lambda=1}^{i}|y_{\lambda-1}|^{p}\right]\nonumber
\end{align}

Plugging those bounds now in the previous bound for $| c_{k}^i |^{p}$, using the assumption $E|y_{i}|^{p}\leq C<\infty$ for all $i$ and all $p\in[1,4]$ and that $k/N\leq T$, we obtain that for all $i\in\mathbb{N}$ and all $k$ such that $k/N\leq T$, the bound
\begin{align}
\mathbb{E}|c_{k}^{i}|^{p}\leq C<\infty \label{Eq:Bound_c}
\end{align}
for some constant $C$ that may depend on $p,T$, and the bound on the activation function $\sigma$. We have also used the fact that $\mathbb{E} [ |c_{0}^{i}|^p ] < K_1$ due to $E[\exp( q  | c_0 | ) ] < K_2$ for some $0 < q < \infty$ in Assumption \ref{A:Assumption1}.
Now, we turn to the bound for $\parallel w^i_k \parallel$. We start with the bound (using Young's inequality)
\begin{align}
 \parallel w^i_{k+1} \parallel &\leq  \parallel w^i_k \parallel +  \frac{C}{N} \left( |y_k| +  \frac{1}{N}\sum_{i=1}^N | c^i_k | \right) | c^i_k|    | \sigma'(w^i_k \cdot x_k ) | \parallel x_k \parallel\nonumber\\
 &\leq \parallel w^i_k \parallel +   C \left(\frac{1}{N} |y_k|^{2} + \frac{1}{N^{2}}\sum_{i=1}^N | c^i_k |^{2} + \frac{1}{N} | c^i_k|^{2}   \parallel x_k \parallel^{2}\right)\nonumber\\
 &\leq \parallel w^i_k \parallel +   C \left(\frac{1}{N} |y_k|^{2} + \frac{1}{N^{2}}\sum_{i=1}^N | c^i_k |^{2}+ \frac{1}{N} | c^i_k|^{4}+  \frac{1}{N} \parallel x_k \parallel^{4}\right),\nonumber
\end{align}
for a constant $C<\infty$ that may change from line to line. Taking now expectation, using Assumption \ref{A:Assumption1}, the a-priori bound (\ref{Eq:Bound_c}) and the fact that $k/N\leq T$ we obtain
\begin{align*}
\mathbb{E}\parallel w^i_k \parallel\leq C<\infty, 
\end{align*}
for all $i\in\mathbb{N}$ and all $k$ such that $k/N\leq T$, concluding the proof of the lemma.
\end{proof}

\section{Identification of the Limit} \label{Identification}
We consider the evolution of the empirical measure $\nu^N_k$ via test functions $f \in C^{2}_{b}(\mathbb{R}^{1+d})$. A Taylor expansion yields
\begin{eqnarray}
\la f , \nu^N_{k+1} \ra - \la f , \nu^N_k \ra &=& \frac{1}{N} \sum_{i=1}^N \bigg{(} f(c^i_{k+1}, w^i_{k+1} ) -  f(c^i_{k}, w^i_{k} )  \bigg{)} \notag \\
&=& \frac{1}{N} \sum_{i=1}^N \partial_c f(c^i_{k}, w^i_{k} ) ( c^i_{k+1} -  c^i_{k} )  + \frac{1}{N} \sum_{i=1}^N \nabla_w  f(c^i_{k}, w^i_{k} )  ( w^i_{k+1} -  w^i_{k} ) \notag \\
&+& \frac{1}{N} \sum_{i=1}^N \partial^{2}_{c} f(\bar c^i_{k},  \bar w^i_{k} ) ( c^i_{k+1} -  c^i_{k} )^2  + \frac{1}{N} \sum_{i=1}^N ( c^i_{k+1} -  c^i_{k} )\nabla_{cw}  f(\bar c^i_{k}, \bar w^i_{k} )( w^i_{k+1} -  w^i_{k} )    \notag \\
&+& \frac{1}{N} \sum_{i=1}^N ( w^i_{k+1} -  w^i_{k} )^{\top}\nabla^{2}_{w} f(\bar c^i_{k}, \bar w^i_{k} ) ( w^i_{k+1} -  w^i_{k} ),\notag
\end{eqnarray}
for points $\bar c^{i}_{k}, \bar w^{i}_{k}$ in the segments connecting $c^i_{k+1}$ with $c^i_{k}$ and  $w^i_{k+1}$ with $w^i_{k}$, respectively.
Notice now that the uniform bounds of Lemma \ref{L:AprioriBound} and the relation (\ref{Eq:SGD}) imply that as $N$ gets large
\begin{eqnarray}
\la f , \nu^N_{k+1} \ra - \la f , \nu^N_k \ra &=&  \frac{1}{N^2} \sum_{i=1}^N \partial_c f(c^i_{k}, w^i_{k} )  \alpha (y_k - g_{\theta_k}^N(x_k) )  \sigma (w^i_k \cdot x_k)   \notag \\
&+& \frac{1}{N^2} \sum_{i=1}^N   \alpha (y_k - g_{\theta_k}^N(x_k) )  c^i_k \sigma' (w^i_k \cdot x_k) \nabla_w  f(c^i_{k}, w^i_{k} )\cdot x_{k} + O_{p}\left(N^{-2}\right).\notag
\end{eqnarray}

The term $O_{p}\left(N^{-2}\right)$\footnote{ Recall that when we write $Z=O_{p}(b)$ we mean that $Z/b$ is stochastically bounded.} is a result of $f \in C^{2}_{b}$, the bounds from Lemma \ref{L:AprioriBound} as well as the moment bounds on $(x_{k},y_{k})$ from Assumption \ref{A:Assumption1}. 
We next define the drift and martingale components:
\begin{eqnarray}
D^{1,N}_k &=& \frac{1}{N} \int_{\mathcal{X}\times\mathcal{Y}}   \alpha \big{(} y -  \la c \sigma(w \cdot x),  \nu^N_k \ra \big{)} \la \sigma(w \cdot x) \partial_c f, \nu_k^N \ra \pi(dx,dy), \notag \\
D^{2,N}_k &=&  \frac{1}{N} \int_{\mathcal{X}\times\mathcal{Y}}   \alpha \big{(} y -  \la c \sigma(w \cdot x),  \nu^N_k \ra \big{)} \la c  \sigma'(w \cdot x) x \cdot \nabla_w f, \nu_k^N \ra  \pi(dx,dy), \notag \\
M^{1,N}_k &=&  \frac{1}{N} \alpha \big{(} y_k -  \la c \sigma(w \cdot x_k),  \nu^N_k \ra \big{)} \la \sigma(w \cdot x_k) \nabla_c f, \nu_k^N \ra   - D^{1,N}_k, \notag \\
M^{2,N}_k &=& \frac{1}{N} \alpha \big{(} y_k -  \la c \sigma(w \cdot x_k),  \nu^N_k \ra \big{)} \la c  \sigma'(w \cdot x_k) x \cdot \nabla_w f, \nu_k^N \ra  - D^{2,N}_k.\notag
\end{eqnarray}

Combining the different terms together, we then obtain
\begin{eqnarray*}
\la f , \nu^N_{k+1} \ra - \la f , \nu^N_k \ra &=&  D^{1,N}_k + D^{2,N}_k + M^{1,N}_k + M^{2,N}_k + O_{p}\left(N^{-2}\right).
\end{eqnarray*}

Next, we define the scaled versions of $D^{1,N}, D^{2,N}, M^{1,N}$ and $M^{2,N}$:
\textcolor{black}{\begin{eqnarray}
D^{1,N}(t) &=&  \sum_{k=0}^{ \floor*{N t}-1 } D^{1,N}_k, \qquad D^{2,N}(t) =  \sum_{k=0}^{ \floor*{N t} -1} D^{2,N}_k , \notag \\
M^{1,N}(t) &=& \sum_{k=0}^{ \floor*{N t}-1 } M^{1,N}_k, \qquad M^{2,N}(t) = \sum_{k=0}^{ \floor*{N t}-1 } M^{2,N}_k. \notag
\end{eqnarray}}

The scaled empirical measure satisfies, as $N$ grows,
\begin{eqnarray}
\la f, \mu^N_{t} \ra - \la f, \mu^N_{0} \ra &=& \int_0^t  \bigg{(} \int_{\mathcal{X}\times\mathcal{Y}}  \alpha \big{(} y -  \la c \sigma(w \cdot x),  \mu^N_{s} \ra \big{)} \la \sigma(w \cdot x) \nabla_c f, \mu^N_{s} \ra \pi(dx,dy)  \bigg{)} ds\notag \\
&+& \int_0^t \bigg{(} \int_{\mathcal{X}\times\mathcal{Y}}   \alpha \big{(} y -  \la c \sigma(w \cdot x),  \mu^N_{s} \ra \big{)} \la c  \sigma'(w \cdot x) x \cdot \nabla_w f, \mu^N_{s} \ra \pi(dx, dy) \bigg{)}ds \notag \\
&+& M^{1,N}(t) + M^{2,N}(t) + O_{p}\left(N^{-1}\right).\notag
\end{eqnarray}

In fact as we show below  $M^{1,N}(t)$ and $M^{2,N}(t)$ converge to $0$ in $L^2$ as $N \rightarrow \infty$.
\begin{lemma}\label{L:MartingaleTermsZero}
We have that
\begin{eqnarray}
\lim_{N \rightarrow \infty} \mathbb{E} \bigg{[} \bigg{(} M^{1,N}(t) \bigg{)}^2 \bigg{]} &=& 0, \notag \\
\lim_{N \rightarrow \infty} \mathbb{E} \bigg{[} \bigg{(} M^{2,N}(t) \bigg{)}^2 \bigg{]} &=& 0.\notag
\end{eqnarray}
\end{lemma}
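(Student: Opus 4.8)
The plan is to show that each $M^{j,N}(t)$ is an $L^2$-bounded martingale-type sum whose increments $M^{j,N}_k$ are, by construction, mean-zero conditionally on $\mathcal{F}^N_k$, and then exploit orthogonality of the increments to turn the $L^2$ norm of the sum into a sum of $L^2$ norms of the individual increments, each of which is $O(N^{-2})$. Since there are at most $\lfloor Nt\rfloor \le NT$ increments, the sum is $O(N^{-1}) \to 0$. Concretely, first I would verify that $\mathbb{E}[M^{j,N}_k \mid \mathcal{F}^N_k] = 0$: this is immediate from the definitions, because $M^{j,N}_k$ is the difference between a functional evaluated at the fresh sample $(x_k,y_k)$ and its conditional expectation $D^{j,N}_k = \mathbb{E}[\,\cdot\mid \mathcal{F}^N_k]$, using that $(x_k,y_k)$ is drawn independently of $\mathcal{F}^N_k$ from $\pi$. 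Consequently, for $k \ne \ell$, $\mathbb{E}[M^{j,N}_k M^{j,N}_\ell] = 0$ (condition on the larger index), so
\[
\mathbb{E}\Big[\big(M^{j,N}(t)\big)^2\Big] = \sum_{k=0}^{\lfloor Nt\rfloor - 1} \mathbb{E}\Big[\big(M^{j,N}_k\big)^2\Big].
\]

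Next I would bound $\mathbb{E}[(M^{j,N}_k)^2]$ by $C/N^2$ uniformly in $k \le TN$ and $i$. Each $M^{j,N}_k$ carries an explicit prefactor $1/N$, so it suffices to show the remaining factors have bounded second moment. For $M^{1,N}_k$, the relevant factor is $\alpha(y_k - \langle c\sigma(w\cdot x_k),\nu^N_k\rangle)\langle \sigma(w\cdot x_k)\nabla_c f,\nu^N_k\rangle$ minus its conditional mean; since $\sigma$ and $\nabla_c f$ are bounded (Assumption \ref{A:Assumption1} and $f \in C^2_b$), one has $|\langle \sigma(w\cdot x_k)\nabla_c f,\nu^N_k\rangle| \le C$ and $|\langle c\sigma(w\cdot x_k),\nu^N_k\rangle| \le \frac{C}{N}\sum_i |c^i_k|$, whose second moment is bounded by Lemma \ref{L:AprioriBound} (more precisely by the moment bound \eqref{Eq:Bound_c} with $p=2$, together with Jensen/Cauchy-Schwarz to pass from $\frac1N\sum_i|c^i_k|$ to $\frac1N\sum_i|c^i_k|^2$); combined with $\mathbb{E}|y_k|^2 \le C$, this gives a bounded second moment, and the conditional-mean subtraction only helps (it can only decrease the $L^2$ norm, or one bounds it crudely by the same constants). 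For $M^{2,N}_k$ the factor is $\alpha(y_k - \langle c\sigma(w\cdot x_k),\nu^N_k\rangle)\langle c\sigma'(w\cdot x_k)x_k\cdot\nabla_w f,\nu^N_k\rangle$; here the extra ingredients are $|\langle c\sigma'(w\cdot x_k)x_k\cdot\nabla_w f,\nu^N_k\rangle| \le \|x_k\|\,\|\nabla_w f\|_\infty \cdot \frac{C}{N}\sum_i|c^i_k|$, so the second moment picks up $\mathbb{E}[\|x_k\|^2 (\frac1N\sum_i|c^i_k|)^2]$, which is controlled by Cauchy-Schwarz together with $\mathbb{E}\|x_k\|^4 \le C$ (Assumption \ref{A:Assumption1}) and $\mathbb{E}(\frac1N\sum_i|c^i_k|)^4 \le C$ (again from \eqref{Eq:Bound_c} with $p=4$ and Jensen). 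In both cases the net bound is $\mathbb{E}[(M^{j,N}_k)^2] \le C/N^2$ uniformly in $k \le TN$.

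Putting the two pieces together,
\[
\mathbb{E}\Big[\big(M^{j,N}(t)\big)^2\Big] = \sum_{k=0}^{\lfloor Nt\rfloor - 1}\mathbb{E}\Big[\big(M^{j,N}_k\big)^2\Big] \le \lfloor Nt\rfloor \cdot \frac{C}{N^2} \le \frac{CT}{N} \xrightarrow[N\to\infty]{} 0,
\]
which is exactly the claim for both $j=1$ and $j=2$. The only mildly delicate point—and the one I would be most careful with—is the uniform-in-$k$ second-moment bound on the terms involving $\frac1N\sum_i|c^i_k|$ and the cross term $\|x_k\|\cdot\frac1N\sum_i|c^i_k|$ in $M^{2,N}_k$: this is where Lemma \ref{L:AprioriBound}'s moment bounds (specifically \eqref{Eq:Bound_c} for $p=2$ and $p=4$) and the fourth-moment assumption on the data are genuinely needed, rather than mere boundedness. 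Everything else (orthogonality of martingale increments, the $1/N$ prefactor, and the counting of $\lfloor Nt\rfloor$ terms) is routine.
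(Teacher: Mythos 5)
Your proposal is correct and follows essentially the same route as the paper: orthogonality of the conditionally centered increments reduces the second moment of the sum to a single sum of $\lfloor Nt\rfloor$ terms, each bounded by $C/N^2$ via the boundedness of $\sigma$, $f\in C^2_b$, the moment bounds on $(x_k,y_k)$, and the a priori bounds of Lemma \ref{L:AprioriBound}. Your write-up is in fact somewhat more explicit than the paper's about exactly which moment bounds control the terms involving $\frac{1}{N}\sum_i |c^i_k|$ and $\|x_k\|$, but the argument is the same.
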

\begin{proof}
First, notice that
\textcolor{black}{\begin{eqnarray}
&\phantom{.}& \mathbb{E} \bigg{[} \bigg{(} \sum_{k=0}^{ \floor*{N t}-1 }  \frac{1}{N} \alpha \big{(} y_k -  \la c \sigma(w \cdot x_k),  \nu^N_k \ra \big{)} \la \sigma(w \cdot x_k) \partial_c f, \nu_k^N \ra   - D^{1,N}_k \bigg{)}^2 \bigg{]} \notag \\
&=&     \sum_{j, k=0}^{ \floor*{N t}-1 } \mathbb{E} \bigg{[} \bigg{(} \frac{1}{N} \alpha \big{(} y_k -  \la c \sigma(w \cdot x_k),  \nu^N_k \ra \big{)} \la \sigma(w \cdot x_k) \partial_c f, \nu_k^N \ra   - D^{1,N}_k \bigg{)} \notag \\
&\times&  \bigg{(} \frac{1}{N} \alpha \big{(} y_j -  \la c \sigma(w \cdot x_j),  \nu^N_j \ra \big{)} \la \sigma(w \cdot x_j) \partial_c f, \nu_j^N \ra   - D^{1,N}_j \bigg{)} \bigg{]}
\label{DoubleSummationM}
\end{eqnarray}}

Let $\mathcal{F}_k^N$ be the $\sigma-$algebra generated by $(c^{i}_{0},w^{i}_{0})_{i=1}^{N}$ and $(x_{j}, y_{j})_{j=0}^{k-1}$. If $j > k$, then
\begin{eqnarray}
&\phantom{.}& \mathbb{E} \bigg{[} \bigg{(} \frac{1}{N} \alpha \big{(} y_k -  \la c \sigma(w \cdot x_k),  \nu^N_k \ra \big{)} \la \sigma(w \cdot x_k) \partial_c f, \nu_k^N \ra   - D^{1,N}_k \bigg{)} \notag \\
&\times&  \bigg{(} \frac{1}{N} \alpha \big{(} y_j -  \la c \sigma(w \cdot x_j),  \nu^N_j \ra \big{)} \la \sigma(w \cdot x_j) \partial_c f, \nu_j^N \ra   - D^{1,N}_j \bigg{)} \bigg{]}  \notag \\
&=&  \mathbb{E} \bigg{[} \bigg{(} \frac{1}{N} \alpha \big{(} y_k -  \la c \sigma(w \cdot x_k),  \nu^N_k \ra \big{)} \la \sigma(w \cdot x_k) \partial_c f, \nu_k^N \ra   - D^{1,N}_k \bigg{)} \notag \\
&\times&  \mathbb{E} \bigg{[} \bigg{(} \frac{1}{N} \alpha \big{(} y_j -  \la c \sigma(w \cdot x_j),  \nu^N_j \ra \big{)} \la \sigma(w \cdot x_j) \partial_c f, \nu_j^N \ra   - D^{1,N}_j \bigg{)}  \bigg{|} \mathcal{F}_{j-1}^N \bigg{]}  \bigg{]} \notag \\
&=&  \mathbb{E} \bigg{[} \bigg{(} \frac{1}{N} \alpha \big{(} y_k -  \la c \sigma(w \cdot x_k),  \nu^N_k \ra \big{)} \la \sigma(w \cdot x_k) \partial_c f, \nu_k^N \ra   - D^{1,N}_k \bigg{)} \times 0 \bigg{]} \notag \\
&=& 0.\notag
\end{eqnarray}
Therefore, (\ref{DoubleSummationM}) reduces to

\begin{eqnarray}
&\phantom{.}& \mathbb{E} \bigg{[} \bigg{(} \sum_{k=0}^{ \floor*{N t}-1 }  \frac{1}{N} \alpha \big{(} y_k -  \la c \sigma(w \cdot x_k),  \nu^N_k \ra \big{)} \la \sigma(w \cdot x_k) \partial_c f, \nu_k^N \ra   - D^{1,N}_k \bigg{)}^2 \bigg{]} \notag \\
&=&  \sum_{k=0}^{ \floor*{N t}-1 }  \mathbb{E} \bigg{[} \bigg{(} \frac{1}{N} \alpha \big{(} y_k -  \la c \sigma(w \cdot x_k),  \nu^N_k \ra \big{)} \la \sigma(w \cdot x_k) \partial_c f, \nu_k^N \ra   - D^{1,N}_k \bigg{)}^2 \bigg{]}.
  \label{SingleSummationM}
\end{eqnarray}

Using (\ref{SingleSummationM}), we have that
\begin{eqnarray}
\mathbb{E} \bigg{[} \bigg{(} M^{1,N}(t) \bigg{)}^2 \bigg{]} &=& \mathbb{E} \bigg{[} \bigg{(} \sum_{k=0}^{ \floor*{N t} -1}  \frac{1}{N} \alpha \big{(} y_k -  \la c \sigma(w \cdot x_k),  \nu^N_k \ra \big{)} \la \sigma(w \cdot x_k) \partial_c f, \nu_k^N \ra   - D^{1,N}_k \bigg{)}^2 \bigg{]} \notag \\
&=&   \sum_{k=0}^{ \floor*{N t} -1} \mathbb{E} \bigg{[} \bigg{(} \frac{1}{N} \alpha \big{(} y_k -  \la c \sigma(w \cdot x_k),  \nu^N_k \ra \big{)} \la \sigma(w \cdot x_k) \partial_c f, \nu_k^N \ra   - D^{1,N}_k \bigg{)}^2 \bigg{]} \notag \\
&\leq& \frac{2}{N^2} \sum_{k=0}^{ \floor*{N t} -1}  \mathbb{E} \bigg{[} \bigg{(}  \alpha \big{(} y_k -  \la c \sigma(w \cdot x_k),  \nu^N_k \ra \big{)} \la \sigma(w \cdot x_k) \partial_c f, \nu_k^N \ra \bigg{)}^2 \bigg{]}  \notag \\
&+& \frac{2}{N^2} \sum_{k=0}^{ \floor*{N t}-1 }  \mathbb{E} \bigg{[} \bigg{(}  \int_{\mathcal{X}\times\mathcal{Y}}   \alpha \big{(} y -  \la c \sigma(w \cdot x),  \nu^N_k \ra \big{)} \la \sigma(w \cdot x) \partial_c f, \nu_k^N \ra \pi(dx,dy) \bigg{)}^2 \bigg{]}  \notag \\
&\leq& \frac{C}{N^2}  \floor*{N t}. \notag
\end{eqnarray}

The final inequality comes from the bounds proven in Section \ref{RelativeCompactness} and Assumption \ref{A:Assumption1}. A similar bound can be also established for $\mathbb{E} \bigg{[} \bigg{(} M^{2,N}(t) \bigg{)}^2 \bigg{]}$. The result directly follows.
\end{proof}

Let $\pi^N$ be the probability measure of a convergent subsequence of $\left(\mu^N\right)_{0\leq t\leq T}$. Each $\pi^N$ takes values in the set of probability measures $\mathcal{M} \big{(} D_E([0,T]) \big{)}$. Relative compactness, proven in Section \ref{RelativeCompactness}, implies that there is a subsequence $\pi^{N_k}$ which weakly converges. We must prove that any limit point $\pi$ of a convergent subsequence $\pi^{N_k}$ will satisfy the evolution equation (\ref{EvolutionEquationIntroduction}).
\begin{lemma}
Let $\pi^{N_k}$ be a convergent subsequence with a limit point $\pi$. Then $\pi$ is a Dirac measure concentrated on $\bar \mu \in D_E([0,T])$ and $\bar \mu$ satisfies the measure evolution equation (\ref{EvolutionEquationIntroduction}).
\end{lemma}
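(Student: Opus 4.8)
The plan is to combine the relative compactness from Section~\ref{RelativeCompactness} with the martingale estimate of Lemma~\ref{L:MartingaleTermsZero} to show that, along the convergent subsequence, the evolution equation~(\ref{EvolutionEquationIntroduction}) is satisfied in the limit, and then argue that the limiting measure on path space is a Dirac mass. Fix a test function $f\in C^2_b(\mathbb{R}^{1+d})$ and, for $\nu\in D_E([0,T])$, define the functional
\begin{eqnarray*}
F_f(\nu,t) &=& \la f,\nu_t\ra - \la f,\nu_0\ra - \int_0^t\!\!\int_{\mathcal{X}\times\mathcal{Y}}\alpha\big(y-\la c'\sigma(w'\cdot x),\nu_s\ra\big)\la\sigma(w\cdot x)\partial_c f,\nu_s\ra\,\pi(dx,dy)\,ds\\
& &-\int_0^t\!\!\int_{\mathcal{X}\times\mathcal{Y}}\alpha\big(y-\la c'\sigma(w'\cdot x),\nu_s\ra\big)\la c\sigma'(w\cdot x)x\cdot\nabla_w f,\nu_s\ra\,\pi(dx,dy)\,ds.
\end{eqnarray*}
From the expansion carried out just before Lemma~\ref{L:MartingaleTermsZero} we have $F_f(\mu^N,t) = M^{1,N}(t)+M^{2,N}(t)+O_p(N^{-1})$, so by Lemma~\ref{L:MartingaleTermsZero} (and the moment bounds controlling the $O_p(N^{-1})$ term in $L^2$) we get $\mathbb{E}\,|F_f(\mu^N,t)|\to 0$, and in fact $\mathbb{E}\,\big[\,|F_f(\mu^N,t)|\wedge 1\,\big]\to 0$, as $N\to\infty$.

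Next I would show that the map $\nu\mapsto |F_f(\nu,t)|\wedge 1$ is continuous (or at least continuous on a set of full $\pi$-measure) as a function on $D_E([0,T])$, so that $\mathbb{E}^{\pi^{N_k}}\big[|F_f(\cdot,t)|\wedge 1\big]\to\mathbb{E}^{\pi}\big[|F_f(\cdot,t)|\wedge 1\big]$ by weak convergence; the subtlety is that evaluation $\nu\mapsto\nu_t$ is only continuous at paths continuous at $t$, but since the limiting paths are continuous (each $\mu^N$ has jumps of size $O(1/N)$ in the $\la f,\cdot\ra$ pairing by the Taylor bound in Lemma~\ref{L:regularity}, so any subsequential limit is supported on $C_E([0,T])$), this causes no real difficulty—one restricts $t$ to the co-countable set of continuity points, or simply notes that the limit is concentrated on continuous paths. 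Combining with the previous paragraph gives $\mathbb{E}^{\pi}\big[|F_f(\cdot,t)|\wedge 1\big]=0$, hence $F_f(\nu,t)=0$ for $\pi$-a.e.\ $\nu$. Taking a countable dense family of test functions $f$ in $C^2_b$ (dense in the topology that determines the evolution equation, using boundedness of $\sigma,\sigma'$ and the moment bounds to extend from the dense family to all of $C^2_b$) and a countable dense set of times $t$, and then using continuity of $t\mapsto F_f(\nu,t)$ for paths in $C_E([0,T])$, we conclude that $\pi$-a.e.\ $\nu$ satisfies~(\ref{EvolutionEquationIntroduction}) for all $f\in C^2_b$ and all $t\in[0,T]$.

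Finally, to upgrade "$\pi$ is supported on solutions of~(\ref{EvolutionEquationIntroduction})" to "$\pi$ is a Dirac mass", I invoke the uniqueness result proven in Section~\ref{Uniqueness}: the evolution equation~(\ref{EvolutionEquationIntroduction}) with initial condition $\bar\mu_0$ has a unique solution $\bar\mu\in D_E([0,T])$. (The initial condition is pinned down because $\mu_0^N\overset{d}\to\bar\mu_0$, as noted under Assumption~\ref{A:Assumption1}, so any limit point has $\nu_0=\bar\mu_0$ a.s.) Since $\pi$-a.e.\ $\nu$ equals this unique $\bar\mu$, we have $\pi=\delta_{\bar\mu}$. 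The main obstacle is the continuity/passage-to-the-limit argument in the middle paragraph: one must carefully check that the nonlinear terms $\la c'\sigma(w'\cdot x),\nu_s\ra$ and the pairings against $\partial_c f$, $\nabla_w f$ pass to the limit under weak convergence in $D_E([0,T])$ despite the test integrands $c\sigma(w\cdot x)$, $c\sigma'(w\cdot x)x^j$ being unbounded in $(c,w)$. This is handled using the uniform compact-containment bound of Lemma~\ref{L:CompactContainment} together with the a priori moment bounds of Lemma~\ref{L:AprioriBound} and a truncation argument (approximate the unbounded integrands by bounded ones, control the tail uniformly in $N$), which is exactly the point where the paper's avoidance of an a priori Lipschitz/boundedness hypothesis on the network gradient makes the argument nontrivial.
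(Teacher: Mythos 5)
Your proposal follows essentially the same route as the paper: define the functional $F$ measuring the defect in the evolution equation, show $\mathbb{E}[F(\mu^N)]\to 0$ using the expansion and Lemma \ref{L:MartingaleTermsZero}, pass to the limit along the convergent subsequence by continuity of $F$, and obtain the Dirac-mass conclusion from the uniqueness result of Section \ref{Uniqueness}. The only differences are cosmetic or in your favor: the paper additionally multiplies by factors $\la g_1,\mu_{s_1}\ra\cdots\la g_p,\mu_{s_p}\ra$ in the martingale-problem style (not essential here since the limit is deterministic), while you are more explicit about two points the paper's proof glosses over, namely the capping by $\wedge 1$ (the paper's claim that $F(\mu^N)$ is uniformly bounded is not literally true because of the unbounded integrands $c\sigma(w\cdot x)$ and $c\sigma'(w\cdot x)x$) and the restriction to continuity points of the Skorokhod paths.
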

\begin{proof}
We define a map $F(\mu): D_{E}([0,T]) \rightarrow \mathbb{R}_{+}$ for each $t \in [0,T]$, $f \in C^{2}_{b}(\mathbb{R}^{1+d})$, $g_{1},\cdots,g_{p}\in C_{b}(\mathbb{R}^{1+d})$ and $0\leq s_{1}<\cdots< s_{p}\leq t$.
\begin{eqnarray*}
F(\mu) &=&  \bigg{|} \left(\la f, \mu_t \ra - \la f,  \mu_0 \ra - \int_0^t   \bigg{(} \int_{\mathcal{X}\times\mathcal{Y}}   \alpha \big{(} y -  \la c' \sigma(w' \cdot x),  \mu_s \ra \big{)} \la \sigma(w \cdot x) \partial_c f, \mu_s \ra   \pi(dx,dy) \bigg{)} ds\right.\notag \\
&+& \left. \int_0^t  \bigg{(} \int_{\mathcal{X}\times\mathcal{Y}}   \alpha \big{(} y -  \la c' \sigma(w' \cdot x), \mu_s \ra \big{)} \la c \sigma'(w \cdot x) x \cdot \nabla_w f, \mu_s \ra  \pi(dx,dy)\bigg{)}ds\right)\times\nonumber\\
& &\qquad \times\la g_{1},\mu_{s_{1}}\ra\times\cdots\times \la g_{p},\mu_{s_{p}}\ra\bigg{|} .
\label{EvolutionEquation2}
\end{eqnarray*}
Then, by the proof of Lemma \ref{L:MartingaleTermsZero}, we obtain for large $N$
\begin{eqnarray}
\mathbb{E}_{\pi^N} [ F(\mu) ] &=& \mathbb{E} [ F( \mu^N ) ] \notag \\
&=& \mathbb{E} \left| \left(M^{1,N}(t) + M^{2,N}(t) + O(N^{-1})\right)\prod_{i=1}^{p} \la g_{i},\mu^{N}_{s_{i}}\ra \right| \notag \\
&\leq&  \mathbb{E}[ | M^{1,N}(t) |  ]+  \mathbb{E}[ | M^{2,N}(t)  |  ]+ O(N^{-1})   \notag \\
&\leq& \mathbb{E}[ ( M^{1,N}(t) )^2 ]^{1/2}+  \mathbb{E}[ ( M^{2,N}(t)  )^2 ]^{1/2} + O(N^{-1})  \notag \\
&\leq& C \left(\frac{1}{\sqrt{N}}+\frac{1}{N}\right).\notag
\end{eqnarray}
Therefore,
\begin{eqnarray}
\lim_{N \rightarrow \infty} \mathbb{E}_{\pi^N} [ F(\mu) ] = 0.\notag
\end{eqnarray}
Since $F(\cdot)$ is continuous and $F( \mu^N)$ is uniformly bounded (due to the uniform boundedness results of Section \ref{RelativeCompactness}),
\begin{eqnarray}
 \mathbb{E}_{\pi} [ F(\mu) ] = 0.\notag
\end{eqnarray}
Since this holds for each $t \in [0,T]$, $f \in C^{2}_{b}(\mathbb{R}^{1+d})$ and $g_{1},\cdots,g_{p}\in C_{b}(\mathbb{R}^{1+d})$, $\bar \mu$ satisfies the evolution equation (\ref{EvolutionEquationIntroduction}).
\end{proof}

It remains to prove that the evolution equation (\ref{EvolutionEquationIntroduction}) has a unique solution. This is the content of Section \ref{Uniqueness}.

\section{Uniqueness} \label{Uniqueness}
We prove uniqueness of a solution to  the evolution equation (\ref{EvolutionEquationIntroduction}). We will set up a Picard type of iteration and prove that it has a unique fixed point through a contraction mapping. We start by noticing that we can write
\begin{align}
\la f, \bar \mu_t \ra  &= \la f, \bar \mu_0 \ra + \int_0^t   \bigg{(} \int_{\mathcal{X}\times\mathcal{Y}}   \alpha \big{(} y -  \la c' \sigma(w'\cdot x),  \bar \mu_s \ra \big{)} \la \sigma(w \cdot x) \partial_c f, \bar \mu_s \ra   \pi(dx,dy)\bigg{)} ds \notag \\
&+ \int_0^t \bigg{(} \int_{\mathcal{X}\times\mathcal{Y}}  \alpha \big{(} y -  \la c' \sigma(w' \cdot x), \bar  \mu_s \ra \big{)} \la c \sigma'(w \cdot x) x \nabla_w f, \bar \mu_s \ra \pi(dx, dy) \bigg{)}   ds. \nonumber\\
&= \la f, \bar \mu_0 \ra+\int_{0}^{t} \la G(z, Q(\bar \mu_{s},\cdot))\cdot\nabla f,\bar{\mu}_{s}\ra ds,
\label{EvolutionEquation1}
\end{align}
where for $z=(c,w_{1},\cdots,w_{d})\in\mathbb{R}^{1+d}$, $Q(\bar \mu,x)=\la c \sigma(w \cdot x), \bar \mu \ra$ we have
\[
G(z,Q(\bar \mu,\cdot))=(G_{1}(z,Q(\bar \mu,\cdot)), G_{2}(z,Q(\bar \mu,\cdot)))\in \mathbb{R}^{1+d}
\]
 with
\begin{align*}
G_{1}(z,Q(\bar \mu,\cdot))&= \int_{\mathcal{X}\times\mathcal{Y}}\alpha ( y - Q(\bar \mu,x) )  \sigma(w \cdot x) \pi(dx,dy)\in\mathbb{R}\nonumber\\
G_{2}(z,Q(\bar \mu,\cdot))&= \int_{\mathcal{X}\times\mathcal{Y}}\alpha ( y - Q(\bar \mu,x) )  c \sigma'(w \cdot x) x \pi(dx,dy)\in\mathbb{R}^{d}.\notag
\end{align*}

We remark here that a solution to (\ref{EvolutionEquation1}), $\bar{\mu}_{\cdot}$, is associated to the nonlinear random process $Z_{t}$ (see for example \cite{Kolokoltsov}) satisfying  the random ordinary differential equation (ODE)
\begin{align}
Z_{t}&=Z_{0}+\int_{0}^{t} G(Z_{s},Q(\bar \mu_{s},\cdot))ds\nonumber\\
Z_{0}&\sim  \bar \mu(0,c,w)\nonumber\\
\bar{\mu}_{t}&=\text{Law}(Z_{t})\label{Eq:RandomODE}
\end{align}
This ODE is random due to the random initial  data.

Let us now define the following mappings. Let $F: D([0,T];\mathbb{R}) \mapsto D([0,T];M(\mathbb{R}^{1+d}))$ be such that for a path $(R_{t})_{t\in[0,T]}\in D([0,T];\mathbb{R})$, we have that $F(R_{\cdot})=\text{Law}(Y_{\cdot})$ where $Y_{\cdot}$ is given by
\begin{align}
Y_{t}&=Y_{0}+\int_{0}^{t} G(Y_{s},R_{s})ds\nonumber\\
Y_{0}&\sim \bar \mu(0,c,w).\notag
\end{align}

 Now, let us also define the map $L:D([0,T];M(\mathbb{R}^{1+d}))\mapsto D([0,T];\mathbb{R})$ taking a measure valued process $\mu_{t}$ and mapping it to $Q(\mu_{t},x)=L(\mu)$ where
\[
Q(\mu_{t},x)=\la  c \sigma(w \cdot x),  \mu_{t} \ra.
\]

Then, we consider the mapping $H:D([0,T];M(\mathbb{R}^{1+d}))\mapsto D([0,T];M(\mathbb{R}^{1+d}))$ defined via the composition of the mappings $F$ and $L$, we set $H=F\circ L$. Sometimes, in order to emphasize the dependence on $T$, we may write $H_{T}$ for $H$.

It is clear that if $(\mu_{t})_{t\in[0,T]}$ is a fixed point of $H$, then $\text{Law}(Z_{t})=H_{t}(\mu_{\cdot})$ is a solution to (\ref{EvolutionEquation1}). Conversely, if  $(Z_{t})_{t\in[0,T]}$ is a solution to (\ref{Eq:RandomODE}) then its law will be a fixed point of $H$, implying that $\text{Law}(Z_{t})=H_{t}(\mu)$. In addition, if $\mu$ is a weak measure valued solution to (\ref{EvolutionEquation1}), then it must be a fixed point of $H$ and thus satisfy  (\ref{Eq:RandomODE}),   proving our result.

Now, we need to show that $H$ is a contraction mapping for $t\in[0,T]$. The first step is to show that in studying the fixed point of $H$, we can in fact consider $H:C([0,T];M(\mathbb{R}^{1+d}))\mapsto C([0,T];M(\mathbb{R}^{1+d}))$. This will allow us to work in $C([0,T];M(\mathbb{R}^{1+d}))$ instead of working in the larger space $D([0,T];M(\mathbb{R}^{1+d}))$ streamlining some elements of the proof.

For this reason we first derive some a-priori bounds and study regularity for  $Z_{t}$ satisfying the random ODE given by (\ref{Eq:RandomODE}) where $\bar\mu_{t}$ is the probability measure of the parameters at time $t$. Denoting by $\mathbb{E}$ the expectation operator taken with respect to this measure (notice that here $(x,y)$ are considered to be integration variables) we essentially consider the following system of random ODE's.
\begin{eqnarray}
 c_t &=& c_{0} +\int_{0}^{t}\alpha \int_{\mathcal{X}\times\mathcal{Y}}   (y - \mathbb{E}[ c_s \sigma (w_s \cdot x)] )  \sigma (w_s \cdot x) \pi(dx,dy) ds, \notag \\
w_t &=& w_{0} +\int_{0}^{t}\alpha \int_{\mathcal{X}\times\mathcal{Y}}  (y - \mathbb{E}[ c_s \sigma( w_s \cdot x) ] ) c_s \sigma' (w_s \cdot x) x  \pi(dx,dy) ds. \notag \\
(c_0, w_0) &\sim& \bar \mu(0,c,w).
\label{SDEmain}
\end{eqnarray}

Lemma \ref{L:regularityLimitODE} shows that there is regularity in time and it also provides us with some useful a-priori uniform bounds.
\begin{lemma}\label{L:regularityLimitODE}
Let $1\leq p\leq 4$ and $T<\infty$ be given. Then, there are constants $C_{1},C_{2},C_{3}<\infty$, depending on $p$, such that
\[
 \sup_{t\in[0,T]}|c_{t}|^{p} \leq C_{1}\left(|c_{0}|^{p}+1+T^{p}C_{2}\right), \quad \mathbb{E}\sup_{t\in[0,T]}\parallel w_{t}\parallel^{p} \leq C_{1}\left(\mathbb{E}|c_{0}|^{p} + \mathbb{E}\parallel w_{0}\parallel^{p}+T^{p}C_{2}\right)
\]
and for every $0\leq s\leq t\leq T$ we have that
\[
|c_{t}-c_{s}|^{p} \leq C |t-s|^{p},\quad  \mathbb{E}\parallel w_{t}-w_{s}\parallel^{p}\leq C\left( \mathbb{E}|c_0|^{p}+1 \right) |t-s|^{p}.
\]
\end{lemma}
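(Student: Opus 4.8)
The plan is to prove all four estimates by working directly with the integral equations in \eqref{SDEmain}, exploiting the boundedness of $\sigma$ and $\sigma'$ together with the moment assumptions on $(x,y)$ from Assumption \ref{A:Assumption1} and the moment bounds on the initialization $(c_0,w_0)$. A key structural observation is that the $c$-equation is closed in a weak sense: the term $\mathbb{E}[c_s\sigma(w_s\cdot x)]$ is deterministic (an expectation/integration over the initialization and the data variables), and since $\sigma$ is bounded by some $\|\sigma\|_\infty$, we get $|\mathbb{E}[c_s\sigma(w_s\cdot x)]| \le \|\sigma\|_\infty\,\mathbb{E}|c_s|$. So the first step is to control $\mathbb{E}|c_t|$ (and then $\mathbb{E}|c_t|^p$) via a Gronwall argument, which then feeds into everything else.

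First I would bound $c_t$ pathwise. From \eqref{SDEmain},
\[
|c_t| \le |c_0| + \int_0^t \alpha\,\|\sigma\|_\infty \int_{\mathcal{X}\times\mathcal{Y}}\big(|y| + \|\sigma\|_\infty\,\mathbb{E}|c_s|\big)\pi(dx,dy)\,ds
\le |c_0| + C\int_0^t\big(1 + \mathbb{E}|c_s|\big)\,ds,
\]
using $\mathbb{E}|y|\le (\mathbb{E}|y|^4)^{1/4}<\infty$. Taking expectations and applying Gronwall's inequality gives $\sup_{t\le T}\mathbb{E}|c_t|\le C_T$; plugging this back into the pathwise bound yields $\sup_{t\le T}|c_t|\le |c_0| + C_T(1+T)$, and raising to the power $p$ (using $(a+b)^p\le 2^{p-1}(a^p+b^p)$) gives the first claimed estimate $\sup_{t\in[0,T]}|c_t|^p \le C_1(|c_0|^p + 1 + T^pC_2)$. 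To upgrade the Gronwall step to higher moments one can either iterate (bound $\mathbb{E}|c_t|^p$ directly via Jensen on the time integral: $\big(\int_0^t(1+\mathbb{E}|c_s|)ds\big)^p \le T^{p-1}\int_0^t(1+\mathbb{E}|c_s|)^p ds$ and Gronwall again) or simply note that the pathwise bound already controls all moments of $\sup_t|c_t|$ in terms of moments of $c_0$, which are finite by Assumption \ref{A:Assumption1} (the exponential moment bound on $|c_0|$ gives all polynomial moments).

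Next I would handle $w_t$. From \eqref{SDEmain},
\[
\|w_t\| \le \|w_0\| + \int_0^t \alpha\,\|\sigma'\|_\infty\int_{\mathcal{X}\times\mathcal{Y}}\big(|y| + \|\sigma\|_\infty\mathbb{E}|c_s|\big)\,|c_s|\,\|x\|\,\pi(dx,dy)\,ds,
\]
so $\sup_{t\le T}\|w_t\| \le \|w_0\| + C\sup_{s\le T}|c_s|\cdot\int_{\mathcal X\times\mathcal Y}(|y|+1)\|x\|\pi(dx,dy)\cdot T$ after pulling out $\sup_s|c_s|$ and the already-established bound on $\mathbb{E}|c_s|$. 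Now raise to the power $p\le 4$, take $\mathbb{E}$, and use Cauchy–Schwarz/Hölder to split $\mathbb{E}[\,\sup_s|c_s|^p \cdot (\int(|y|+1)\|x\|\pi)^p\,]$: the data integral is a deterministic finite constant because $\mathbb{E}\|x\|^4,\mathbb{E}|y|^4<\infty$ (for $p\le 4$, $\int(|y|+1)\|x\|\pi \le C$ by Hölder), and $\mathbb{E}\sup_s|c_s|^p \le C_1(\mathbb{E}|c_0|^p+1+T^pC_2)$ from the previous step. This gives the second estimate. For the time-regularity bounds, the $c$-increment is immediate: $|c_t - c_s| \le \int_s^t\alpha\|\sigma\|_\infty(\mathbb{E}|y|+\|\sigma\|_\infty\mathbb{E}|c_r|)dr \le C|t-s|$, hence $|c_t-c_s|^p\le C|t-s|^p$. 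For $w$, $\|w_t-w_s\| \le \int_s^t \alpha\|\sigma'\|_\infty(|y|+\|\sigma\|_\infty\mathbb{E}|c_r|)|c_r|\|x\|\,dr$ integrated against $\pi$, bounded by $C\sup_{r\le T}|c_r|\,(|t-s|)$ times the finite data integral; raising to power $p$, taking expectation, and using $\mathbb{E}\sup_r|c_r|^p \le C(\mathbb{E}|c_0|^p+1)$ gives $\mathbb{E}\|w_t-w_s\|^p \le C(\mathbb{E}|c_0|^p+1)|t-s|^p$.

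The main obstacle — such as it is — is keeping the dependence structure straight: one must be careful that $\mathbb{E}[c_s\sigma(w_s\cdot x)]$ in \eqref{SDEmain} is an expectation over the \emph{randomness of the initial condition and the integration variable} $x$, i.e. a deterministic quantity, so that the feedback term in the $c$-ODE is genuinely deterministic and the Gronwall argument closes on $\mathbb{E}|c_t|$ (or on $|c_t|$ pathwise after that). The only real estimate to watch is that all the data integrals $\int(|y|+1)^a\|x\|^b\pi(dx,dy)$ appearing are finite for the relevant exponents, which holds because $a+b\le 4$ throughout for $p\le 4$ and $\mathbb{E}\|x\|^4+\mathbb{E}|y|^4<\infty$; Hölder's inequality handles the mixed moments. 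Everything else is a routine Gronwall/Hölder bookkeeping exercise.
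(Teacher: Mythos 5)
Your argument is correct and follows essentially the same route as the paper: a Gronwall bound that closes the $c$-equation (using the boundedness of $\sigma$ so that the mean-field term is controlled by a moment of $c_s$), followed by feeding the resulting pathwise bound on $\sup_{t\le T}|c_t|$ into the $w$-equation and into the increment estimates. The only cosmetic difference is that the paper runs Gronwall directly on the $p$-th moment quantity $\int_{\mathcal{X}\times\mathcal{Y}}\mathbb{E}[|c_s\sigma(w_s\cdot x)|^p]\,\pi(dx,dy)$, whereas you run it on $\mathbb{E}|c_s|$ and raise the pathwise bound to the $p$-th power afterwards; the two are equivalent here because $\sigma$ is bounded.
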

\begin{proof}
Let's examine $c_t$ first and establish a bound on its growth. The constant $C$ may change from line to line and it may also depend upon the final time $T$ and on $p$.
\begin{eqnarray}
 c_t &=&  c_0+ \int_0^t \alpha \int_{\mathcal{X}\times\mathcal{Y}}     (y - \mathbb{E}[ c_s \sigma (w_s \cdot x)] )  \sigma (w_s \cdot x) \pi(dx,dy) ds. \notag \\
 c_t  \sigma(w_t \cdot x) &=&  \sigma(w_t \cdot x) c_0+ \sigma(w_t \cdot x) \int_0^t \alpha \int_{\mathcal{X}\times\mathcal{Y}}     (y - \mathbb{E}[ c_s \sigma (w_s \cdot x)] )  \sigma (w_s \cdot x)\pi(dx,dy)  ds. \notag \\
  | c_t  \sigma(w_t \cdot x) | &\leq& C | c_0 |+  C \int_0^t \int_{\mathcal{X}\times\mathcal{Y}}     | (y - \mathbb{E}[ c_s \sigma (w_s \cdot x)] )  |  \pi(dx,dy) ds. \label{Eq:c_randomODEbound}
\end{eqnarray}
We have used the fact that $\sigma(\cdot)$ is bounded. Now, we will use the facts that $\mathbb{E}[ | c_0 |^{p} ] < C$ and $x,y$ have finite moments when integrated against $\pi(dx,dy)$ via Assumption \ref{A:Assumption1}. For a potentially different constant $C<\infty$
\begin{eqnarray}
\mathbb{E} [ | c_t  \sigma(w_t \cdot x) |^{p}  ]&\leq& C + C \int_0^t \int_{\mathcal{X}\times\mathcal{Y}}  \mathbb{E}[ | c_s \sigma(w_s \cdot x ) |^{p} ]  \pi(dx,dy)   ds. \notag \\
   \int_{\mathcal{X}\times\mathcal{Y}}\mathbb{E} [ | c_t  \sigma(w_t \cdot x) |^{p} ] \pi(dx,dy)&\leq& C + C \int_0^t  \int_{\mathcal{X}\times\mathcal{Y}}  \mathbb{E}[ | c_s \sigma(w_s \cdot x) |^{p} ]  \pi(dx,dy)   ds. \notag
\end{eqnarray}
Therefore, by Gronwall's inequality, there exists a constant $C_{2}<\infty$ such that
\begin{eqnarray*}
 \int_{\mathcal{X}\times\mathcal{Y}}  \mathbb{E}[ | c_s \sigma(w_s \cdot x) |^{p} ]  \pi(dx,dy)  \leq C_{2},
 \label{BoundMomentC000011}
\end{eqnarray*}
for $0 \leq s \leq T$. Therefore, returning to (\ref{Eq:c_randomODEbound}) and recalling Assumption \ref{A:Assumption1} we get that uniformly in $t\in[0,T]$, there exist constants $C_{1},C_{2}<\infty$ such that
\begin{eqnarray*}
 \sup_{t\in[0,T]}|c_{t}|^{p} \leq C_{1}\left(|c_{0}|^{p}+1+T^{p} C_{2}\right).
\end{eqnarray*}

Let us now obtain the claimed bound on $\mathbb{E}\sup_{t\in[0,T]}\parallel w_{t}\parallel^{p}$. We obtain from (\ref{SDEmain}) and Assumption \ref{A:Assumption1} that
\begin{align}
\parallel w_t\parallel^{p} &\leq \parallel w_{0}\parallel^{p} +t^{p-1}\int_{0}^{t}\alpha^{p} \left(\int_{\mathcal{X}\times\mathcal{Y}}  (y - \mathbb{E}[ c_s \sigma( w_s \cdot x) ] ) c_s \sigma' (w_s \cdot x) x  \pi(dx,dy)\right)^{p} ds\nonumber\\
&\leq \parallel w_{0}\parallel^{p} + C t^{p-1}\int_{0}^{t} \left(\int_{\mathcal{X}\times\mathcal{Y}}  |y - \mathbb{E}[ c_s \sigma( w_s \cdot x) ] |^{2}\pi(dx,dy)\right)^{p/2} \left(\int_{\mathcal{X}\times\mathcal{Y}} |c_s|^{2} \parallel x\parallel^{2}  \pi(dx,dy)\right)^{p/2} ds\nonumber\\
&\leq \parallel w_{0}\parallel^{p} + C t^{p-1}\int_{0}^{t} |c_s|^{p} \left(\int_{\mathcal{X}\times\mathcal{Y}}  (|y|^{2} + \mathbb{E}[| c_s \sigma( w_s \cdot x)  |^{2}]\pi(dx,dy)\right)^{p/2} \left(\int_{\mathcal{X}\times\mathcal{Y}}  \parallel x\parallel^{2}  \pi(dx,dy)\right)^{p/2} ds\nonumber
\end{align}
 and the claimed bound follows by taking supremum over all $t\in[0,T]$, expectation and using the previously derived uniform bound for $c_{t}$.
%
%
Let us now prove the second statement of the lemma.  Similarly to the calculations above and using the uniform moment bounds on $c_{t}$ and $w_{t}$ together with Assumption \ref{A:Assumption1}, we have
\begin{align}
 |c_t-c_{s}|^{p}  &\leq C |t-s|^{p-1} \int_s^t  \int_{\mathcal{X}\times\mathcal{Y}}     |y - \mathbb{E}[ c_u \sigma (w_u \cdot x)] |^{p}   \pi(dx,dy) du \notag \\
& \leq C_{3} |t-s|^{p},\notag
\end{align}
 for some unimportant constant $C_{3}<\infty$. Using an analysis almost identical to the one that led to the bound for $\mathbb{E}\sup_{t\in[0,T]}\parallel w_{t}\parallel^{p}$ gives us the desired bound for $\mathbb{E}\parallel w_{t}-w_{s}\parallel^{p}$.
This concludes the proof of the lemma.
\end{proof}
As a consequence of the regularity result in Lemma \ref{L:regularityLimitODE}, (\ref{SDEmain}) is a continuous process. Therefore, we can prove a contraction in $C([0,T];M(\mathbb{R}^{1+d}))$ (instead of studying the process in the larger space $D([0,T];M(\mathbb{R}^{1+d}))$).

Let us define for notational convenience $C_T= C([0,T], \mathbb{R}^{1+d})$ and let $M_T$ be the set of probability measures on $C_T$. Consider an element $m\in M_{T}$. Motivated by the discussion before Lemma \ref{L:regularityLimitODE} let us set $\text{Law}(Y)=H(m_{\cdot})$, where, slightly abusing notation, $Y=(c,w)$ with
\begin{eqnarray}
c_t &=&c_0 +    \int_0^t \int_{\mathcal{X}\times\mathcal{Y}}  \alpha  \bigg{(} y - \la G_{s,x},m \ra \bigg{)}  \sigma ( w_s \cdot x) \pi(dx,dy) ds, \notag \\
w_t &=& w_0 +  \int_0^t   \int_{\mathcal{X}\times\mathcal{Y}} \alpha \bigg{(} y - \la G_{s,x},m \ra \bigg{)} c_s \sigma' (  w_s\cdot x) x \pi(dx,dy) ds, \notag \\
G_{s,x} &=& c'_s \sigma (w'_s \cdot x), \notag \\
( c_0, w_0) &\sim& \bar \mu(0,c,w).
\label{SDEY}
\end{eqnarray}

For $m, m' \in M_T$ and $p\geq 1$ define the metric
\begin{eqnarray*}
D_{T,p}(m, m') = \inf  \bigg{\{} \bigg{(} \int_{C_T \times C_T} \sup_{s \leq T} \norm{ x_s - y_s }_{p}^{p}\wedge 1 d \nu (x, y)  \bigg{)}^{1/p}, \nu \in P(m, m') \bigg{\}},
\end{eqnarray*}
where $P(m, m')$ is the set of probability measures on $C_T \times C_T$ such that the marginal distributions are $m$ and $m'$, respectively.

Now we show  existence and uniqueness of a fixed point $\text{Law}( c_t,  w_t)$ for the mapping $H$, as defined via (\ref{SDEY}). If a solution to (\ref{Eq:RandomODE}) exists, then it must be a fixed point of $H$ (defined via equation (\ref{SDEY})). This is an immediate consequence of Lemma \ref{L:regularityLimitODE}. Therefore, if $H$ has a unique solution, there can be at most one solution to (\ref{Eq:RandomODE}). If (\ref{Eq:RandomODE}) has at most one solution, (\ref{EvolutionEquation1}) has at most one solution. Therefore, if $H$ has a unique fixed point, this proves uniqueness for (\ref{EvolutionEquation1}).

Due to Lemma \ref{L:regularityLimitODE} we need only to consider the space of measures $M_T$ that have bounded moments up to order $p=4$. By known results, see for example \cite{Bolley}, the space of measures with $p=4$ finite moments endowed with the $D_{T,4}$ metric is complete and separable. Due to closedness, the space of measures with bounded moments of order four is  a complete and separable metric space when endowed with the Wasserstein metric $D_{T,4}$. Therefore, in the arguments below we work with the space of measures that have bounded the first four moments and we consider the metric  $D_{T,4}$. We will show that there is a unique fixed point by proving a contraction.

Lemma \ref{L:MapToItself} shows that for a large enough bound, $H(m)$ maps from a subspace of bounded moments to the same subspace of bounded moments.
\begin{lemma}\label{L:MapToItself}
Consider $(c_{t}, w_{t})$ solving (\ref{SDEY}). There is a $K_{0}$ such that for any $K>K_{0}$, we can find $T_{1}=T_{1}(K)<T$ such that $\la \sup_{0 \leq t \leq T_1} ( |c_t|^4 + \norm{w_t}^4) , m \ra  < K$ implies $\mathbb{E}[ \sup_{0 \leq t \leq T_1} (|c_t|^4 + \norm{w_t}^4) ] < K$.
\end{lemma}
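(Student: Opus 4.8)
The plan is to establish the bound on $\mathbb{E}[\sup_{0\le t\le T_1}(|c_t|^4+\norm{w_t}^4)]$ by a direct Gronwall/continuation argument on the system \eqref{SDEY}, using the hypothesis $\la \sup_{0\le t\le T_1}(|c_t|^4+\norm{w_t}^4),m\ra < K$ to control the nonlocal term $\la G_{s,x},m\ra = \la c'_s\sigma(w'_s\cdot x),m\ra$. First I would note that since $\sigma$ is bounded, $|\la G_{s,x},m\ra| \le \norm{\sigma}_\infty\,\la |c'_s|,m\ra \le \norm{\sigma}_\infty K^{1/4}$, so the data-dependent forcing $y-\la G_{s,x},m\ra$ has fourth moment under $\pi$ bounded by a constant $C(K)$ depending only on $K$, $\norm{\sigma}_\infty$ and $\mathbb{E}|y|^4$ (finite by Assumption \ref{A:Assumption1}). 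This is the key point: the $m$-dependence enters only through a quantity already assumed bounded by $K$, so it does not feed back into a Gronwall loop.

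Next I would bound $|c_t|^4$. From the first line of \eqref{SDEY}, using $|\sigma|\le\norm{\sigma}_\infty$ and Jensen/Hölder in the time integral,
\begin{eqnarray*}
|c_t|^4 \le C\Big(|c_0|^4 + T_1^3\int_0^t\Big(\int_{\mathcal{X}\times\mathcal{Y}}\alpha|y-\la G_{s,x},m\ra|\,\norm{\sigma}_\infty\,\pi(dx,dy)\Big)^4 ds\Big) \le C\big(|c_0|^4 + T_1^4\,C(K)\big),
\end{eqnarray*}
so $\mathbb{E}\sup_{t\le T_1}|c_t|^4 \le C(\mathbb{E}|c_0|^4 + T_1^4 C(K))$, and $\mathbb{E}|c_0|^4<\infty$ by the exponential-moment hypothesis on $c_0$. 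Crucially the bound on $c_t$ is pathwise (no feedback), exactly as in Lemma \ref{L:regularityLimitODE}. Then for $w_t$, from the second line of \eqref{SDEY} and Hölder's inequality applied to the three factors $|y-\la G_{s,x},m\ra|$, $|c_s|$, $\norm{x}$ inside the $\pi$-integral, followed by Hölder in time, I get
\begin{eqnarray*}
\norm{w_t}^4 \le C\Big(\norm{w_0}^4 + T_1^3\int_0^t |c_s|^4\Big(\int|y-\la G_{s,x},m\ra|^2\pi\Big)^2\Big(\int\norm{x}^2\pi\Big)^2 ds\Big) \le C\Big(\norm{w_0}^4 + T_1^4 C(K)\,\sup_{s\le T_1}|c_s|^4\Big),
\end{eqnarray*}
and taking supremum and expectation, using the already-proved bound on $\mathbb{E}\sup|c_s|^4$ and $\mathbb{E}\norm{w_0}^4<\infty$, gives $\mathbb{E}\sup_{t\le T_1}\norm{w_t}^4 \le C(\mathbb{E}\norm{w_0}^4 + T_1^4 C(K))$. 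Adding the two bounds yields
\begin{eqnarray*}
\mathbb{E}\Big[\sup_{0\le t\le T_1}(|c_t|^4+\norm{w_t}^4)\Big] \le C_0 + T_1^4\,C_1(K),
\end{eqnarray*}
for constants $C_0$ (depending only on the initial moments, $\alpha$, $\norm{\sigma}_\infty$, and the data moments) and $C_1(K)$ increasing in $K$. Now I choose $K_0 = 2C_0$, and for any $K>K_0$ pick $T_1=T_1(K)\le T$ small enough that $T_1^4 C_1(K) < K/2$; then the right-hand side is $< C_0 + K/2 \le K/2 + K/2 = K$, which is the claim.

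The main obstacle — really the only subtlety — is making sure the estimate is genuinely one-directional: the moments of $(c_t,w_t)$ must be controlled by $K$ and the fixed input data, not by the moments of $Y=(c,w)$ being estimated, since otherwise Lemma \ref{L:MapToItself} would be circular. This is exactly why one uses $|\la G_{s,x},m\ra|\le \norm{\sigma}_\infty K^{1/4}$ and does \emph{not} expand $\la G_{s,x},m\ra$ in a way that reintroduces $\mathbb{E}\sup|c_t|^4$; the boundedness of $\sigma$ is what decouples things. A secondary point is keeping track of the powers of $T_1$ (the $T_1^3$ from Hölder in time plus one more power from the outer integral give $T_1^4$), which is what makes the time horizon adjustable; this is routine. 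Everything else is Hölder's inequality and the moment bounds already granted by Assumption \ref{A:Assumption1}.
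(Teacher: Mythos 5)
Your proposal is correct and follows essentially the same route as the paper: control the mean-field term via $|\la G_{s,x},m\ra|\le \norm{\sigma}_{\infty}\la |c'_s|,m\ra\le \norm{\sigma}_{\infty}K^{1/4}$, derive the a priori bounds $\mathbb{E}[\sup_{t\le T_1}|c_t|^4]\le C(1+T_1^4 K)$ and $\mathbb{E}[\sup_{t\le T_1}\norm{w_t}^4]\le C(1+T_1^4 K\,\mathbb{E}[\sup_{t\le T_1}|c_t|^4])$ exactly as in Lemma \ref{L:regularityLimitODE}, and then pick $K_0$ proportional to the additive constant and $T_1=T_1(K)$ small. Your closing remark that the boundedness of $\sigma$ is what keeps the estimate one-directional (avoiding circularity) is precisely the point the paper relies on.
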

\begin{proof}
\textcolor{black}{Assume that the measure $m$ is such that $\la \sup_{0 \leq t \leq T} ( |c_t|^4 + \norm{w_t}^4) , m \ra  < K$ (where $K<\infty$ will be chosen below). Using the same steps as in Lemma \ref{L:regularityLimitODE}, we can show that for some $T_{1}<T$ (to be chosen later),}
\begin{align}
\mathbb{E} [ \sup_{0 \leq t \leq T_1}  | c_t |^4 ] &\leq C_{1} \big{(} 1 + T_1^{4} K \big{)}, \notag \\
\mathbb{E} [ \sup_{0 \leq t \leq T_1}  \norm{ w_t }^4 ] &\leq C_{1} \big{(} 1 + T_1^{4} K \mathbb{E} [ \sup_{0 \leq t \leq T_{1}}  | c_t |^4 ]  \big{)}.\notag
\end{align}

Let $K_{0}>2 C_{1}$ and let $K>K_{0}$. Then, if $T_1 \leq \left(\frac{K - 2 C_{1}}{2 C_{1} K}\right)^{1/4}$, $\mathbb{E} [ \sup_{0 \leq t \leq T_1}  | c_t |^4 ]  \leq \frac{K}{2}$.  If, in addition,  we have $T_1 \leq \left(\frac{K - 2 C_{1}}{2 C_{1} K^2}\right)^{1/4}$, then we get $\mathbb{E} [ \sup_{0 \leq t \leq T_1}  \norm{ w_t }^4 ]  \leq \frac{K_1}{2}$. Therefore, if
\begin{eqnarray}
T_1 \leq \min\left\{ \left(\frac{K - 2 C_{1}}{2 C_{1} K}\right)^{1/4}, \left(\frac{K - 2 C_{1}}{2 C_{1} K^2}\right)^{1/4} \right\},\notag
\end{eqnarray}
we have that  $\mathbb{E}[ \sup_{0 \leq t \leq T_1} (|c_t|^4 + \norm{w_t}^4) ] < K$, concluding the proof of the lemma.

\end{proof}

We can now prove a contraction and then apply the Banach fixed-point theorem to prove that there is a unique fixed point.

For two elements $m^{1},m^{2}\in M_{T}$, let us set $\text{Law}(Y^{i}_{\cdot})=\text{Law}((c^{i}_{\cdot},w^{i}_{\cdot}))=H(m^{i}_{\cdot})$ for $t\in[0,T]$ with $i=1,2$. So, let $(c_t^1, w_t^1)$ satisfying (\ref{SDEY}) with $m = m^1$ and $(c_t^2, w_t^2)$ satisfying (\ref{SDEY}) with $m = m^2$. The processes $(c_t^1, w_t^1)$ and $(c_t^2, w_t^2)$ have the same initial conditions. That is,
\begin{eqnarray*}
(c_0^1, w_0^1) &=& (c_0^2, w_0^2) = (c_0, w_0), \notag \\
(c_0, w_0) &\sim& \bar \mu(0,dc,dw).
\end{eqnarray*}

We now prove a contraction for the mapping $H$ for some $0 < T_0 < T$. By definition, $(c_t^1, w_t^1)$ and $(c_t^2, w_t^2)$ have marginal distributions $H(m^1)$ and $H(m^2)$, respectively, on the time interval $[0, T_0]$. Once this is proven, we can extend this to the entire interval $[0,T]$ since $T_{0}$ is not affected by the input measures $m^{1},m^{2}$ or by which subinterval of $[0,T]$ we are considering. We have the following lemma.

\begin{lemma} \label{Contraction1}
Let $m^{1},m^{2}\in M_{T}$ and $T<\infty$. Then, there exist constants $C_1, C_2 <\infty$ that may depend on $T$ such that
\[
D_{t,4}^4(H(m^{1}), H(m^{2})) \leq C_2  t^{4} D_{t,4}^4(m^1, m^2) \bigg{(}\mathbb{E}  e^{    C_1 t |c_0| } \bigg{)} ^{1/2},
\]
for any $0 < t < T$. In addition, if $\bar \mu(0,dc,dw)$ has compact support, there exists a constant $C < \infty$ that may depend on $T$ such that
\[
D_{t,4}^4(H(m^{1}), H(m^{2})) \leq C  t^{4} D_{t,4}^4(m^1, m^2).
\]
\end{lemma}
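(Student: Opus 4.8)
The plan is to realize both distances through explicit couplings and then run a random Grönwall argument on the pathwise difference of the two limiting flows. Fix an $\varepsilon$-optimal coupling $\nu$ of $m^1$ and $m^2$ for the metric $D_{t,4}$, and let $(c^1_\cdot,w^1_\cdot)$ and $(c^2_\cdot,w^2_\cdot)$ solve (\ref{SDEY}) with $m=m^1$ and $m=m^2$ respectively, driven by the \emph{common} initial condition $(c_0,w_0)\sim\bar\mu(0,dc,dw)$. Then $\text{Law}(c^i_\cdot,w^i_\cdot)=H(m^i)$, so the pair $\big((c^1_\cdot,w^1_\cdot),(c^2_\cdot,w^2_\cdot)\big)$ is a coupling of $H(m^1)$ and $H(m^2)$; writing $\Delta_t:=\sup_{s\le t}\big(|c^1_s-c^2_s|+\norm{w^1_s-w^2_s}\big)$ and using $a^4+b^4\le(a+b)^4$ together with the norm equivalence on $\mathbb{R}^{1+d}$, one gets $D_{t,4}^4(H(m^1),H(m^2))\le\mathbb{E}[\Delta_t^4\wedge1]\le\mathbb{E}[\Delta_t^4]$, the expectation being over $(c_0,w_0)$. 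So it suffices to bound $\mathbb{E}[\Delta_t^4]$.

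Next I would subtract the two copies of (\ref{SDEY}). Writing $Q(m,s,x):=\la c'_s\sigma(w'_s\cdot x),m\ra$, split the $c$-integrand into $(y-Q(m^1,s,x))[\sigma(w^1_s\cdot x)-\sigma(w^2_s\cdot x)]$ plus $[Q(m^2,s,x)-Q(m^1,s,x)]\sigma(w^2_s\cdot x)$, and split the $w$-integrand around $c^1_s\sigma'(w^1_s\cdot x)-c^2_s\sigma'(w^2_s\cdot x)$. Since $\sigma\in C^2_b$, the ``particle'' increments are bounded by $C|c^1_s-c^2_s|$, $C\norm{x}\,\norm{w^1_s-w^2_s}$, and --- the one place an unbounded prefactor appears --- $C|c^2_s|\,\norm{x}\,\norm{w^1_s-w^2_s}$ in the $w$-equation. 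For the ``measure'' increments, represent $Q(m^1,s,x)-Q(m^2,s,x)$ as an integral against $\nu$, bound the integrand again via $\sigma\in C^2_b$, and apply Hölder in $\nu$ together with the fourth-moment bound on $m^2$ from Lemma \ref{L:MapToItself}; the truncation $\wedge1$ in $D_{t,4}$ is recovered by splitting the $\nu$-integral on $\{\sup_{u\le t}\norm{\omega^1_u-\omega^2_u}>1\}$, whose $\nu$-measure is $\le D_{t,4}^4(m^1,m^2)+\varepsilon$ by Markov's inequality. This yields $\sup_{u\le t}|Q(m^1,u,x)-Q(m^2,u,x)|\le C(1+\norm{x})\,D_{t,4}(m^1,m^2)$ with $C$ depending on the fourth moments. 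Integrating the dummy $(x,y)$ against $\pi$ using Assumption \ref{A:Assumption1}, and invoking the a priori bound $\sup_{s\le t}|c^2_s|\le|c_0|+Ct$ (obtained for (\ref{SDEY}) exactly as in Lemmas \ref{L:regularityLimitODE} and \ref{L:MapToItself}), I arrive at the random Grönwall inequality
\[
\Delta_t\le C\int_0^t(1+|c^2_s|)\,\Delta_s\,ds+C\,t\,(1+|c_0|)\,D_{t,4}(m^1,m^2).
\]

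Grönwall then gives $\Delta_t\le C\,t\,(1+|c_0|)\,D_{t,4}(m^1,m^2)\,\exp\!\big(C\!\int_0^t(1+|c^2_s|)ds\big)\le C\,t\,(1+|c_0|)\,D_{t,4}(m^1,m^2)\,e^{C_1t|c_0|}$ for $t\le T$. Raising to the fourth power, taking $\mathbb{E}$ over $(c_0,w_0)$, and using Cauchy--Schwarz to separate the polynomial factor $(1+|c_0|)^4$ (which has finite expectation, since $c_0$ has an exponential moment, hence all moments) from $e^{4C_1t|c_0|}$, one obtains, after relabelling $C_1$ and letting $\varepsilon\downarrow0$, $D_{t,4}^4(H(m^1),H(m^2))\le C_2\,t^4\,D_{t,4}^4(m^1,m^2)\,(\mathbb{E}\,e^{C_1t|c_0|})^{1/2}$, which is the first claim; here $\mathbb{E}\,e^{C_1t|c_0|}<\infty$ at least while $C_1t$ stays below the constant $q$ of Assumption \ref{A:Assumption1}. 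If $\bar\mu(0,dc,dw)$ has compact support, then $|c_0|\le R$ a.s., so both $\mathbb{E}(1+|c_0|)^4$ and $\mathbb{E}\,e^{C_1t|c_0|}\le e^{C_1TR}$ are absolute constants and the second, cleaner bound follows by absorbing them into $C$.

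\textbf{Main obstacle.} The crux is that the limiting dynamics are not globally Lipschitz in $(c,w)$: differentiating $c\,\sigma'(w\cdot x)$ with respect to $w$ forces the Grönwall coefficient in the $w$-equation to grow like $|c_s|$, which is controlled only by $|c_0|+Ct$ with $c_0$ possessing merely an exponential (not a uniform) bound; this is exactly what produces the factor $(\mathbb{E}\,e^{C_1t|c_0|})^{1/2}$ and restricts the contraction to a short horizon. A secondary technical point is reconciling the truncation $\wedge1$ in $D_{t,4}$ (needed for completeness of the underlying metric space) with the untruncated fourth moments that the Grönwall argument produces, which is dealt with by the Markov-inequality splitting of the $\nu$-integral described above.
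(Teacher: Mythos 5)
Your proposal is correct and follows essentially the same route as the paper: couple $H(m^1)$ and $H(m^2)$ through a common initial condition, decompose the difference of the two ODE flows, control the measure term $\la G_{s,x},m^2-m^1\ra$ via an (optimal) coupling of $m^1,m^2$ together with fourth-moment bounds to reconcile the $\wedge 1$ truncation, run a random Gr\"onwall argument with kernel $C(1+|c_s|)$, bound $|c_s|\le |c_0|+Ct$, and finish with a fourth power, expectation, and Cauchy--Schwarz to separate the polynomial factor from $e^{C_1 t|c_0|}$. The only cosmetic difference is that you handle the truncation by a Markov-inequality splitting of the coupling integral, whereas the paper's Appendix A uses an indicator decomposition on $\{Z_s\le 1\}$ followed by Cauchy--Schwarz; both yield the same bound $C\,D_{s,4}(m^1,m^2)$.
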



\begin{proof}
Using the formula (\ref{SDEY}) we obtain
\begin{align}
 c_t^1 - c_t^2  &=  \int_0^t  \int_{\mathcal{X}\times\mathcal{Y}}   \alpha \bigg{(} y - \la G_{s,x},m^{1}\ra \bigg{)}  \sigma ( w_s^1 \cdot x) \pi(dx,dy) ds \nonumber\\
 &\quad -\int_0^t \int_{\mathcal{X}\times\mathcal{Y}}   \alpha \bigg{(} y - \la G_{s,x},m^{2}\ra \bigg{)}  \sigma ( w_s^2 \cdot x) \pi(dx,dy) ds, \notag \\
  &=  \int_0^t  \int_{\mathcal{X}\times\mathcal{Y}}\alpha y \bigg{(}  \sigma (w_s^1 \cdot x) -  \sigma ( w_s^2 \cdot x) \bigg{)} \pi(dx,dy) ds  \nonumber\\
  &\quad +    \int_0^t  \int_{\mathcal{X}\times\mathcal{Y}}\alpha  \la G_{s,x},m^{2}\ra  \left(\sigma (w_s^2 \cdot x)-\sigma (w_s^1 \cdot x)\right)\pi(dx,dy) ds   \notag \\
 & \quad +  \int_0^t  \int_{\mathcal{X}\times\mathcal{Y}} \alpha \la G_{s,x},m^{2}-m^{1}\ra  \sigma ( w_s^1 \cdot x) \pi(dx,dy) ds \notag
\end{align}

First, let's address the mean-field term. Recall that $\sigma'(\cdot)$ is bounded and that $\pi(dx,dy)$ has bounded marginal moments via Assumption \ref{A:Assumption1}. Therefore, for $0 \leq t \leq T$,
\begin{eqnarray*}
&\phantom{.}& \bigg{|} \int_0^t  \int_{\mathcal{X}\times\mathcal{Y}}   \la c'_s \sigma (w'_s x), m^{2}\ra \bigg{(}  \sigma (w_s^2 \cdot x) -  \sigma ( w_s^1\cdot x) \bigg{)}\pi(dx,dy)  ds \bigg{|} \notag \\
&\leq& C  \int_0^t  \int_{\mathcal{X}\times\mathcal{Y}} \big{|}  \la c'_s \sigma (w'_s x), m^{2}\ra \big{|}  \parallel x \parallel  \pi(dx,dy) \parallel w_s^2 -  w_s^1 \parallel  ds   \notag \\
&\leq& C   \int_0^t     \la | c'_s | , m^{2}\ra   \parallel w_s^2 -  w_s^1 \parallel   \left(\int_{\mathcal{X}\times\mathcal{Y}}   \parallel x \parallel \pi(dx,dy)\right) ds  
\leq C \int_0^t \parallel w_s^2 -  w_s^1 \parallel ds.
\end{eqnarray*}

We next bound the term $\bigg{|} \int_0^t   \int_{\mathcal{X}\times\mathcal{Y}}  \la c'_s \sigma (w'_s x) ,m^{2}-m^{1} \ra \sigma ( w_s^1\cdot x) \pi(dx,dy) ds \bigg{|}.$
We have that
\begin{eqnarray*}
| c^2 \sigma (w^2 \cdot x)   -  c^1 \sigma (w^1\cdot  x)  |   &=&  \big{|} (c^2 - c^1)  \sigma (w^2 \cdot x)  + c^1 \big{(} \sigma( w^2 \cdot x) - \sigma(w^1\cdot x ) \big{)}  \big{|} \notag \\
 &\leq& C_1 |c^2 - c^1|   + C_2 \parallel x \parallel  |c^1|  \parallel w^2 - w^1  \parallel.
\end{eqnarray*}

Let the random variables $(c_s^{2,'}, w_s^{2,'})$ have marginal distribution $m^2$ and $(c_s^{1,'}, w_s^{1,'})$ have marginal distribution $m^1$. Then, for $0 \leq s  \leq T$,
\begin{eqnarray}
&\phantom{.}& \bigg{|} \int_0^t  \int_{\mathcal{X}\times\mathcal{Y}}  \la c'_s \sigma (w'_s x) , m^{2}-m^{1}\ra   \sigma ( w_s^1\cdot x) \pi(dx,dy) ds \bigg{|} 
 \notag \\
&\leq& C  \int_0^t  \int_{\mathcal{X}\times\mathcal{Y}} \mathbb{E} \bigg{[}  \bigg{|} c_s^{2,'}  \sigma (w_s^{2,'} \cdot x )  - c_s^{1,'}  \sigma (w_s^{1,'} \cdot x )  \bigg{|} \bigg{]}   \pi(dx,dy) ds \notag \\
&\leq& C  \int_0^t  \int_{\mathcal{X}\times\mathcal{Y}} \mathbb{E} \bigg{[}  |c_s^{2,'} - c_s^{1,'} |   + \parallel x \parallel  |c^{1,'}_s|  \parallel w^{2,'}_s - w^{1,'}_s  \parallel \bigg{]}   \pi(dx,dy) ds, \notag \\
&\leq& C  \int_0^t  \mathbb{E} \bigg{[}  |c_s^{2,'} - c_s^{1,'} |   +  |c^{1,'}_s|  \parallel w^{2,'}_s - w^{1,'}_s  \parallel \bigg{]}  ds, \nonumber\\
 &\leq& C  \int_0^t  \mathbb{E} \bigg{[} (1 + |c^{1,'}_s| ) \big{(}  |c_s^{2,'} - c_s^{1,'} |   +    \parallel w^{2,'}_s - w^{1,'}_s  \parallel \big{)} \bigg{]}  ds,
\label{BoundUniqueProof0000AABB}
\end{eqnarray}
where again Assumption \ref{A:Assumption1} was used for the moment bounds of $\pi(dx,dy)$. The inequality holds for any joint distribution $\gamma(m^1, m^2)$ of $m^1$ and $m^2$. Then, the auxiliary calculations provided in  Appendix \ref{AdditionalTechnicalDetailsAppendixB} show that (\ref{BoundUniqueProof0000AABB}) can be bounded in terms of $D_{s,4}(m^1, m^2)$:
\begin{eqnarray}
 \bigg{|} \int_0^t  \int_{\mathcal{X}\times\mathcal{Y}}  \la c'_s \sigma (w'_s x) , m^{2}-m^{1}\ra   \sigma ( w_s^1\cdot x) \pi(dx,dy) ds \bigg{|} \leq C  \int_0^t D_{s,4}(m^1, m^2) ds.\label{BoundUniqueProof0000AABB2}
\end{eqnarray}

Thus, we overall get that there is a constant $C<\infty$ such that
\begin{eqnarray}
| c_t^2 - c_t^1 | \leq C  \int_0^t    \bigg{[} \parallel w_s^2 - w_s^1 \parallel +  D_{s,4}(m^1, m^2)  \bigg{]} \nonumber ds.
\end{eqnarray}

Similar calculations also give the necessary bound for $\parallel w^{1}_{t}-w^{2}_{t} \parallel$. For completeness, the details are provided in Appendix \ref{AdditionalTechnicalDetails}.
\begin{eqnarray}
\parallel w_t^2 - w_t^1 \parallel \leq C  \int_0^t  \bigg{[}  \big{|} c_s^2 - c_s^1 \big{|}   + |c_s^1|   \parallel w_s^2 - w_s^1 \parallel +  D_{s,4}(m^1, m^2) | c_s^2|  \bigg{]} ds.\label{Eq:BoundForWt}
\end{eqnarray}

Hence, for $0 \leq s \leq T$, we have the bound

\begin{align}
| c_s^2 - c_s^1  | + \parallel w_s^2 - w_s^1  \parallel   &\leq C_1 \int_0^s (1 + |c_u^1| ) \bigg{[}   | c_u^2 - c_u^1 |  +  \parallel w_u^2 -  w_u^1 \parallel  \bigg{]} du  +  C_2   \int_{0}^{s} (1 + |c_u^2| )D_{u,2}(m^1, m^2) du.  \notag 
\end{align}

Setting for notational convenience $Z_s=| c_s^2 - c_s^1  | + \parallel w_s^2 - w_s^1  \parallel$, the latter relation gives
\begin{align}
\sup_{s\leq t} Z_s   &\leq C_1 \int_0^t (1 + |c_u^1| ) \sup_{r\leq u}Z_r du  +  C_2   \int_{0}^{t} (1 + |c_u^2| )D_{u,4}(m^1, m^2) du,  \notag \end{align}
which by Gronwall lemma gives
\begin{align}
\sup_{s\leq t}Z_s & \leq  C_2 \bigg{(}   \int_{0}^{t} (1 + |c_u^2| ) D_{u,4}(m^1, m^2) du \bigg{)} \exp  \bigg{(}  C_1\int_0^t   (1 + |c_u^1| ) du \bigg{)}\nonumber\\
& \leq  C_2 D_{t,4}(m^1, m^2) \bigg{(}   \int_{0}^{t} (1 + |c_u^2| )  du \bigg{)} \exp  \bigg{(}  C_1\int_0^t   (1 + |c_u^1| ) du \bigg{)}\nonumber
\end{align}
where we used the fact that $u\mapsto D_{u,2}(m^1, m^2)$ is monotonically increasing.

Now, note that $|c_s^1|$ can be bounded in terms of $|c_0^1|$, the initial condition. In particular, using  the boundedness of $\sigma(\cdot)$, the bound on $|  \la G_{s,x},m^1 \ra |$, the moments bounds for the distribution $\pi(dx,dy)$, and the fact that $0 \leq t \leq T < \infty$, we get for some constant $C<\infty$ that changes from line to line
\begin{eqnarray}
|c_t^1 | &\leq& |c_0^1 | +  C   \int_0^t \int_{\mathcal{X}\times\mathcal{Y}}   \left| y - \la G_{s,x},m^1 \ra \right| \left| \sigma ( w_s^1 \cdot x)\right| \pi(dx,dy) ds  \notag \\
&\leq&  |c_0^1| +  C.\nonumber
\end{eqnarray}

This bound holds for any $0 \leq t \leq T$.  Then,
\begin{align}
\sup_{s\leq t}Z_s
& \leq  C_2 D_{t,4}(m^1, m^2) t (1 + |c_0^2| )   e^{  t C_1  (1 + |c_0^1| ) }\nonumber
\end{align}

Raising this to the power four gives for some constants $C_1,C_2<\infty$ different than before
\begin{align}
\sup_{s\leq t}Z_s^{4}
& \leq  C_2 D^{4}_{t,4}(m^1, m^2) t^{4} (1 + |c_0^2|^{4} )   e^{  t C_1  (1 + |c_0^1| ) }\nonumber
\end{align}

Now notice that
\begin{align}
\sup_{ s \leq t} \big{[} | c_s^1 - c_s^2  |^4 + \parallel w_s^1 - w_s^2  \parallel_4^4 \big{]} \wedge 1  &\leq \sup_{ s \leq t} \big{[} | c_s^1 - c_s^2  |^4 + \parallel w_s^1 - w_s^2  \parallel_4^4 \big{]}\nonumber\\
&\leq
\sup_{ s \leq t} \big{[} | c_s^1 - c_s^2  |^{4} + \parallel w_s^1 - w_s^2  \parallel_1^4 \big{]} \nonumber\\
&\leq \sup_{ s \leq t} Z_s^{4}.\nonumber
 \end{align}

Combining the last two displays yields
\begin{align}
\sup_{s\leq t}\big{[} | c_s^1 - c_s^2  |^4 + \parallel w_s^1 - w_s^2  \parallel_4^4 \big{]} \wedge 1
& \leq  C_2 D^{4}_{t,4}(m^1, m^2) t^{4} (1 + |c_0^2|^{4} )   e^{  t C_1  (1 + |c_0^1| ) }\nonumber
\end{align}

Next, we take expectation and apply the Cauchy-Schwartz inequality on the right hand side. We obtain
\begin{align}
\mathbb{E}\sup_{s\leq t}\big{[} | c_s^1 - c_s^2  |^4 + \parallel w_s^1 - w_s^2  \parallel_4^4 \big{]} \wedge 1
& \leq  C_2 D^{4}_{t,4}(m^1, m^2) t^{4} \mathbb{E}\left[(1 + |c_0^2|^{4} )   e^{  t C_1  (1 + |c_0^1| ) }\right]\nonumber\\
& \leq  C_2 D^{4}_{t,4}(m^1, m^2) t^{4} \left(\mathbb{E}(1 + |c_0^2|^{8} )\right)^{1/2}  \left(\mathbb{E} e^{  t 2 C_1  (1 + |c_0^1| ) }\right)^{1/2}.\nonumber
\end{align}

The latter concludes the proof due to Assumption \ref{A:Assumption1}.

\end{proof}

By Assumption \ref{A:Assumption1} we have that there exists a $0 < q < \infty$ such that the moment generating function exists, i.e.
\begin{eqnarray}
 M_C(q) \vcentcolon = \mathbb{E} \bigg{[} \exp  \bigg{(}  q |c_0 | \bigg{)} \bigg{]} < C_M.\nonumber
\end{eqnarray}

Lemma \ref{Contraction1} immediately proves there is a contraction on the interval $[0, T_0]$. Indeed,
\begin{eqnarray}
D_{t,4}^{4}(H(m^{1}), H(m^{2})) &\leq& C_2  t^{4} D_{t,4}^{4}(m^1, m^2) \bigg{[}\mathbb{E}  \exp  \bigg{(}  C_1 t |c_0^1|  \bigg{)} \bigg{]}^{1/2}.\nonumber
\end{eqnarray}
Therefore,
\begin{eqnarray}
D_{t,4}(H(m^{1}), H(m^{2})) \leq C_2^{1/4} t D_{t,4}( m^{1}, m^{2}) \bigg{[}\mathbb{E}  \exp  \bigg{(}  C_1 t |c_0^1|  \bigg{)} \bigg{]}^{1/8}.\nonumber
\end{eqnarray}

Then, choose $T_0$ such that $C_2^{1/4}  C_M^{1/8} T_0  < 1$, $C_1 T_0 < q$,and $T_0 \leq T_1$, where $T_{1}$ is from Lemma \ref{L:MapToItself}.   That is, we choose
$T_0 < \min\left\{ \frac{1}{C_2^{1/4} C_M^{1/8}}, \frac{q}{C_1}, T_1 \right\}.$

If $T_0 < \min\left\{ \frac{1}{C_2^{1/4} C_M^{1/8}}, \frac{q}{C_1}, T_1 \right\}$, then $D_{T_0, 4}(H(m^{1}), H(m^{2})) \leq k  D_{T_0,4}(m^1, m^2)$ for $k < 1$ and we have proven uniqueness on the sub-interval $[0, T_0]$ (via the Banach fixed-point theorem).

In fact, this directly proves our next Lemma \ref{Contraction2} regarding uniqueness of the limit point over the entire interval $[0,T]$.

\begin{lemma} \label{Contraction2}
Let $T<\infty$. The mapping $H_{T}=(F\circ F)_{T}$ has a unique fixed point.
\end{lemma}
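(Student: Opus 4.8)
The plan is to bootstrap the local contraction already established on $[0,T_0]$ into a global uniqueness statement on $[0,T]$ by a standard concatenation argument, exploiting the crucial observation (already noted in the text) that the length $T_0$ of the contraction interval depends only on the structural constants $C_1, C_2, C_M, q$ and on the bound $K$ from Lemma \ref{L:MapToItself}, but \emph{not} on the particular input measures $m^1, m^2$ nor on which subinterval of $[0,T]$ we sit in. Concretely, I would first fix $K > K_0$ and then $T_1 = T_1(K)$ from Lemma \ref{L:MapToItself}, and set $T_0 < \min\{ (C_2^{1/4} C_M^{1/8})^{-1}, q/C_1, T_1\}$ as above, so that $H$ is a $k$-contraction ($k<1$) on $M_{T_0}$ restricted to the ball of measures with fourth moments bounded by $K$; by Lemma \ref{L:MapToItself} this ball is mapped into itself, and by the completeness of $(M_{T_0}, D_{T_0,4})$ on this closed ball (Bolley, cited above) the Banach fixed-point theorem gives a unique fixed point of $H$ on $[0,T_0]$.

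Next I would partition $[0,T]$ into finitely many subintervals $[0,T_0], [T_0, 2T_0], \ldots$ of length at most $T_0$, say $[t_{j-1}, t_j]$ for $j = 1, \ldots, m$ with $t_0 = 0$, $t_m = T$, and argue inductively. Suppose the fixed point of $H$ (equivalently, the solution to \eqref{Eq:RandomODE}, hence to \eqref{EvolutionEquation1}) has been shown to be unique on $[0, t_{j-1}]$, so that $\bar\mu_{t_{j-1}}$ and the law of $(c_{t_{j-1}}, w_{t_{j-1}})$ are determined. On $[t_{j-1}, t_j]$ the random ODE \eqref{Eq:RandomODE} is again of the same form, but now with (determined) initial data at time $t_{j-1}$ and with the same structural constants; the \emph{a priori} bounds of Lemma \ref{L:regularityLimitODE} (applied on the whole interval $[0,T]$) guarantee that the relevant fourth moments stay below a fixed $K$, so the same contraction estimate from Lemma \ref{Contraction1} applies verbatim, shifted in time, and yields a unique fixed point on $[t_{j-1}, t_j]$. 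Concatenating, the fixed point is unique on $[0, t_j]$; after $m$ steps we conclude uniqueness on $[0,T]$. Here one should note that the moment-generating-function condition $C_1 T_0 < q$ is used only within each step to control the factor $(\mathbb{E} e^{C_1 t |c_0^1|})^{1/8}$, and since at the start of each subinterval the relevant initial condition has finite exponential moments of the required order (this propagates from Lemma \ref{L:regularityLimitODE}, which gives only polynomial moment bounds on $c_t$, combined with the fact that $|c_t^1| \le |c_0^1| + C$ so $\mathbb{E} e^{C_1 t |c_{t_{j-1}}^1|} \le e^{C_1 t C}\mathbb{E} e^{C_1 t |c_0^1|} < \infty$ for $t \le T_0$ small enough that $C_1 T_0 < q$), the argument closes.

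The main obstacle — and the only genuinely nontrivial point beyond bookkeeping — is to verify that the contraction constant $T_0$ can indeed be chosen uniformly over all subintervals, i.e.\ that the constants $C_1, C_2$ in Lemma \ref{Contraction1} and the threshold $K_0$ in Lemma \ref{L:MapToItself} do not degrade as we move the starting time forward. This is where the global \emph{a priori} bounds of Lemma \ref{L:regularityLimitODE} do the work: they bound $\sup_{t\in[0,T]}|c_t|^p$ and $\mathbb{E}\sup_{t\in[0,T]}\|w_t\|^p$ by constants depending only on $T$, $p$, and the initial moments, so the relevant moments of the solution at any intermediate time $t_{j-1}$ are controlled by the same constant, and the exponential-moment bound on $|c_0|$ is inherited (via $|c_{t_{j-1}}| \le |c_0| + C$) as noted. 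Once this uniformity is in hand, the concatenation is routine. I would also remark that the equivalences established before Lemma \ref{L:MapToItself} — a weak measure-valued solution of \eqref{EvolutionEquation1} is a fixed point of $H$, which corresponds to a solution of \eqref{Eq:RandomODE} — let us transfer uniqueness for $H$ on $[0,T]$ back to uniqueness for the evolution equation \eqref{EvolutionEquationIntroduction}, completing the proof of Lemma \ref{Contraction2}.
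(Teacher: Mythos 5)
Your proposal is correct and follows essentially the same route as the paper: apply Lemmas \ref{L:MapToItself} and \ref{Contraction1} together with the Banach fixed-point theorem to get a unique fixed point on $[0,T_0]$, then partition $[0,T]$ into subintervals of length $T_0$ and iterate, using the uniformity of $T_0$ across subintervals. Your additional care in checking that the exponential-moment condition on the initial data propagates to the start of each subinterval (via $|c_t^1|\le |c_0^1|+C$) fills in a detail the paper leaves implicit, but the overall argument is the same.
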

\begin{proof}
By Lemmas \ref{L:MapToItself}, \ref{Contraction1} and the Banach fixed-point theorem we readily obtain that there is $0<T_{0}<\infty$ such that $H_{T_{0}}(m)$ will be a contraction map leading to (\ref{SDEY}) having a unique solution on $[0, T_{0}]$. We then extend this construction to the whole interval $[0,T]$ by dividing the interval $[0,T]$ into sub-intervals $[0, T_0], [T_0, 2 T_0], \ldots, [T- T_0, T]$. In each sub-interval, it can be shown that the solution is unique by proving a contraction as was done in Lemma \ref{Contraction1}, which can be done as $T_{0}$ can be always taken to be of the same magnitude, i.e. it does not depend on which sub-interval is being examined. This concludes the proof.
\end{proof}

\section{Proof of the Main Results} \label{FinalProof}
We now collect the results to prove Theorem \ref{TheoremLLN}, Corollary \ref{CorollaryLLN}, and Theorem \ref{TheoremChaos}.

\begin{proof}[Proof of Theorem \ref{TheoremLLN}]
Let $\pi^N$ be the probability measure corresponding to $\mu^N$. Each $\pi^N$ takes values in the set of probability measures $\mathcal{M} \big{(} D_E([0,T]) \big{)}$. Relative compactness, proven in Section \ref{RelativeCompactness}, implies that every subsequence $\pi^{N_k}$ has a further sub-sequence $\pi^{N_{k_m}}$ which weakly converges. Section \ref{Identification} proves that any limit point $\pi$ of $\pi^{N_{k_m}}$ will satisfy the evolution equation (\ref{EvolutionEquationIntroduction}). Section \ref{Uniqueness} proves that the solution of the evolution equation (\ref{EvolutionEquationIntroduction}) is unique. Therefore, by Prokhorov's Theorem, $\pi^N$ weakly converges to $\pi$, where $\pi$ is the distribution of $\bar \mu$, the unique solution of (\ref{EvolutionEquationIntroduction}). That is, $\mu^N$ converges in distribution to $\bar \mu$.
\end{proof}

\begin{proof}[Proof of Corollary \ref{CorollaryLLN}]
The result follows from applying integration by parts to (\ref{EvolutionEquationIntroduction}) using the assumption that $ p(t,c,w) \rightarrow 0$ as $|c|, \parallel w\parallel \rightarrow \infty$. We also note that if a solution exists to (\ref{fpPDE}), then it is unique due to the uniqueness of (\ref{EvolutionEquationIntroduction}).
\end{proof}

\begin{proof}[Proof of Theorem \ref{TheoremChaos}]
By Theorem \ref{TheoremLLN} we have that the scaled empirical measure $\mu^N_{\cdot}$
converges in distribution in $D_{E}([0,T])$ towards a deterministic limit $\bar{\mu}_{\cdot}$ with the joint distribution of the weights $ ( c^i_k, w^i_k)_{i=1}^N \in  ( \mathbb{R}^{1+d} )^{\otimes N}$ being exchangeable. Then, by the Tanaka-Sznitman theorem (see for example Theorem 3.2 in \cite{Gottlieb} or \cite{Sznitman}) we get that $\rho^N$ will be $\bar{\mu}$-chaotic.
\end{proof}

\section{Conclusion} \label{Conclusion}

In this paper we develop a law of large numbers result for neural networks with a single hidden layer as the number of hidden units and stochastic gradient descent iterations grow. The limiting distribution of the parameters is rigorously shown to satisfy an explicitly stated first-order nonlinear deterministic PDE, in the form of a measure evolution equation. The limiting PDE is a function of the inputs to the model, such as the learning rate, activation function, and distribution of the observed data. A numerical study on the well-known MNIST dataset illustrates the theoretical results of this paper. In related work which builds upon the results in this paper, a central limit theorem has been proven for single-layer neural networks in \cite{CLTneuralnetwork} and a law of large numbers has been proven for deep neural networks in \cite{DNN}. 

\newpage

\appendix

\section{Proof of (\ref{BoundUniqueProof0000AABB2})}  \label{AdditionalTechnicalDetailsAppendixB}

We will show that (\ref{BoundUniqueProof0000AABB}) can be bounded in terms of $D_{s,4}(m^1, m^2)$. For notational convenience, define $Z_s \vcentcolon =  |c_s^{2,'} - c_s^{1,'} |   +    \parallel w^{2,'}_s - w^{1,'}_s  \parallel$.

\begin{eqnarray}
\mathbb{E} \big{[} (1 + |c^{1,'}_s| )  Z_s \big{]}   &=&  \mathbb{E} \big{[} (1 + |c^{1,'}_s| )  Z_s  (\mathbf{1}_{Z_s \leq 1} + \mathbf{1}_{Z > 1} ) \big{]} \notag \\
&=& \mathbb{E} \bigg{[} (1 + |c^{1,'}_s| )    \big{(}  (Z_s \wedge 1) \mathbf{1}_{Z_s \leq 1} + Z_s (Z_s \wedge 1) \mathbf{1}_{Z_s > 1} \big{)} \bigg{]} \notag \\
&=&  \mathbb{E} \bigg{[} (1 + |c^{1,'}_s| )    \big{(}   \mathbf{1}_{Z_s \leq 1} + Z_s  \mathbf{1}_{Z_s > 1} \big{)} (Z_s \wedge 1) \bigg{]} \notag \\
&\leq&  \mathbb{E} \bigg{[} (1 + |c^{1,'}_s| )   (   1 + Z_s  ) (Z_s \wedge 1) \bigg{]} \notag \\
&\leq&  \mathbb{E} \bigg{[} (1 + |c^{1,'}_s| )  (  1 + |c_s^{2,'} | + |c_s^{1,'} | + \parallel w^{1,'}_s \parallel + \parallel w^{2,'}_s \parallel  )  (Z_s \wedge 1) \bigg{]} \notag \\
&\leq& C \bigg{[}  \mathbb{E}  (1 +  |c^{1,'}_s|^4 +|c_s^{2,'} |^4 +  \parallel w^{1,'}_s \parallel^4 + \parallel w^{2,'}_s \parallel^4  ) \bigg{]}^{1/2} \bigg{[} \mathbb{E}  (Z_s \wedge 1)^2 \bigg{]}^{1/2} \notag \\
&\leq& C  \bigg{[} \mathbb{E} \left( Z_s^4 \wedge 1\right) \bigg{]}^{1/4} \notag \\
&\leq& C  \bigg{[} \mathbb{E} \left( \big{(}  |c_s^{2,'} - c_s^{1,'} |^4  +    \parallel w^{2,'}_s - w^{1,'}_s  \parallel^4_4  \big{)}\wedge 1 \right) \bigg{]}^{1/4}.\nonumber
\end{eqnarray}
The sixth line uses the Cauchy-Schwartz inequality and Young's inequality. The seventh line uses the facts that $m^1$ and $m^2$ have bounded fourth order moments. The eighth line uses Young's inequality. We have also used the facts that $(z \wedge 1)^4 = z^4 \wedge 1$ and $(K z) \wedge 1 \leq K (z \wedge 1)$ when $z \geq 0$ and $K \geq 1$.

Therefore,
\begin{eqnarray}
&\phantom{.}& \bigg{|} \int_0^t  \int_{\mathcal{X}\times\mathcal{Y}}  \la c'_s \sigma (w'_s x) , m^{2}-m^{1}\ra   \sigma ( w_s^1\cdot x) \pi(dx,dy) ds \bigg{|} \notag \\
&\leq& C \int_0^t  \bigg{[}\mathbb{E} \left( \big{(}  |c_s^{2,'} - c_s^{1,'} |^4  +    \parallel w^{2,'}_s - w^{1,'}_s  \parallel^4_4  \big{)}\wedge 1\right) \bigg{]}^{1/4} ds \notag \\
&\leq& C  \int_0^t  \bigg{[} \mathbb{E} \left( \sup_{u \leq s} \big{(}  |c_u^{2,'} - c_u^{1,'} |^4  +    \parallel w^{2,'}_u - w^{1,'}_u  \parallel^4_4  \big{)}\wedge 1 \right)\bigg{]}^{1/4} ds.\nonumber
\end{eqnarray}

Since this inequality holds for any joint distribution $\gamma(m^1, m^2)$, we have that (\ref{BoundUniqueProof0000AABB2}) holds.

\section{Proof of (\ref{Eq:BoundForWt})} \label{AdditionalTechnicalDetails}

By definition, we have
\begin{align}
w_t^1  - w_t^2  &=   \int_0^t   \int_{\mathcal{X}\times\mathcal{Y}} \alpha \bigg{(} y - \la G_{s,x},m^1 \ra \bigg{)} c_s^1 \sigma' (  w_s^1 \cdot x) x \pi(dx,dy) ds \notag \\
&\quad-  \int_0^t   \int_{\mathcal{X}\times\mathcal{Y}} \alpha \bigg{(} y - \la G_{s,x},m^2 \ra \bigg{)} c_s^2 \sigma' (  w_s^2 \cdot x) x \pi(dx,dy) ds \notag \\
&= \alpha  \int_0^t   \int_{\mathcal{X}\times\mathcal{Y}} x \bigg{[} y  \big{(}  c_s^1 \sigma' (  w_s^1 \cdot x)  - c_s^2 \sigma' (  w_s^2 \cdot x)    \big{)} + \big{(} \la G_{s,x},m^2 \ra - \la G_{s,x},m^1 \ra \big{)} c_s^2 \sigma' (  w_s^2 \cdot x)  \notag \\
&\quad+  \la G_{s,x},m^1 \ra  \big{(} c_s^2 \sigma' (  w_s^2 \cdot x)  - c_s^1 \sigma' (  w_s^1 \cdot x)  \big{)}  \bigg{]} \pi(dx,dy)  ds \notag \\
&= \alpha  \int_0^t   \int_{\mathcal{X}\times\mathcal{Y}}  x \bigg{[} y \big{(} c_s^1 - c_s^2 \big{)} \sigma' (  w_s^2 \cdot x)  + y  c_s^1 \big{(}     \sigma' (  w_s^1 \cdot x)   - \sigma' (  w_s^2 \cdot x)  \big{)} \notag \\
&\quad+   \big{(} \la G_{s,x},m^2 \ra - \la G_{s,x},m^1 \ra \big{)} c_s^2 \sigma' (  w_s^2 \cdot x) + \la G_{s,x},m^1 \ra  \big{(} c_s^2 \sigma' (  w_s^2 \cdot x)  - c_s^1 \sigma' (  w_s^1 \cdot x)  \big{)}  \bigg{]} \pi(dx,dy)  ds \notag \\
&= \alpha  \int_0^t   \int_{\mathcal{X}\times\mathcal{Y}}  x \bigg{[} y \big{(} c_s^1 - c_s^2  \big{)} \sigma' (  w_s^2 \cdot x)  + y  c_s^1 \big{(}   \sigma' (  w_s^1 \cdot x) -  \sigma' (  w_s^2 \cdot x)   \big{)} \notag \\
&\quad+   \big{(} \la G_{s,x},m^2 \ra - \la G_{s,x},m^1 \ra \big{)} c_s^2 \sigma' (  w_s^2 \cdot x) \notag \\
&\quad+ \la G_{s,x},m^1 \ra  \big{(} c_s^2 - c_s^1 \big{)} \sigma' (  w_s^2 \cdot x) +  \la G_{s,x},m^1 \ra  c_s^1  \big{(}\sigma' (  w_s^2 \cdot x) -  \sigma' (  w_s^1 \cdot x)  \big{)}  \bigg{]} \pi(dx,dy)  ds.\nonumber
\end{align}

Therefore, we have the bound
\begin{align}
\parallel w_t^2 - w_t^1 \parallel &\leq C  \int_0^t   \int_{\mathcal{X}\times\mathcal{Y}}  \parallel x \parallel  \bigg{[} |y| \big{|} c_s^2 - c_s^1 \big{|}   + |y|  |c_s^1| \big{|}   \sigma' (  w_s^2 \cdot x)  -  \sigma' (  w_s^1 \cdot x)  \big{|} \notag \\
&\quad+   \big{|} \la G_{s,x},m^2 \ra - \la G_{s,x},m^1 \ra \big{|} | c_s^2|  \notag \\
&\quad+ | \la G_{s,x},m^1 \ra |  \big{|} c_s^2 - c_s^1 \big{|} +   | \la G_{s,x},m^1 \ra |   | c_s^1 |  \big{|}\sigma' (  w_s^2 \cdot x) -  \sigma' (  w_s^1 \cdot x)  \big{|} \bigg{]} \pi(dx,dy) ds.\nonumber
\end{align}
Due to $m^1$ having bounded moments, $| \la G_{s,x},m^1 \ra | < C$ (see Section \ref{Uniqueness} for similar calculations).  Due to Assumption \ref{A:Assumption1} on $\sigma$,
\begin{align}
\big{|}   \sigma' (  w_s^2 \cdot x)  -  \sigma' (  w_s^1 \cdot x)  \big{|} &\leq C | w_s^2 \cdot x  - w_s^1 \cdot x | = C \big{|} \sum_{i=1}^d ( w_s^{2,i} x_i - w_s^{1,i} x_i ) \big{|} \notag \\
&\leq C   \sum_{i=1}^d |x_i| \big{|} w_s^{2,i}  - w_s^{1,i}  \big{|} \leq C  \sum_{i=1}^d \norm{x} \big{|} w_s^{2,i}  - w_s^{1,i}  \big{|} = C  \norm{x} \parallel w_s^2 - w_s^1 \parallel.\nonumber
\end{align}
Using these inequalities and the bounded moments of $\pi(dx,dy)$, we can calculate the upper bound
\begin{align}
\parallel w_t^2 - w_t^1 \parallel &\leq C  \int_0^t  \bigg{[}  \big{|} c_s^2 - c_s^1 \big{|}   + |c_s^1|   \parallel w_s^2 - w_s^1 \parallel +   \big{|} \la G_{s,x},m^2 \ra - \la G_{s,x},m^1 \ra \big{|} | c_s^2|  \bigg{]} ds.\nonumber
\end{align}

Using the same approach as in the bound for  (\ref{BoundUniqueProof0000AABB}), see Appendix \ref{AdditionalTechnicalDetailsAppendixB}, we have the bound
\begin{eqnarray}
 \big{|} \la G_{s,x},m^2 \ra - \la G_{s,x},m^1 \ra \big{|} | c_s^2|  \leq C   D_{s,4}(m^1, m^2) | c_s^2|.\nonumber
\end{eqnarray}

Therefore, we have obtained (\ref{Eq:BoundForWt}).

\end{document}